\newtheorem{thm}{Theorem}[section]
\newtheorem{theorem}[thm]{Theorem}
\newtheorem{corollary}[thm]{Corollary}
\newtheorem{lemma}[thm]{Lemma}
\theoremstyle{definition}
\newtheorem{rem}[thm]{Remark}
\numberwithin{equation}{section}
\begin{document}

\title[Stability Theory for Nullity and Deficiency of Linear Relations]
 {Stability Theory for Nullity and Deficiency of Linear Relations}

\author[Luliro]{Kito Silas Luliro}
\address{Department of Mathematics, Makerere University, P. O. Box 7062, Kampala, Uganda}
\email{ksilas@cedat.mak.ac.ug \& slkito2020@gmail.com}

\author[Wanjala]{Gerald Wanjala}
\address{Department of Mathematics, Sultan Qaboos University, P. O. Box 36, PC 123 Al Khoud, Sultanate of Oman}
\email{gwanjala@squ.edu.om \& wanjalag@yahoo.com}

\subjclass[2010]{Primary 47A06, 47A53, 47A55}

\keywords{Linear relation, nullity, deficiency, stability}
\footnote{This work was financially supported by the Swedish Sida Phase-IV
bilateral program with Makerere University, 2015-2020, project 316
Capacity building in Mathematics and its applications".}
\begin{abstract}
Let $\mathcal A$ and $\mathcal B$ be two closed linear relation acting between two Banach spaces $X$ and $Y$ and let $\lambda$ be a complex  number. We study the stability of the nullity and deficiency of $\mathcal A$ when it is perturbed by $\lambda\mathcal B$. In particular, we show the existence of a constant $\rho>0$ for which both the nullity and deficiency of $\mathcal A$ remain stable under perturbation by $\lambda\mathcal B$ for all $\lambda$ inside the disk $\vert \lambda\vert <\rho$.
\end{abstract}
\maketitle
\section{Introduction}

For purposes of introduction we shall consider  bounded linear operators $A$ and $B$ with domain $X$ and range in $Y$. As usual, let $N(A)$ and $R(A)$ denote the null space and range of $A$ respectively. The dimensions of $N(A)$ and $Y\slash R(A)$ are called the nullity and the
deficiency of $A$ respectively and denoted by $\alpha(A)$ and $\beta(A)$. It is well known that $\alpha(A)$ and $\beta(A)$ have some kind of stability when $A$ is subjected to some kind of perturbation (see for example ~\cite{kato2013perturbation}). More precisely, $\alpha(A)$ and $\beta(A)$ are unchanged when $A$ is perturbed by some bounded linear operator $B$ under certain prescribed conditions. This stability can be described in the form
$$
\alpha(A-B)- \beta(A-B)=\alpha(A)-\beta(A).
$$
Another convenient way of describing this stability is to put it in the form
$$
\alpha(A-B)=\alpha(A) \; \; {\rm and}\; \; \beta(A-B)=\beta(A).
$$
The stability concept described here is very useful in studying eigenvalue problems of the form $Ax=\lambda Bx$ and $A^*y=\lambda B^*y$, where $A^*$ denotes the adjoint operator.

This paper deals with the stability theory for nullity and deficiency of linear relations and it can be
seen as a generalization of the classical theory for the corresponding quantities for linear operators.
The theory and exposition developed here goes along the lines of the classical texts on the perturbation theory for linear operators (see for example ~\cite{kato2013perturbation} and ~\cite{kato1958perturbation}), but in a more general setting. Some stability theorems for multivalued linear operators or what we refer to here as linear relations, have been considered in ~\cite{cross1998multivalued} and more recently in ~\cite{ren2018stability}. In either of these cases, the perturbing multivalued linear operator $\mathcal B$ does not vary with the varying $\lambda$ as the case we consider here.
\section{Preliminaries}
\subsection{Relations on sets}
In this section we introduce some notation and consider some basic concepts concerning relations on sets. Let $U$ and $V$ be two nonempty sets. By  a \emph{relation} $\mathcal T$ from $U$ to $V$ we mean a mapping whose domain $D(\mathcal T)$ is a nonempty subset of $U$, and taking values in $2^{V}\setminus \emptyset$, the collection of all nonempty subsets of $V$. Such a mapping
$\mathcal T$ is also referred to as a \emph{multi-valued} operator or at times as a \emph{set valued function}. If $\mathcal T$ maps the elements of its domain to singletons, then $\mathcal T$ is said to be a \emph{single valued} mapping or operator. Let $\mathcal T$ be a relation from $U$ to $V$ and let $\mathcal T(u)$ denote the image of an element $u\in U$ under $\mathcal T$. If we define $\mathcal T(u) = \emptyset$ for $u\in U$ and $u\notin D(T)$ then the domain $D(\mathcal T)$ of $\mathcal T$ is given by
$$
D(\mathcal T) = \{ u\in U : \mathcal T(u) \ne \emptyset \}.
$$
Denote by $R(U, V)$ the class of all relations from $U$ to $V$.
If $\mathcal T$ belongs to $R(U, V)$, the graph of $\mathcal T$, which we denote by $G(\mathcal T)$ is the subset of $U \times V$ defined by
$$
G(\mathcal T)= \{ (u,v) \in U \times V : u\in D(\mathcal T), v\in \mathcal T(u)\}.
$$
A relation $\mathcal T\in R(U, V)$ is uniquely determined by its graph, and conversely any nonempty subset of $U\times V$ uniquely determines a relation $\mathcal T\in R(U, V)$.

For a relation $\mathcal T\in R(U, V)$ we define its inverse $\mathcal T^{-1}$ as the relation from $V$ to $U$ whose graph $G(\mathcal T^{-1})$ is given by
\begin{equation}\label{Inverse}
G(\mathcal T^{-1})= \{ (v, u) \in V \times U : (u, v) \in G(T)\}.
\end{equation}
Let $\mathcal T\in R(U, V)$. Given a subset $M$ of $U$, we define the image of $M$, $T(M)$ to be
$$
\mathcal T(M) = \bigcup \{ T(m) : m\in  M \cap D(T)\}.
$$
With this notation we define the range of $\mathcal T$ by
$$
R(\mathcal T): = \mathcal T(U)
$$

Let $N$ be a nonempty subset of $V$. The definition of $\mathcal T^{-1}$ given in \eqref{Inverse} above implies that
\begin{equation}\label{Inverse-set}
\mathcal T^{-1}(N)=\{ u\in D(\mathcal T):N\cap \mathcal T(u) \ne \emptyset\}.
\end{equation}
If in particular $v\in R(\mathcal T)$, then
$$
\mathcal T^{-1}(v) = \{ u\in D(\mathcal T): v\in \mathcal T(u)\}.
$$

For a detailed study of relations, we refer to ~\cite{arens1961operational}, ~\cite{coddington1978positive}, ~\cite{coddington1973extension}, ~\cite{azizov2013compressions}, ~\cite{cross1998multivalued}, and ~\cite{wanjala2015operator}.

\subsection{Linear Relations}
Let $X$ and $Y$ be Linear spaces over a field $\mathbb K = \mathbb R$ (or $\mathbb C$) and let $\mathcal T\in R(X,Y)$. We say that $\mathcal T$ is a \emph{linear relation} or a \emph{multi-valued linear operator} if for all $x, z \in D(\mathcal T)$ and any nonzero scalar $\upalpha$ we have
\begin{enumerate}
\item[(1)] \quad $T(x) + \mathcal T(z) = \mathcal T(x+z)$,
\item[(2)] \quad $\upalpha \mathcal T(x) = \mathcal T(\upalpha x)$.
\end{enumerate}
The equalities in (1) and (2) above are understood to be set equalities. These two conditions indirectly imply that the domain of a linear relation is a linear subspace. The class of linear relations in $R(X,Y)$ will be denoted by $LR(X,Y)$. If $X=Y$ then we denote $LR(X,X)$ by $LR(X)$. We say that $\mathcal T$ is a linear relation in $X$ if $\mathcal T\in LR(X)$. We shall use the term \emph{operator} to refer to a single valued linear operator
while a multi-value linear operator will be generally referred to as a \emph{linear relation}.

If $X$ and $Y$ are normed linear spaces, we say that $\mathcal T\in LR(X,Y)$ is closed if its graph $G(\mathcal T)$ is a closed subspace of $X\times Y$. The collection of all such $\mathcal T$ will be denoted by $CLR(X,Y)$.

We conclude this section with the following theorems which are taken from ~\cite{cross1998multivalued}.
\begin{theorem}\label{Theorem 4}
Let $\mathcal T\in R(X,Y)$. The following properties are equivalent.
\begin{itemize}
\item[(i)] $\mathcal T$ is a linear relation.
\item[(ii)] $G(\mathcal T)$ is a linear subspace of $X\times Y$.
\item[(iii)] $\mathcal T^{-1}$ is a linear relation.
\item[(iv)] $G(\mathcal T^{-1})$ is a linear subspace of $Y\times X$.
\end{itemize}
\end{theorem}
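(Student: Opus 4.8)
The plan is to prove the single equivalence (i) $\Leftrightarrow$ (ii) by hand and then obtain everything else for free from the symmetry between $\mathcal T$ and $\mathcal T^{-1}$, closing the cycle (i) $\Leftrightarrow$ (ii) $\Leftrightarrow$ (iv) $\Leftrightarrow$ (iii). The two graphs are related by the coordinate swap, so once (i) $\Leftrightarrow$ (ii) is in hand the remaining links require no new computation.

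First I would show (i) $\Rightarrow$ (ii). Assuming the set-identities (1) and (2), take $(x_1,y_1),(x_2,y_2)\in G(\mathcal T)$, i.e. $y_i\in\mathcal T(x_i)$. Closure under addition is immediate from (1), since $y_1+y_2\in\mathcal T(x_1)+\mathcal T(x_2)=\mathcal T(x_1+x_2)$, so $(x_1+x_2,y_1+y_2)\in G(\mathcal T)$; closure under multiplication by a nonzero scalar $\alpha$ is (2). For the scalar $0$ one only needs $(0,0)\in G(\mathcal T)$, and this follows by applying (2) with $\alpha=-1$ to get $(-x_1,-y_1)\in G(\mathcal T)$ and then adding, so $(0,0)=(x_1,y_1)+(-x_1,-y_1)\in G(\mathcal T)$. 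Hence $G(\mathcal T)$ is a linear subspace.

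Next, (ii) $\Rightarrow$ (i). Condition (2) is routine: for $\alpha\neq 0$, scaling a point of $G(\mathcal T)$ by $\alpha$ gives $\alpha\mathcal T(x)\subseteq\mathcal T(\alpha x)$, and scaling by $\alpha^{-1}$ gives the reverse inclusion. For (1), the inclusion $\mathcal T(x)+\mathcal T(z)\subseteq\mathcal T(x+z)$ is a direct consequence of additivity of the subspace $G(\mathcal T)$. I expect the reverse inclusion $\mathcal T(x+z)\subseteq\mathcal T(x)+\mathcal T(z)$ to be the main obstacle, since a given $w\in\mathcal T(x+z)$ carries no obvious splitting. The key observation is that $z\in D(\mathcal T)$ guarantees $\mathcal T(z)\neq\emptyset$, so we may fix a witness $y_2\in\mathcal T(z)$; then $(x+z,w)-(z,y_2)=(x,\,w-y_2)\in G(\mathcal T)$ because $G(\mathcal T)$ is a subspace, giving $w-y_2\in\mathcal T(x)$ and therefore the decomposition $w=(w-y_2)+y_2\in\mathcal T(x)+\mathcal T(z)$.

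Finally, the equivalences involving $\mathcal T^{-1}$ cost nothing extra. The coordinate-swap map $\sigma\colon X\times Y\to Y\times X$, $\sigma(u,v)=(v,u)$, is a linear isomorphism, and by \eqref{Inverse} we have $G(\mathcal T^{-1})=\sigma\big(G(\mathcal T)\big)$; since a linear isomorphism carries subspaces to subspaces bijectively, (ii) $\Leftrightarrow$ (iv) is immediate. Applying the already-established equivalence (i) $\Leftrightarrow$ (ii) to the relation $\mathcal T^{-1}\in R(Y,X)$ yields (iii) $\Leftrightarrow$ (iv), which closes the cycle and completes the proof.
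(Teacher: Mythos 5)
Your proposal is correct. Note that the paper itself gives no proof of this theorem; it is quoted verbatim from Cross's monograph on multivalued linear operators, so there is nothing internal to compare against. Your argument is the standard one: the only genuinely non-trivial step, the inclusion $\mathcal T(x+z)\subseteq \mathcal T(x)+\mathcal T(z)$ in (ii) $\Rightarrow$ (i), is handled correctly by fixing a witness $y_2\in\mathcal T(z)$ (which exists precisely because $z\in D(\mathcal T)$) and subtracting inside the subspace $G(\mathcal T)$, and the reduction of (iii) and (iv) to the coordinate-swap isomorphism $G(\mathcal T^{-1})=\sigma(G(\mathcal T))$ is exactly the economical way to close the cycle.
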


\begin{corollary}\label{corollary}
Let $\mathcal T \in R(X,Y)$.
\begin{itemize}
  \item[(i)]  Then $\mathcal T$ is a linear relation  if and only if
  $$
  \mathcal T(\upalpha x_1 + \upbeta x_2) = \upalpha \mathcal T(x_1) + \upbeta \mathcal T(x_2)
  $$
  holds for all $x_1, x_2 \in D(\mathcal T)$ and some nonzero scalars $\upalpha$ and $\upbeta$.
  \item[(ii)] If $\mathcal T$ is a linear relation then $\mathcal T(0)$ and $\mathcal T^{-1}(0)$ are linear subspaces.
\end{itemize}
\end{corollary}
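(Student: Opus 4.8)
The plan is to route both assertions through the graph characterisation of Theorem \ref{Theorem 4}: a relation is linear precisely when its graph is a linear subspace. This reduces everything to elementary subspace bookkeeping, once the correct closure properties of $G(\mathcal T)$ are in hand. I would treat part (i) as an equivalence and then read off part (ii) as a short corollary of the resulting subspace structure.

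For part (i), the forward implication is immediate. If $\mathcal T$ is a linear relation, then for any nonzero $\upalpha,\upbeta$ and any $x_1,x_2\in D(\mathcal T)$ I would first apply additivity (property (1)) to split $\mathcal T(\upalpha x_1+\upbeta x_2)=\mathcal T(\upalpha x_1)+\mathcal T(\upbeta x_2)$, and then apply homogeneity (property (2)) to each summand to obtain $\upalpha\mathcal T(x_1)+\upbeta\mathcal T(x_2)$, all equalities being of sets. In particular the displayed identity holds for some (indeed every) nonzero pair $\upalpha,\upbeta$, which is all the forward direction requires. For the converse I would argue at the level of the graph: the displayed set identity says exactly that whenever $v_1\in\mathcal T(x_1)$ and $v_2\in\mathcal T(x_2)$ one has $\upalpha v_1+\upbeta v_2\in\mathcal T(\upalpha x_1+\upbeta x_2)$, i.e.\ $G(\mathcal T)$ is closed under the combination $(g_1,g_2)\mapsto\upalpha g_1+\upbeta g_2$. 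Since $D(\mathcal T)$ is nonempty, $G(\mathcal T)$ is nonempty, and the goal is to upgrade this closure to the full statement that $G(\mathcal T)$ is a linear subspace, so that Theorem \ref{Theorem 4} delivers linearity. Concretely this means recovering additivity (property (1)) and, crucially, the homogeneity $\mathcal T(\upgamma x)=\upgamma\mathcal T(x)$ for \emph{every} nonzero scalar $\upgamma$.

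The main obstacle is exactly this last point in the converse. Additivity and closure under the prescribed multipliers $\upalpha,\upbeta$, together with the combinations obtained by iterating the identity (for instance $x_1=x_2$ gives $\mathcal T((\upalpha+\upbeta)x)=\upalpha\mathcal T(x)+\upbeta\mathcal T(x)$), come out readily; but promoting homogeneity to an \emph{arbitrary} scalar starting from a single prescribed pair $\upalpha,\upbeta$ is delicate, since the graph's closure under one fixed affine combination does not by itself force closure under every scalar multiple. I would try to extract the missing scalars by repeatedly exploiting the identity in combination with the linear structure of $D(\mathcal T)$ and the role of $\mathcal T(0)$, and I expect this scalar-homogeneity step to be where the real work lies; I would isolate it as a separate lemma before invoking Theorem \ref{Theorem 4}.

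Finally, part (ii) is a short consequence of the graph picture. Once $\mathcal T$ is known to be linear, Theorem \ref{Theorem 4} gives that both $G(\mathcal T)$ and $G(\mathcal T^{-1})$ are linear subspaces. I would then read off $\mathcal T(0)=\{\,y\in Y:(0,y)\in G(\mathcal T)\,\}$ and $\mathcal T^{-1}(0)=\{\,x\in X:(x,0)\in G(\mathcal T^{-1})\,\}$ as the slices of these subspaces over the origin, and observe that closure of $\mathcal T(0)$ and $\mathcal T^{-1}(0)$ under linear combinations follows directly from subspace closure in $G(\mathcal T)$ and $G(\mathcal T^{-1})$ respectively. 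Hence both $\mathcal T(0)$ and $\mathcal T^{-1}(0)$ are linear subspaces, which completes the argument.
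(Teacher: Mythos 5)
The forward implication of (i) and all of (ii) are fine — your slice argument for (ii) is the standard one, and since $\mathcal T$ is linear with $D(\mathcal T)\ne\emptyset$ the slices are nonempty because $(x,v)-(x,v)=(0,0)\in G(\mathcal T)$. The genuine gap is the converse of (i), which you never actually prove: you correctly sense that upgrading closure of $G(\mathcal T)$ under the single fixed combination $(\upalpha,\upbeta)$ to homogeneity for \emph{every} scalar is "where the real work lies" and defer it to a lemma, but that lemma is false, so no amount of iterating the identity will extract the missing scalars. Concretely, take $X=Y=\mathbb C$ over $\mathbb K=\mathbb C$ and $\mathcal T(x)=\{\bar x\}$: with $\upalpha=\upbeta=1$ the set identity $\mathcal T(x_1+x_2)=\mathcal T(x_1)+\mathcal T(x_2)$ holds for all $x_1,x_2\in D(\mathcal T)=\mathbb C$, yet $\mathcal T(ix)=\{-i\bar x\}\ne i\,\mathcal T(x)$, so $\mathcal T$ is not a linear relation. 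Likewise the identity relation with domain $\mathbb Q\subset\mathbb R$ over $\mathbb K=\mathbb R$ satisfies the identity with $\upalpha=\upbeta=1$ but its domain is not even a subspace. Hence under the literal reading of "some nonzero scalars" the equivalence in (i) is false, and the converse you set out to prove cannot be proved.

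The resolution is that the corollary is imported verbatim (and without proof — the paper offers no argument of its own to compare against) from Cross, where the hypothesis is that the identity holds for \emph{all} nonzero scalars $\upalpha,\upbeta$; the paper's "some" must be read that way. Under the corrected quantifier your graph strategy closes instantly and needs no separate lemma: the identity gives $\upalpha g_1+\upbeta g_2\in G(\mathcal T)$ for all $g_1,g_2\in G(\mathcal T)$ and all nonzero $\upalpha,\upbeta$; taking $\upalpha=\upbeta=1$ yields closure under addition, writing $\upgamma g=(\upgamma/2)g+(\upgamma/2)g$ for $\upgamma\ne 0$ and $0\cdot g=1\cdot g+(-1)\cdot g$ yields closure under every scalar multiple, so $G(\mathcal T)$ is a linear subspace and Theorem \ref{Theorem 4} delivers linearity. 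So your proposed route is the right one, but as written the proof is incomplete at exactly the step you flagged, and it becomes a two-line argument only after the quantifier in the statement is corrected.
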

For a linear relation $\mathcal T$, the subspace $\mathcal T^{-1}(0)$ is called the \emph{null space} (or \emph{kernel}) of $\mathcal T$
and is denoted by $N(\mathcal T)$.
\begin{theorem}\label{Image}
Let $\mathcal T$ be a linear relation in a linear space $X$ and let $x\in D(\mathcal T)$. Then $y\in \mathcal T(x)$ if and only if
$$
\mathcal T(x)= \mathcal T(0) + y.
$$
\end{theorem}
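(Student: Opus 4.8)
The plan is to prove the two implications separately, and within the forward implication to establish the set equality by double inclusion. The reverse implication turns out to be essentially immediate, so the substance of the argument lies in the forward direction.

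First I would dispose of the reverse implication. Suppose $\mathcal T(x)=\mathcal T(0)+y$. By Corollary~\ref{corollary}(ii) the set $\mathcal T(0)$ is a linear subspace of $X$, hence $0\in\mathcal T(0)$, and therefore $y=0+y\in\mathcal T(0)+y=\mathcal T(x)$, as required. (Note this direction does not even invoke $x\in D(\mathcal T)$, though that hypothesis guarantees $\mathcal T(x)$ is nonempty.)

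For the forward implication, assume $y\in\mathcal T(x)$. To show $\mathcal T(0)+y\subseteq\mathcal T(x)$, I would use the additivity property of a linear relation: since the domain is a linear subspace we have $0\in D(\mathcal T)$, so condition (1) with $z=0$ gives the set equality $\mathcal T(0)+\mathcal T(x)=\mathcal T(x)$. Because $y\in\mathcal T(x)$, every element of $\mathcal T(0)+y$ lies in $\mathcal T(0)+\mathcal T(x)$, whence $\mathcal T(0)+y\subseteq\mathcal T(x)$. For the reverse inclusion $\mathcal T(x)\subseteq\mathcal T(0)+y$, the key identity is
$$
\mathcal T(0)=\mathcal T(x-x)=\mathcal T(x)-\mathcal T(x),
$$
obtained from Corollary~\ref{corollary}(i) applied with the two points $x_1=x_2=x$ and the nonzero scalars $\upalpha=1$, $\upbeta=-1$. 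Then any $z\in\mathcal T(x)$ satisfies $z-y\in\mathcal T(x)-\mathcal T(x)=\mathcal T(0)$, so $z\in\mathcal T(0)+y$, completing the double inclusion.

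I expect the only genuinely substantive point to be the second inclusion, where one must recognize that the difference of two elements of $\mathcal T(x)$ lands in $\mathcal T(0)$; this is precisely what the scalar-multiplication and additivity axioms encode once combined into $\mathcal T(0)=\mathcal T(x)-\mathcal T(x)$. An equivalent and perhaps cleaner route avoids this identity altogether by arguing in the graph: by Theorem~\ref{Theorem 4}(ii), $G(\mathcal T)$ is a linear subspace, and since $(x,y)\in G(\mathcal T)$, one has $v\in\mathcal T(0)$, i.e.\ $(0,v)\in G(\mathcal T)$, if and only if $(x,y+v)=(x,y)+(0,v)\in G(\mathcal T)$, i.e.\ $y+v\in\mathcal T(x)$. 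This single equivalence yields both inclusions simultaneously. Either way the proof is short and uses only the linearity already recorded in Theorem~\ref{Theorem 4} and Corollary~\ref{corollary}.
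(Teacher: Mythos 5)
Your proof is correct. Note that the paper itself gives no proof of this statement: Theorem~\ref{Image} is one of the results quoted verbatim from~\cite{cross1998multivalued}, so there is no argument in the text to compare yours against; your write-up simply supplies the missing details, and both of your routes (the double inclusion and the graph argument via Theorem~\ref{Theorem 4}(ii)) are valid. One small quibble: Corollary~\ref{corollary}(i) as stated in the paper asserts the identity $\mathcal T(\upalpha x_1+\upbeta x_2)=\upalpha\mathcal T(x_1)+\upbeta\mathcal T(x_2)$ only for \emph{some} nonzero scalars, so it does not by itself license the specific choice $\upalpha=1$, $\upbeta=-1$; but your key identity $\mathcal T(0)=\mathcal T(x)-\mathcal T(x)$ follows directly from axioms (1) and (2) of the definition, since $\mathcal T(x)-\mathcal T(x)=\mathcal T(x)+(-1)\mathcal T(x)=\mathcal T(x)+\mathcal T(-x)=\mathcal T(x-x)$, so nothing is lost. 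The graph-subspace argument is the cleanest of the two and is essentially how this fact is usually established.
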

Theorem \ref{Image} shows that $\mathcal T$ is single valued if and only if $T(0)= \{0\}$.

\begin{theorem}
Let $\mathcal T\in R(X,Y)$. Then $\mathcal T$ is a linear relation if and only if for all $x_1, x_2 \in D(\mathcal T)$ and
all scalars $\upalpha$ and $\upbeta$,
$$
\upalpha \mathcal T(x_1) + \upbeta\mathcal T(x_2) \subset \mathcal T(\upalpha x_1 + \upbeta x_2).
$$
\end{theorem}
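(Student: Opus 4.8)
The plan is to prove both implications, disposing of the ``only if'' direction first since it is more direct, and reserving a substitution argument for the ``if'' direction, which is where the real work lies.

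For the forward direction (assume $\mathcal T$ is a linear relation), I would split according to whether the scalars vanish. When both $\upalpha$ and $\upbeta$ are nonzero, Corollary \ref{corollary}(i) already gives the equality $\mathcal T(\upalpha x_1 + \upbeta x_2) = \upalpha \mathcal T(x_1) + \upbeta \mathcal T(x_2)$, so the inclusion is immediate. The cases in which a scalar is zero must be handled separately using the set-theoretic conventions $0\cdot S = \{0\}$ and $\{0\} + S = S$ for a nonempty set $S$, together with condition (2) of the definition. The genuinely instructive case is $\upalpha = \upbeta = 0$: here the left side collapses to $\{0\}$ while the right side is $\mathcal T(0)$, and the inclusion holds precisely because $\mathcal T(0)$ is a linear subspace by Corollary \ref{corollary}(ii). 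This case also explains why one can expect only inclusion and not equality in general.

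For the reverse direction (assume the inclusion for all $x_1, x_2 \in D(\mathcal T)$ and all scalars), my goal is to upgrade the inclusion to an equality for nonzero scalars and then invoke Corollary \ref{corollary}(i). Fix nonzero $\upalpha, \upbeta$ and set $u = \upalpha x_1 + \upbeta x_2$. First I note that $u \in D(\mathcal T)$: applying the hypothesis to the pair $(x_1, x_2)$ shows $\mathcal T(u) \supset \upalpha \mathcal T(x_1) + \upbeta \mathcal T(x_2)$, and the latter set is nonempty since $x_1, x_2 \in D(\mathcal T)$. The key step is then to apply the hypothesis a second time, now to the pair $(u, x_2)$ with the rearranged scalars $\tfrac{1}{\upalpha}$ and $-\tfrac{\upbeta}{\upalpha}$, which yields $\tfrac{1}{\upalpha}\mathcal T(u) - \tfrac{\upbeta}{\upalpha}\mathcal T(x_2) \subset \mathcal T(x_1)$ because $\tfrac{1}{\upalpha}u - \tfrac{\upbeta}{\upalpha}x_2 = x_1$. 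Chasing an arbitrary element $v \in \mathcal T(u)$ through this inclusion (choosing any $w \in \mathcal T(x_2)$) produces $s \in \mathcal T(x_1)$ with $v = \upalpha s + \upbeta w$, giving the reverse inclusion $\mathcal T(u) \subset \upalpha \mathcal T(x_1) + \upbeta \mathcal T(x_2)$.

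Combining the two inclusions gives the desired equality for all nonzero $\upalpha, \upbeta$, whence $\mathcal T$ is a linear relation by Corollary \ref{corollary}(i). I expect the main obstacle to be this reverse direction: one has to recognize that the single hypothesis can be re-applied to a different pair of vectors with inverted scalars so as to manufacture the opposite inclusion, and one must separately confirm that the new argument vector $u$ lies in $D(\mathcal T)$ so that $\mathcal T(u)$ is a legitimate nonempty set to which the hypothesis applies. The zero-scalar bookkeeping in the forward direction is routine by comparison but should not be skipped, since it is exactly what distinguishes this inclusion-characterization from the equality-characterization of Corollary \ref{corollary}.
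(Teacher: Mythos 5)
Your proof is correct, but note that the paper itself offers no proof of this statement: it is one of the preliminary results imported verbatim from Cross's book, so there is no in-paper argument to measure yours against. Your two-sided argument is a sound, self-contained derivation from the paper's own Corollary~\ref{corollary}: the forward direction via the equality characterization plus the routine zero-scalar cases (where $0\cdot S=\{0\}$ and $0\in\mathcal T(0)$ do the work), and the reverse direction via the observation that $u=\upalpha x_1+\upbeta x_2\in D(\mathcal T)$ followed by re-applying the hypothesis to the pair $(u,x_2)$ with scalars $\tfrac{1}{\upalpha}$ and $-\tfrac{\upbeta}{\upalpha}$ to manufacture the opposite inclusion. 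That last step is exactly the standard trick, and your care in first verifying $u\in D(\mathcal T)$ (so that $\mathcal T(u)$ is a legitimate nonempty value) closes the only gap a casual reading would leave. One small point worth flagging: Corollary~\ref{corollary}(i) as printed asserts the equality for \emph{some} nonzero scalars, so in the forward direction you are implicitly using the (true, and standard) strengthening that linearity yields the equality for \emph{all} nonzero scalars; this follows directly from conditions (1) and (2) of the definition, so it is harmless, but it deserves a sentence.
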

\begin{theorem}
Let $\mathcal T\in LT(X,Y)$. Then
\begin{itemize}
\item[(a)] $\mathcal T(M+N)=\mathcal TM=\mathcal TN$ for $M\subset X$ and $N\subset D(\mathcal T)$.
\item[(b)] $\mathcal T\mathcal T^{-1}(M)=M\cap R(\mathcal T)+\mathcal T(0)$ for $M\subset Y$.
\item[(c)] $\mathcal T^{-1}\mathcal T(M)=M\cap D(\mathcal T)+\mathcal T^{-1}(0)$ for $M\subset X$.
\end{itemize}
\end{theorem}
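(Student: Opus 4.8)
\section*{Proof proposal}

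The plan is to establish each part by a direct double-inclusion argument, relying on the additivity of a linear relation from Corollary~\ref{corollary}(i), the image identity of Theorem~\ref{Image}, and the set-theoretic description of the inverse in~\eqref{Inverse-set}. Throughout I will use that $D(\mathcal T)$ is a linear subspace, so that whenever $w=m+n$ with $n\in D(\mathcal T)$ and $w\in D(\mathcal T)$, automatically $m=w-n\in D(\mathcal T)$.

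For part (a), reading the identity as $\mathcal T(M+N)=\mathcal TM+\mathcal TN$, I would argue as follows. For the inclusion ``$\subseteq$'', take $y\in\mathcal T(M+N)$; by the definition of the image there is $w\in(M+N)\cap D(\mathcal T)$ with $y\in\mathcal T(w)$, and writing $w=m+n$ with $m\in M$ and $n\in N\subseteq D(\mathcal T)$, subspace-closedness of $D(\mathcal T)$ forces $m\in M\cap D(\mathcal T)$. Then Corollary~\ref{corollary}(i) gives $\mathcal T(w)=\mathcal T(m)+\mathcal T(n)\subseteq\mathcal TM+\mathcal TN$. The reverse inclusion is symmetric: picking $m\in M\cap D(\mathcal T)$ and $n\in N$ representing the two summands, the element $m+n$ lies in $(M+N)\cap D(\mathcal T)$ and $\mathcal T(m)+\mathcal T(n)=\mathcal T(m+n)\subseteq\mathcal T(M+N)$.

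For part (b), I would unwind both sides using~\eqref{Inverse-set} together with Theorem~\ref{Image}. If $u\in\mathcal T^{-1}(M)$, then some $v\in M\cap\mathcal T(u)$ exists; since $v\in R(\mathcal T)$ we have $v\in M\cap R(\mathcal T)$, and Theorem~\ref{Image} yields $\mathcal T(u)=v+\mathcal T(0)\subseteq(M\cap R(\mathcal T))+\mathcal T(0)$, which gives ``$\subseteq$''. Conversely, any $v+z$ with $v\in M\cap R(\mathcal T)$ and $z\in\mathcal T(0)$ arises from some $u\in D(\mathcal T)$ with $v\in\mathcal T(u)$; then $u\in\mathcal T^{-1}(M)$ by~\eqref{Inverse-set}, and $v+z\in v+\mathcal T(0)=\mathcal T(u)\subseteq\mathcal T\mathcal T^{-1}(M)$ by Theorem~\ref{Image}. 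Part (c) then follows formally by applying (b) to the relation $\mathcal T^{-1}$, which is a linear relation by Theorem~\ref{Theorem 4}, and substituting $(\mathcal T^{-1})^{-1}=\mathcal T$, $R(\mathcal T^{-1})=D(\mathcal T)$, and $\mathcal T^{-1}(0)=N(\mathcal T)$.

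The main obstacle I anticipate is bookkeeping rather than depth: the definition $\mathcal T(M)=\bigcup\{\mathcal T(m):m\in M\cap D(\mathcal T)\}$ silently restricts attention to $D(\mathcal T)$, so every inclusion must first confirm that the chosen representatives genuinely lie in the domain before Corollary~\ref{corollary}(i) or Theorem~\ref{Image} can be invoked. In (a) this is precisely where the subspace-closedness of $D(\mathcal T)$ is indispensable, and one must take care that the splitting $w=m+n$ is compatible with the domain.
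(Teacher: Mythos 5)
The paper does not prove this theorem: it is one of the preliminary results imported verbatim from Cross's monograph (the text says these theorems ``are taken from \cite{cross1998multivalued}''), so there is no in-paper argument to compare yours against. Judged on its own, your double-inclusion proof is correct and complete: you rightly read part (a) as $\mathcal T(M+N)=\mathcal TM+\mathcal TN$ (the equals signs in the printed statement are typos for $+$), your use of the subspace property of $D(\mathcal T)$ to get $m=w-n\in D(\mathcal T)$ is exactly the point where the hypothesis $N\subset D(\mathcal T)$ is needed, part (b) correctly combines \eqref{Inverse-set} with Theorem \ref{Image}, and deducing (c) from (b) applied to $\mathcal T^{-1}$ via Theorem \ref{Theorem 4}(iii) and the identities $R(\mathcal T^{-1})=D(\mathcal T)$, $(\mathcal T^{-1})^{-1}=\mathcal T$ is a clean way to avoid repeating the argument. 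No gaps.
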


\subsection{Normed linear relations}
Let $X$ be a normed linear space. By $B_X$ we shall mean the set
$$
B_X:=\{x\in X:\vert x\vert \leq 1\}.
$$
For a closed linear subspace $E$ of $X$, we denote by $Q_E$ the natural quotient map with domain $X$ and null space $E$. For $\mathcal T\in LR(X,Y)$, we shall denote $Q_{\overline{\mathcal T(0)}}$ by $Q_{\mathcal T}$. It is well known that for $\mathcal T\in LR(X,Y)$, the operator $Q_{\mathcal T}\mathcal T$ is single valued (see ~\cite{cross1998multivalued}).

For $\mathcal T\in LR(X,Y)$, we set $\Vert \mathcal Tx\Vert = \Vert Q_{\mathcal T}\mathcal Tx\Vert$ for $x\in D(\mathcal T$ and $\Vert \mathcal T\Vert =\Vert Q_{\mathcal T}\mathcal T\Vert$. Note that these notions do not define a norm since nonzero relations can have zero norm.
\begin{lemma}\label{T-0}
Let $\mathcal A, \mathcal B \in CLR(X,Y)$ be such that $D(\mathcal B)\supset D(\mathcal A)$ and $\mathcal B(0)\subset \mathcal A(0)$. If $x_1, x_2\in D(\mathcal A)$ are such that $\mathcal A(x_1)\cap \mathcal B(x_2)\ne \emptyset$ then $\mathcal A(x_1)-\mathcal B(x_2)\subset \mathcal A(0)$.
\begin{proof}
Let $z\in \mathcal A(x_1)\cap \mathcal B(x_2)$. Since $Q_{\mathcal A}$ and $Q_{\mathcal B}$ are single valued, we see that
$$
Q_{\mathcal A}(\mathcal A(x_1)-\mathcal B(x_2))=Q_{\mathcal A}\mathcal A(x_1)-Q_{\mathcal A}\mathcal B(x_2)=\widetilde{z}-\widetilde{z}=\widehat{0}.
$$
Hence $\mathcal A(x_1)-\mathcal B(x_2)\in \mathcal A(0)$.
\end{proof}
\end{lemma}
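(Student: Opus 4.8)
The plan is to reduce the containment to the coset description of images furnished by Theorem \ref{Image}. Before invoking it, observe that the hypotheses are consistent: since $x_1,x_2\in D(\mathcal A)\subset D(\mathcal B)$, both $\mathcal A(x_1)$ and $\mathcal B(x_2)$ are genuine nonempty images, so the assumption $\mathcal A(x_1)\cap\mathcal B(x_2)\ne\emptyset$ is meaningful. I would fix once and for all a single element $z$ in this intersection; the whole argument hinges on expressing both images as translates by the \emph{same} vector $z$.

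With $z$ chosen, I would apply Theorem \ref{Image} twice. Because $z\in\mathcal A(x_1)$, that theorem yields $\mathcal A(x_1)=\mathcal A(0)+z$, and because simultaneously $z\in\mathcal B(x_2)$, the same theorem applied to $\mathcal B$ gives $\mathcal B(x_2)=\mathcal B(0)+z$. Computing the set difference at the level of Minkowski arithmetic, the common translation by $z$ cancels:
$$
\mathcal A(x_1)-\mathcal B(x_2)=\bigl(\mathcal A(0)+z\bigr)-\bigl(\mathcal B(0)+z\bigr)=\mathcal A(0)-\mathcal B(0),
$$
since every element of the left-hand set has the form $(a+z)-(b+z)=a-b$ with $a\in\mathcal A(0)$ and $b\in\mathcal B(0)$. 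Finally, by Corollary \ref{corollary}(ii) the set $\mathcal A(0)$ is a linear subspace, and by hypothesis $\mathcal B(0)\subset\mathcal A(0)$; hence for any $a\in\mathcal A(0)$ and $b\in\mathcal B(0)\subset\mathcal A(0)$ we have $a-b\in\mathcal A(0)$, giving $\mathcal A(0)-\mathcal B(0)\subset\mathcal A(0)$ and therefore $\mathcal A(x_1)-\mathcal B(x_2)\subset\mathcal A(0)$.

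The only genuinely delicate point is the bookkeeping in the cancellation step: one must be sure it is an honest set identity rather than a formal manipulation, which is exactly why selecting a single common element $z$ of the intersection at the outset (rather than unrelated representatives of the two images) is essential. An alternative route would bypass the coset picture entirely and push the difference through the quotient map $Q_{\mathcal A}$. Since $Q_{\mathcal A}$ is single valued on images and annihilates $\overline{\mathcal A(0)}\supset\mathcal B(0)$, one checks directly that $Q_{\mathcal A}\bigl(\mathcal A(x_1)-\mathcal B(x_2)\bigr)=\widehat 0$, placing the difference in $\overline{\mathcal A(0)}$; closedness of $\mathcal A$ then upgrades this to containment in $\mathcal A(0)$ itself. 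I expect the Theorem \ref{Image} route to be the cleaner of the two, as it is purely algebraic and needs no appeal to closedness.
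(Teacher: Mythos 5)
Your proof is correct, but it takes a genuinely different route from the paper's. The paper pushes the difference through the quotient map: picking $z\in\mathcal A(x_1)\cap\mathcal B(x_2)$, it notes that $Q_{\mathcal A}\mathcal A$ and $Q_{\mathcal A}\mathcal B$ are single valued (the latter because $\mathcal B(0)\subset\mathcal A(0)\subset\overline{\mathcal A(0)}$), computes $Q_{\mathcal A}(\mathcal A(x_1)-\mathcal B(x_2))=\widetilde z-\widetilde z=\widehat 0$, and concludes that the difference lies in the null space of $Q_{\mathcal A}$ --- this is your ``alternative route,'' and you correctly flag its one subtlety, namely that the null space of $Q_{\mathcal A}$ is $\overline{\mathcal A(0)}$, so closedness of $\mathcal A(0)$ (via Lemma \ref{Equivalent-Closed}) is needed to land in $\mathcal A(0)$ itself. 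Your main argument instead uses Theorem \ref{Image} to write $\mathcal A(x_1)=\mathcal A(0)+z$ and $\mathcal B(x_2)=\mathcal B(0)+z$, cancels the common translate $z$ at the level of Minkowski differences, and finishes with $\mathcal A(0)-\mathcal B(0)\subset\mathcal A(0)$ from Corollary \ref{corollary}(ii) and the hypothesis $\mathcal B(0)\subset\mathcal A(0)$. What your version buys is that it is purely algebraic: it never invokes the norm, the quotient map, or closedness, so it actually establishes the lemma for arbitrary $\mathcal A,\mathcal B\in LR(X,Y)$ satisfying the inclusion hypotheses, not just closed ones, and it sidesteps the $\overline{\mathcal A(0)}$ versus $\mathcal A(0)$ issue entirely. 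The paper's version is shorter on the page and fits the quotient-map formalism used throughout the rest of the paper, but is slightly less self-contained.
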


The following lemma is proved in ~\cite{cross1998multivalued}.

\begin{lemma}\label{Equivalent-Closed}
The following properties are equivalent for a linear relation $\mathcal A$.
\begin{itemize}
\item[(i)] $\mathcal A$ is closed.
\item[(ii)] $Q_{\mathcal A}\mathcal A$ is closed and $\mathcal A(0)$ is closed.
\end{itemize}
\end{lemma}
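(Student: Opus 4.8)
The plan is to transport the problem from the graph of $\mathcal A$ to the graph of $Q_{\mathcal A}\mathcal A$ by means of the single bounded linear map
$$
\Phi := I_X \times Q_{\mathcal A} : X\times Y \to X\times \bigl(Y/\overline{\mathcal A(0)}\bigr),\qquad (x,y)\mapsto (x, Q_{\mathcal A}y),
$$
and then to exploit that $\Phi$ is a quotient map whose kernel sits inside $G(\mathcal A)$. Since $Q_{\mathcal A}$ is the canonical quotient onto $Y/\overline{\mathcal A(0)}$, it is continuous, open and surjective, and hence so is $\Phi$; its kernel is $\{0\}\times\overline{\mathcal A(0)}$. Two identities should be recorded at the outset. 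First, $\Phi(G(\mathcal A))=G(Q_{\mathcal A}\mathcal A)$, which is immediate from the fact (Theorem~\ref{Image}) that $\mathcal A x = \mathcal A(0)+y$ for every $y\in\mathcal A x$, so that $Q_{\mathcal A}\mathcal A x = Q_{\mathcal A}y$ is the single value assigned by $Q_{\mathcal A}\mathcal A$. Second, $\{0\}\times\mathcal A(0)\subseteq G(\mathcal A)$, which merely restates $y\in\mathcal A(0)\iff (0,y)\in G(\mathcal A)$.

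For the implication (i)$\Rightarrow$(ii) I would first obtain the closedness of $\mathcal A(0)$ by slicing: intersecting the closed set $G(\mathcal A)$ with the closed subspace $\{0\}\times Y$ yields $\{0\}\times\mathcal A(0)$, which is therefore closed in $X\times Y$, and since the projection $\{0\}\times Y\to Y$ is a linear homeomorphism, $\mathcal A(0)$ is closed in $Y$. Once $\mathcal A(0)$ is closed we have $\overline{\mathcal A(0)}=\mathcal A(0)$, so the kernel of $\Phi$ equals $\{0\}\times\mathcal A(0)$, which by the second identity is contained in $G(\mathcal A)$. I would then invoke the elementary topological fact that, for a quotient map $\Phi$ with kernel $K$ and a subspace $G\supseteq K$, one has $\Phi^{-1}(\Phi(G))=G$, and $\Phi(G)$ is closed precisely when $G$ is. Applying this to $G=G(\mathcal A)$ shows $G(Q_{\mathcal A}\mathcal A)=\Phi(G(\mathcal A))$ is closed, i.e. $Q_{\mathcal A}\mathcal A$ is closed.

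For the converse (ii)$\Rightarrow$(i) I would run the same equivalence in reverse. Assuming $\mathcal A(0)$ closed, the kernel of $\Phi$ is again $\{0\}\times\mathcal A(0)\subseteq G(\mathcal A)$, so $\Phi^{-1}(\Phi(G(\mathcal A)))=G(\mathcal A)$. Since $G(Q_{\mathcal A}\mathcal A)=\Phi(G(\mathcal A))$ is closed by hypothesis and $\Phi$ is continuous, the set $G(\mathcal A)=\Phi^{-1}(G(Q_{\mathcal A}\mathcal A))$ is closed, which is exactly the closedness of $\mathcal A$.

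The step I expect to be the genuine crux is the quotient-map lemma together with the kernel inclusion $\{0\}\times\mathcal A(0)\subseteq G(\mathcal A)$: it is precisely this inclusion that forces $\Phi^{-1}(\Phi(G(\mathcal A)))=G(\mathcal A)$, rather than the a priori larger set $G(\mathcal A)+\bigl(\{0\}\times\overline{\mathcal A(0)}\bigr)$, and it is the openness of $\Phi$ that upgrades ``preimage closed'' to ``image closed.'' Everything else --- the slicing argument and the identification $\Phi(G(\mathcal A))=G(Q_{\mathcal A}\mathcal A)$ --- is routine, so the care must go into formulating the quotient-map fact in a form valid for merely normed (not necessarily complete) $X$ and $Y$, where the canonical quotient map onto $Y/\overline{\mathcal A(0)}$ is still open and surjective.
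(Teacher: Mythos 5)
Your argument is correct, but note that the paper does not actually prove this lemma: it is quoted from \cite{cross1998multivalued} without proof, so there is no in-text argument to compare against. Your route --- transporting $G(\mathcal A)$ through the open continuous surjection $\Phi=I_X\times Q_{\mathcal A}$ and using that a subspace containing $\ker\Phi$ is closed if and only if its image is --- is a clean, self-contained replacement for that citation, and it matches in spirit how the paper elsewhere handles quotients (e.g.\ the sequence-chasing with ${\rm dist}(z_n-z,\mathcal A(0))\to 0$ in Lemma \ref{Lemma 1} and Lemma \ref{Lamma-Closed Range} is exactly the hands-on version of your ``$\Phi^{-1}$ of a closed set is closed'' step). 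All the individual claims check out: the slicing $G(\mathcal A)\cap(\{0\}\times Y)=\{0\}\times\mathcal A(0)$ gives closedness of $\mathcal A(0)$ from (i); the identity $\Phi(G(\mathcal A))=G(Q_{\mathcal A}\mathcal A)$ follows from Theorem \ref{Image}; the kernel inclusion $\{0\}\times\mathcal A(0)\subseteq G(\mathcal A)$ yields $\Phi^{-1}(\Phi(G(\mathcal A)))=G(\mathcal A)$; and openness of the canonical quotient map holds for normed (not necessarily complete) spaces because it carries the open unit ball onto the open unit ball, so no completeness is smuggled in. The only cosmetic caution is that you should state explicitly, in the implication (ii)$\Rightarrow$(i), that the hypothesis ``$\mathcal A(0)$ closed'' is what lets you identify $\ker\Phi=\{0\}\times\overline{\mathcal A(0)}$ with $\{0\}\times\mathcal A(0)$ before invoking the kernel inclusion --- you do say this, so the proof stands as written.
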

\begin{lemma}\label{Norm Difference}

\begin{itemize}
\item[(a)] Let $\mathcal T\in LR(X,Y)$ be bounded. Then $\Vert \mathcal T x\Vert \leq \Vert \mathcal T\Vert \Vert x\Vert$.
\item[(b)] For $\mathcal S, \; \mathcal T \in LR(X,Y)$ with $D(\mathcal S)\subset D(\mathcal T)$ and $\mathcal T(0)\subset \mathcal S(0)$
we have
$$
\Vert \mathcal Sx+\mathcal Tx\Vert \geq \Vert \mathcal Sx\Vert -\Vert \mathcal Tx\Vert.
$$
\end{itemize}
\end{lemma}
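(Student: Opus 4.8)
The plan is to reduce both statements to ordinary (in)equalities for the single-valued linear operators $Q_{\mathcal T}\mathcal T$, exploiting the single-valuedness of $Q_{\mathcal R}\mathcal R$ recorded above together with the defining equalities $\Vert \mathcal T x\Vert = \Vert Q_{\mathcal T}\mathcal T x\Vert$ and $\Vert \mathcal T\Vert = \Vert Q_{\mathcal T}\mathcal T\Vert$.

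For part (a) I would simply observe that $L := Q_{\mathcal T}\mathcal T$ is a single-valued bounded linear operator from $D(\mathcal T)$ into the quotient space $Y/\overline{\mathcal T(0)}$. The assertion is then nothing more than the familiar bound $\Vert L x\Vert \leq \Vert L\Vert\,\Vert x\Vert$ for bounded operators, rewritten by unwinding the two defining equalities.

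For part (b) the key is to identify the quotient map attached to the sum $\mathcal S+\mathcal T$. Since $D(\mathcal S)\subset D(\mathcal T)$ we have $D(\mathcal S+\mathcal T)=D(\mathcal S)$, and since $\mathcal T(0)\subset \mathcal S(0)$ we get $(\mathcal S+\mathcal T)(0)=\mathcal S(0)+\mathcal T(0)=\mathcal S(0)$; hence $\overline{(\mathcal S+\mathcal T)(0)}=\overline{\mathcal S(0)}$ and so $Q_{\mathcal S+\mathcal T}=Q_{\mathcal S}$. This lets me write, for $x\in D(\mathcal S)$, that $\Vert \mathcal S x+\mathcal T x\Vert = \Vert Q_{\mathcal S}(\mathcal S x+\mathcal T x)\Vert$. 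Because $Q_{\mathcal S}$ is single-valued, linear, and annihilates $\mathcal T(0)\subset \overline{\mathcal S(0)}$, both $Q_{\mathcal S}\mathcal S x$ and $Q_{\mathcal S}\mathcal T x$ are single points (using Theorem \ref{Image} to write $\mathcal T x$ as a coset of $\mathcal T(0)$), so the right-hand side equals $\Vert Q_{\mathcal S}\mathcal S x + Q_{\mathcal S}\mathcal T x\Vert$, to which the ordinary reverse triangle inequality applies, yielding the lower bound $\Vert Q_{\mathcal S}\mathcal S x\Vert - \Vert Q_{\mathcal S}\mathcal T x\Vert$.

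It remains to rewrite the two terms. The first equals $\Vert \mathcal S x\Vert$ by definition. For the second, the subtle point — and what I expect to be the main obstacle — is that $\Vert \mathcal T x\Vert$ is defined through $Q_{\mathcal T}$, not through $Q_{\mathcal S}$, so I cannot directly identify $\Vert Q_{\mathcal S}\mathcal T x\Vert$ with $\Vert \mathcal T x\Vert$. Here I would use that the quotient norm decreases as the subspace quotiented out grows: from $\overline{\mathcal T(0)}\subset \overline{\mathcal S(0)}$ one has $\Vert Q_{\mathcal S}z\Vert = \mathrm{dist}(z,\overline{\mathcal S(0)})\leq \mathrm{dist}(z,\overline{\mathcal T(0)}) = \Vert Q_{\mathcal T}z\Vert$ for every $z\in Y$. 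Applying this to a representative of $\mathcal T x$ gives $\Vert Q_{\mathcal S}\mathcal T x\Vert \leq \Vert Q_{\mathcal T}\mathcal T x\Vert = \Vert \mathcal T x\Vert$, and since this term enters with a minus sign the bound is preserved in the required direction. Chaining the inequalities then delivers $\Vert \mathcal S x+\mathcal T x\Vert \geq \Vert \mathcal S x\Vert - \Vert \mathcal T x\Vert$, completing the argument.
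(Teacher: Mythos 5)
Your proposal is correct and follows essentially the same route as the paper: both identify $(\mathcal S+\mathcal T)(0)=\mathcal S(0)$, apply the reverse triangle inequality in the quotient by $\mathcal S(0)$, and then use that enlarging the subspace from $\mathcal T(0)$ to $\mathcal S(0)$ can only decrease the (negatively signed) term, the paper phrasing this via ${\rm dist}(\cdot,\mathcal S(0))$ and you via $\Vert Q_{\mathcal S}(\cdot)\Vert$, which are the same quantity. Part (a) likewise matches the paper's reduction to the bounded single-valued operator $Q_{\mathcal T}\mathcal T$.
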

\begin{proof}
\begin{itemize}
\item[(a)] From ~\cite[II.1.6]{cross1998multivalued} we have
$\Vert \mathcal T \Vert =\underset{x\in B_{D(\mathcal T)}}{{\rm sup}}\Vert \mathcal Tx\Vert$ so that
\begin{eqnarray*}
  \Vert \mathcal T\Vert &=& \underset{x\in D(\mathcal T)}{{\rm sup}}\left\Vert \frac{1}{\Vert x\Vert}\mathcal Tx\right\Vert \; {\rm and} \\
  \Vert \mathcal T\Vert &\geq & \left\Vert \frac{1}{\Vert x\Vert}\mathcal Tx\right\Vert, \; x\in D(\mathcal T).
\end{eqnarray*}
The inequality  $\Vert \mathcal T\Vert \Vert x\Vert \geq \Vert \mathcal Tx\Vert\; {\rm for\; all}\; x\in D(\mathcal T)$ then follows from
~\cite[II.1.5]{cross1998multivalued}.
\item[(b)] Since $\mathcal T(0) \subset \mathcal S(0)$, we see that $(\mathcal S+\mathcal T)(0)=\mathcal S(0)+\mathcal T(0)=\mathcal S(0)$
since $\mathcal S(0)$ is a subspace (linear subset). For $x\in D(\mathcal S)$, let $s \in \mathcal S(x)$ and let $t\in \mathcal T(x)$. Then $s+t\in (\mathcal S+\mathcal T)(x)=\mathcal S(x) +\mathcal T(x)$ and so by ~\cite[II.1.4]{cross1998multivalued} we get
\begin{eqnarray*}
\Vert \mathcal Sx+\mathcal Tx\Vert &=& {\rm dist}\; (s+t, (\mathcal S+\mathcal T)(0)\\
&=& {\rm dist}\; (s+t, \mathcal S(0))\\
&\geq & {\rm dist}\; (s, \mathcal S(0))-{\rm dist}\; (t, (\mathcal S(0))\\
&\geq & {\rm dist}\; (s, \mathcal S(0))-{\rm dist}\; (t, (\mathcal T(0))\\
&=&\Vert \mathcal Sx\Vert -\Vert \mathcal Tx\Vert.
\end{eqnarray*}
\end{itemize}
\end{proof}

Let $X$ be a normed space. By $X^\prime$ we denote the norm dual of $X$, that is, the space of all continuous linear functionals $x^\prime$ defined on $X$, with norm
$$
\Vert x^\prime\Vert = \inf \{ \lambda : \vert [x,x^\prime]\vert \leq \lambda \Vert x\Vert \; {\rm for\; all}\; x\in X\}
$$
where $[x,x^\prime]:=x^\prime(x)$ denotes the action of $x^\prime\in X^\prime$ on $x\in X$. If $M\subset X$ and $N\subset X^\prime$, we write $M^\perp$ and $N^\top$ to mean
\begin{eqnarray*}
&& M^\perp:=\{ x^\prime\in X^\prime : [x,x^\prime]=0 \; {\rm for\; all}\;x\in M\} \;{\rm and}\\
&& N^\top:=\{ x\in X : [x,x^\prime]=0 \; {\rm for\; all}\;x^\prime\in N\}.
\end{eqnarray*}

Let $\mathcal T$ be a linear relation with $D(\mathcal T)\subset X$ and $R(\mathcal T)\subset Y$. We define the adjoint $\mathcal T^\prime $ of $\mathcal T$ by
$$
G\left(\mathcal T^\prime\right):=G\left(-\mathcal T^{-1}\right)^\perp \subset Y^\prime \times X^\prime
$$
where
$$
[(y,x),(y^\prime, x^\prime)]=[x,x^\prime]+[y,y^\prime].
$$
This means that
\begin{equation}\label{Adjoint}
(y^\prime, x^\prime)\in G\left( \mathcal T^\prime\right)\; {\rm if\; and\; only\; if}\; [y,y^\prime]-[x,x^\prime]=0\; {\rm for\; all}\; (x,y)\in G(\mathcal T).
\end{equation}
From \eqref{Adjoint} we see that $y^\prime(y)=x^\prime(x)$ for all $y\in \mathcal T(x)$, $x\in D(\mathcal T)$. Hence
\begin{equation}\label{Adjoint-2}
x^\prime \in \mathcal T^\prime (y^\prime) \; {\rm if\; and\; only\; if}\; y^\prime \mathcal T(x)=x^\prime(x)\; {\rm for\; all}\; x\in D(\mathcal T).
\end{equation}
This means that $x^\prime$ is an extension of $y^\prime\mathcal T(x)$ and therefore the adjoint $\mathcal T^\prime$ can be characterized as follows:
$$
G(\mathcal T^\prime)=\{ (y^\prime, x^\prime)\in Y^\prime \times X^\prime\; {\rm such\; that}\; x^\prime\; {\rm\; is\; an\; extension\; of}\; y^\prime\mathcal T.
$$

Please note that $\mathcal T^\prime\in CLR(Y^\prime, X^\prime)$ (see ~\cite[III.1.2]{cross1998multivalued}).

\begin{lemma}\label{Null-Space-T-0}{\rm ~\cite[III.1.4]{cross1998multivalued}}\\
Let $\mathcal T$ be a closed linear relation. Then
\begin{itemize}
\item[(a)] $N(\mathcal T^\prime)=R(\mathcal T)^\perp$.
\item[(b)] $\mathcal T^\prime (0)= D(\mathcal T)^\perp$.
\item[(c)] $N(\mathcal T)=R(\mathcal T^\prime )^\top$.
\item[(d)] $\mathcal T(0)=D(\mathcal T^\prime)^\top$.
\end{itemize}
\end{lemma}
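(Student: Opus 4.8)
The plan is to obtain (a) and (b) directly from the graph characterization \eqref{Adjoint} of $\mathcal T^\prime$, and then to deduce (c) and (d) from them by dualizing, the passage to the ``reverse'' direction being exactly where the closedness of $\mathcal T$ enters.

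First I would dispose of (a) and (b), neither of which needs closedness. Since $N(\mathcal T^\prime)=\{y^\prime:(y^\prime,0)\in G(\mathcal T^\prime)\}$, setting $x^\prime=0$ in \eqref{Adjoint} shows that $(y^\prime,0)\in G(\mathcal T^\prime)$ is equivalent to $[y,y^\prime]=0$ for every $(x,y)\in G(\mathcal T)$, i.e. for every $y\in R(\mathcal T)$; this is precisely $y^\prime\in R(\mathcal T)^\perp$, giving (a). Dually, $\mathcal T^\prime(0)=\{x^\prime:(0,x^\prime)\in G(\mathcal T^\prime)\}$, and putting $y^\prime=0$ in \eqref{Adjoint} gives $(0,x^\prime)\in G(\mathcal T^\prime)$ if and only if $[x,x^\prime]=0$ for all $x\in D(\mathcal T)$, that is $x^\prime\in D(\mathcal T)^\perp$, giving (b).

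For (c) and (d) I would first record an elementary bookkeeping fact coming from \eqref{Adjoint}: the assignment $(y^\prime,x^\prime)\mapsto(-x^\prime,y^\prime)$ maps $G(\mathcal T^\prime)$ bijectively onto the annihilator $G(\mathcal T)^\perp\subset X^\prime\times Y^\prime$, taken with respect to the pairing $[(x,y),(x^\prime,y^\prime)]=[x,x^\prime]+[y,y^\prime]$. Reading off the coordinate projections, and using that $R(\mathcal T^\prime)$ and $D(\mathcal T^\prime)$ are subspaces (hence invariant under the sign change), this identifies $R(\mathcal T^\prime)$ with the $X^\prime$-projection of $G(\mathcal T)^\perp$ and $D(\mathcal T^\prime)$ with its $Y^\prime$-projection. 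The forward inclusions $N(\mathcal T)\subset R(\mathcal T^\prime)^\top$ and $\mathcal T(0)\subset D(\mathcal T^\prime)^\top$ are then immediate: testing $(x,0)\in G(\mathcal T)$ (resp. $(0,y)\in G(\mathcal T)$) against \eqref{Adjoint} annihilates the relevant pairing, exactly as in (a) and (b).

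The hard part is the reverse inclusions, and here the hypothesis that $\mathcal T$ is closed is essential. Because $G(\mathcal T)$ is then a closed subspace of $X\times Y$, the bipolar theorem gives $\left(G(\mathcal T)^\perp\right)^\top=G(\mathcal T)$. Thus if $x\in R(\mathcal T^\prime)^\top$, then $[x,x^\prime]=0$ for every $x^\prime$ occurring as a first coordinate of $G(\mathcal T)^\perp$; since this condition does not involve the $Y^\prime$-coordinate, it is equivalent to $[(x,0),(x^\prime,y^\prime)]=0$ for all $(x^\prime,y^\prime)\in G(\mathcal T)^\perp$, whence $(x,0)\in\left(G(\mathcal T)^\perp\right)^\top=G(\mathcal T)$, i.e. $x\in N(\mathcal T)$. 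This proves (c), and (d) follows by the identical argument applied to $(0,y)$ and the $Y^\prime$-projection. The one point I would verify with care is that the coordinate projections of $G(\mathcal T)^\perp$ genuinely coincide with $R(\mathcal T^\prime)$ and $D(\mathcal T^\prime)$ and that the annihilator condition decouples as claimed; granting this, the bipolar theorem supplies the rest.
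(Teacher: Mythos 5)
The paper does not prove this lemma; it is quoted verbatim from Cross \cite[III.1.4]{cross1998multivalued}, so there is no internal proof to compare against. Your argument is correct and is essentially the standard one behind the cited result: (a) and (b) follow by specializing \eqref{Adjoint} to $x^\prime=0$ and $y^\prime=0$ respectively, and your identification of $G(\mathcal T)^\perp$ with the sign-flipped graph of $\mathcal T^\prime$, combined with $\bigl(G(\mathcal T)^\perp\bigr)^\top=G(\mathcal T)$ for the closed subspace $G(\mathcal T)\subset X\times Y$, correctly delivers the reverse inclusions in (c) and (d), which is precisely where closedness is needed.
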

\begin{rem}\label{Contain-Prime}
If $\mathcal T$ and $\mathcal S$ are closed linear relations with $D(\mathcal T)\subset D(\mathcal S)$ and $\mathcal S(0)\subset \mathcal T(0)$ then $\mathcal S^\prime(0)\subset \mathcal T^\prime (0)$ by Lemma \ref{Null-Space-T-0} $(b)$.
\end{rem}

\section{Lower bound of a closed linear relation}
Consider a closed linear relation $\mathcal A$ on a Banach space $X$ and let $N(\mathcal A)$ denote the null space of $\mathcal A$ which is closed since $\mathcal A$ is closed. Since $N(\mathcal A)\subset D(\mathcal A)$, a coset $\widetilde{x}\in \widetilde{X}=X/N(\mathcal A)$ which contains a point of $x\in D(\mathcal A)$ consists entirely of points of $D(\mathcal A)$. To see that this is the case, let $\widetilde{x}\in \widetilde{X}$ and let $x,y\in \widetilde{x}$ with $x\in D(\mathcal A)$. Then $y-x\in N(\mathcal A)\subset D(\mathcal A)$ and the linearity of $D(\mathcal A)$ implies that $y=x+(y-x)\in D(\mathcal A)$.  Let $\widetilde{D}$ denote the collection of all such cosets $\widetilde{x}$.
On setting
\begin{equation}\label{cofactor}
A(\widetilde{x}):=Q_{\mathcal A}\mathcal A(x) \quad {\rm for}\; \widetilde{x}\in \widetilde{D},
\end{equation}
we define a linear operator $A:\widetilde{X}\to \widehat{X}$ where $\widehat{X}:=X\slash \mathcal A(0)$. To see that  \eqref{cofactor} is well defined, let $x,y\in \widetilde{x}$. Then $x-y\in N(\mathcal A)$ and therefore
\begin{equation}\label{welldefined}
0\in \mathcal A(0)=\mathcal A(x-y)=\mathcal A(x)-\mathcal A(y).
\end{equation}
We see from \eqref{welldefined} that $\mathcal A(x)\cap \mathcal A(y)\ne \emptyset$. So, let $u\in \mathcal A(x)\cap \mathcal A(y)$. Then
$$
\mathcal A(x)=\mathcal A(0)+u=\mathcal A(y)
$$
so that $Q_{\mathcal A}\mathcal Ax=Q_{\mathcal A}\mathcal Ay$.
We have
\begin{equation}\label{equal}
D(A)=\widetilde{D}, \quad R(A)= R(Q_{\mathcal A}\mathcal A), \quad N(A)=\{ \widetilde{0}\}.
\end{equation}
\begin{rem}\label{Remark-Norm}
Since $\mathcal A(0)\subset R(\mathcal A)$, we also have that a coset $\widehat{x}\in \widehat{X}$ that contains a point of $R(\mathcal A)$ consists entirely of element of $R(\mathcal A)$. To see that this is the case, let $\widehat{x}$ be a coset in $\widehat{X}$ and let $u,v\in \widehat x$ with $u\in R(\mathcal A)$. Then $v-u\in \mathcal A(0)\subset R(\mathcal A)$. The linearity of $R(\mathcal A)$ implies that $v=u+(v-u)\in R(\mathcal A)$.
\end{rem}
\begin{lemma}\label{Lemma 1}
The linear operator $A$ defined by \eqref{cofactor} is closed . 
\end{lemma}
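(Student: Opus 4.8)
The statement asserts that the single-valued operator $A\colon\widetilde X\to\widehat X$ induced by \eqref{cofactor} is closed, so the plan is to show that its graph $G(A)$ is a closed subspace of $\widetilde X\times\widehat X$. Write $T:=Q_{\mathcal A}\mathcal A$, a single-valued operator with domain $D(\mathcal A)$. Since $\mathcal A$ is closed, Lemma \ref{Equivalent-Closed} gives that $T$ is closed and that $\mathcal A(0)$ is closed; together with the closedness of $N(\mathcal A)$ this guarantees that both $\widehat X=X/\mathcal A(0)$ and $\widetilde X=X/N(\mathcal A)$ are Banach spaces. The computation following \eqref{welldefined} shows that $Tx$ depends only on the coset $\widetilde x=Q_{N(\mathcal A)}x$ for $x\in D(\mathcal A)$, so that $A\widetilde x=Tx$; moreover one checks, using Theorem \ref{Image}, that $N(T)=N(\mathcal A)$, which is consistent with $A$ being injective as recorded in \eqref{equal}. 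Thus $A$ is precisely the operator obtained from the closed operator $T$ by passing to the quotient by its (closed) kernel, and the task reduces to proving that such a reduced operator inherits closedness.

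For the core argument I would take a sequence $(\widetilde x_n,\widehat y_n)\in G(A)$ with $\widetilde x_n\to\widetilde x$ in $\widetilde X$ and $\widehat y_n\to\widehat y$ in $\widehat X$, and aim to show $(\widetilde x,\widehat y)\in G(A)$. Choosing any representatives $x_n\in D(\mathcal A)$ of $\widetilde x_n$, the well-definedness of $A$ forces $Tx_n=\widehat y_n$. The difficulty is that convergence of $\widetilde x_n$ in the quotient norm does not by itself produce a convergent sequence of representatives in $X$. To remedy this I would pass to a subsequence for which $\Vert\widetilde x_{n+1}-\widetilde x_n\Vert<2^{-n}$ and then, using the definition of the quotient norm, inductively correct the representatives by elements of $N(\mathcal A)$: starting from $u_1:=x_1$ and choosing $z_n\in N(\mathcal A)$ with $\Vert x_{n+1}-x_n+z_n\Vert<2^{-n}$, set $u_{n+1}:=u_n+(x_{n+1}-x_n+z_n)$. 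Because $N(\mathcal A)\subset D(\mathcal A)$ and $D(\mathcal A)$ is a subspace, each $u_n$ lies in $D(\mathcal A)$ and represents $\widetilde x_n$; since $T$ is unchanged by adding elements of $N(\mathcal A)=N(T)$, we retain $Tu_n=\widehat y_n$, while the telescoping bound makes $(u_n)$ Cauchy and hence convergent to some $u\in X$ by completeness.

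It then remains to conclude. Since $u_n\to u$ and $Tu_n=\widehat y_n\to\widehat y$, closedness of $T$ forces $u\in D(\mathcal A)$ and $Tu=\widehat y$; continuity of the quotient map $Q_{N(\mathcal A)}$ gives $\widetilde x=\lim\widetilde x_n=Q_{N(\mathcal A)}u\in\widetilde D$, whence $A\widetilde x=Tu=\widehat y$ and $(\widetilde x,\widehat y)\in G(A)$, as required. The step I expect to be the main obstacle is exactly this lifting of the quotient-Cauchy sequence to a genuinely convergent sequence of domain representatives; everything else is bookkeeping. A tidier variant that avoids sequences altogether is to observe that the map $(x,\widehat y)\mapsto(Q_{N(\mathcal A)}x,\widehat y)$ identifies $G(A)$ isometrically with the quotient of the graph $G(T)$ by the closed subspace $N(\mathcal A)\times\{0\}$; since $G(T)$ is closed in the Banach space $X\times\widehat X$, hence complete, this quotient is complete, and being an isometric image of a complete space $G(A)$ is itself complete and therefore closed. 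Either route should complete the proof.
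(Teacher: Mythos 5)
Your proof is correct and follows the same overall strategy as the paper's: take a convergent sequence in the graph of $A$, lift it to representatives in $D(\mathcal A)$, and invoke the closedness of $\mathcal A$ (equivalently, of $Q_{\mathcal A}\mathcal A$ via Lemma \ref{Equivalent-Closed}). The genuine difference is in the lifting step, which you correctly single out as the main obstacle. The paper asserts at \eqref{Try} that ${\rm dist}\,(x_n-x,N(\mathcal A))\to 0$ forces $x_n-x$ to converge to some element of $N(\mathcal A)$; as stated this does not follow (the sequence $x_n-x$ could, for instance, lie entirely in $N(\mathcal A)$ and fail to converge), and the same issue recurs for the sequence $z_n$ relative to $\mathcal A(0)$. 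Your repair --- pass to a subsequence with $\Vert\widetilde x_{n+1}-\widetilde x_n\Vert<2^{-n}$ and correct the representatives by elements of $N(\mathcal A)$ so that they become genuinely Cauchy in $X$, while $Q_{\mathcal A}\mathcal A$ is unaffected because $N(\mathcal A)=N(Q_{\mathcal A}\mathcal A)$ --- is exactly the standard argument and closes this gap; passing to a subsequence is harmless since the limit pair is unchanged. Your alternative, sequence-free route (identifying $G(A)$ isometrically with the quotient of the closed, hence complete, graph $G(Q_{\mathcal A}\mathcal A)$ by the closed subspace $N(\mathcal A)\times\{0\}$, so that $G(A)$ is complete and therefore closed) reaches the same conclusion with less bookkeeping and would be a clean substitute for the paper's argument.
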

\begin{proof}
Let $\{ \widetilde x_n\}$ be a sequence in $\widetilde{D}$ such that $\widetilde x_n\to \widetilde x\in \widetilde{X}$ and let
$\left\{Q_{\mathcal A}\mathcal Ax_n\right\}$ be a sequence in $R(A)$ such that $Q_{\mathcal A}\mathcal Ax_n\to \widehat y\in \widehat{X}$. Let $x_n\in \widetilde{x}_n$ and $x\in \widetilde{x}$. Since  $\widetilde x_n\to \widetilde{x}$, we see that ${\rm dist} (x_n-x, N(\mathcal A))\to 0$. This means that $x_n-x$ converges to some element of $N(\mathcal A)$, say,
\begin{equation}\label{Try}
x_n-x\to u\in N(\mathcal A).
\end{equation}
From \eqref{Try} we see that $x_n\to x+u=w\in \widetilde{x}$.

Since $Q_{\mathcal A}\mathcal Ax_n\to \widehat y\in \widehat{X}$, that is, $\widehat{z}_n\to \widehat{y}$, we see that dist$(z_n-y, \mathcal A(0))\to 0$ as $n\to \infty$ and so $z_n\to y+v=z\in \widehat{y}$ for some $v\in \mathcal A(0)$ (where $z_n\in \mathcal A(x_n)$ for each $n\in \mathbb N$). The closedness of $\mathcal A$ implies that $w\in D(\mathcal A)$ and $z\in \mathcal A(w)$.
Hence $\widetilde x \in \widetilde D$ and $A(\widetilde x)=Q_{\mathcal A}\mathcal A(x) = \widehat y$, showing that $A$ is closed.
\end{proof}

We see that $A^{-1}$ is single valued since $A^{-1}\left(\widehat 0\right)=\left\{ \widetilde 0\right\}$.
We now introduce the quantity $\gamma(\mathcal A)$ called the lower bound of the linear relation $\mathcal A$. By definition,
\begin{equation}\label{lower bound}
\gamma(\mathcal A)= \frac{1}{\left\Vert A^{-1}\right\Vert}
\end{equation}
with the understanding that $\gamma(\mathcal A)=0$ if $A^{-1}$ is unbounded and that $\gamma(\mathcal A)=\infty$ if $A^{-1}=0$. It follows from \eqref{lower bound} that
\begin{equation}\label{lower bound 2}
\gamma(\mathcal A)=\sup\left\{ \gamma \in \mathbb R: \Vert \mathcal A(x)\Vert \geq \gamma \Vert \widetilde{x}\Vert={\rm dist} (x, N(\mathcal A))\; \forall x\in D(\mathcal A)\right\}.
\end{equation}
Note that $\gamma (\mathcal A)=\infty$ if and only if $\mathcal Ax=\mathcal A(0)$ for all $x\in D(\mathcal A)$. In order for  \eqref{lower bound 2} to hold even for this case, one should stipulate that $\infty \times 0=0$. Obviously $\gamma(\mathcal A)=\gamma( A)$.

Please note that characterization \eqref{lower bound 2} implies that if $\gamma(\mathcal A)=0$ then  the domain of $\mathcal A$ cannot consist of the zero element alone.

The fact that $\gamma (\mathcal A)=\infty$ if and only if $\mathcal A(x)=\mathcal A(0)$ for all $x\in D(\mathcal A)$ leads to the following lemma (see also ~\cite[Proposition II.2.2]{cross1998multivalued}).
\begin{lemma}\label{Infimum}
For $\mathcal A\in CLR(X,Y)$ we have
$$
\gamma(\mathcal A)=\left\{
\begin{array}{l}
\infty \; \; {\rm if}\; D(\mathcal A)\subset N(\mathcal A)\; {\rm and}\\
\inf \left\{ \frac{\Vert \mathcal A(x)\Vert}{\Vert \widetilde{x}\Vert}:x\in D(\mathcal A)\; \& \; x\notin N(\mathcal A)\right.\; \; {\rm otherwise}.
\end{array}
\right.
$$
\end{lemma}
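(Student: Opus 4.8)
The plan is to read off both branches directly from the characterization \eqref{lower bound 2}, which presents $\gamma(\mathcal A)$ as the supremum of all admissible lower-bound constants, after first reconciling the two descriptions of the degenerate case. I would not need the operator $A$ or its inverse explicitly, though I will indicate an alternative route through $\gamma(\mathcal A)=1/\Vert A^{-1}\Vert$ at the end.

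First I would dispose of the case $D(\mathcal A)\subset N(\mathcal A)$. By Theorem \ref{Image}, for $x\in D(\mathcal A)$ one has $x\in N(\mathcal A)$ precisely when $0\in\mathcal A(x)$, that is, when $\mathcal A(x)=\mathcal A(0)$. Hence $D(\mathcal A)\subset N(\mathcal A)$ holds if and only if $\mathcal A(x)=\mathcal A(0)$ for every $x\in D(\mathcal A)$, and the latter was already recorded (immediately before the statement) to be equivalent to $\gamma(\mathcal A)=\infty$. This settles the first branch and, as a bonus, shows that in the remaining case the indexing set $\{x\in D(\mathcal A):x\notin N(\mathcal A)\}$ of the claimed infimum is nonempty.

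Next, assuming $D(\mathcal A)\not\subset N(\mathcal A)$, I would start from \eqref{lower bound 2}, which says that $\gamma(\mathcal A)$ is the supremum of all $\gamma\in\mathbb R$ for which $\Vert\mathcal A(x)\Vert\geq\gamma\,\mathrm{dist}(x,N(\mathcal A))$ holds for every $x\in D(\mathcal A)$. I would then split this family of inequalities according to whether $x$ lies in $N(\mathcal A)$. If $x\in N(\mathcal A)$, then $\widetilde x=\widetilde 0$, so $\mathrm{dist}(x,N(\mathcal A))=0$ and the requirement reduces to $\Vert\mathcal A(x)\Vert\geq 0$, which holds automatically and places no constraint on $\gamma$. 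If $x\notin N(\mathcal A)$, then, $N(\mathcal A)$ being closed (it is the null space of a closed relation), we have $\Vert\widetilde x\Vert=\mathrm{dist}(x,N(\mathcal A))>0$, and the inequality is equivalent to $\gamma\leq\Vert\mathcal A(x)\Vert/\Vert\widetilde x\Vert$.

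Combining these, the admissible $\gamma$ are exactly the reals bounded above by $\Vert\mathcal A(x)\Vert/\Vert\widetilde x\Vert$ for every $x\in D(\mathcal A)$ with $x\notin N(\mathcal A)$, and their supremum is therefore the infimum of those ratios, which is the second branch. The one point requiring care—and the main (mild) obstacle—is the interchange of ``supremum of admissible constants'' with ``infimum of ratios'': I must check that the set $\{\gamma:\gamma\leq\Vert\mathcal A(x)\Vert/\Vert\widetilde x\Vert\text{ for all }x\notin N(\mathcal A)\}$ is precisely the interval of reals not exceeding $\inf_{x\notin N(\mathcal A)}\Vert\mathcal A(x)\Vert/\Vert\widetilde x\Vert$, whence its supremum equals that infimum. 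Alternatively, the same identity follows from $\gamma(\mathcal A)=1/\Vert A^{-1}\Vert$ upon writing $\Vert A^{-1}\Vert=\sup_{\widetilde x\neq\widetilde 0}\Vert\widetilde x\Vert/\Vert A\widetilde x\Vert$ and using $\Vert\mathcal A(x)\Vert=\Vert Q_{\mathcal A}\mathcal Ax\Vert=\Vert A\widetilde x\Vert$ together with the injectivity of $A$; taking reciprocals then converts the supremum into the stated infimum.
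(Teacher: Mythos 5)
Your proposal is correct and follows essentially the route the paper intends: the paper offers no written-out proof, merely noting that the lemma ``leads from'' the observation that $\gamma(\mathcal A)=\infty$ if and only if $\mathcal A(x)=\mathcal A(0)$ for all $x\in D(\mathcal A)$ together with the characterization \eqref{lower bound 2}, and your argument is exactly a careful unpacking of that: the degenerate branch via Theorem \ref{Image}, and the infimum branch by splitting the constraints in \eqref{lower bound 2} according to whether $x\in N(\mathcal A)$ (using closedness of $N(\mathcal A)$ to get $\Vert\widetilde x\Vert>0$ off the kernel). Nothing is missing.
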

\begin{rem}\label{remark}
A bounded linear operator $T$ is closed if and only if $D(T)$ is closed. 
\end{rem}
\begin{proof}
Suppose that $u_n\to u$ with $u_n\in D(T)$. The boundedness of $T$ implies that $T(u_n)$ is a Cauchy sequence and therefore converges,
say $T(u_n)\to v$. The closedness of $T$ implies that $u\in D(T)$ and $T(u)=v$. This shows that $D(T)$ is closed.
\end{proof}
If $\mathcal S$ is a closed linear relation from $X$ to $Y$, the graph of $\mathcal S$, $G(\mathcal S)$ is a closed subset of $X\times Y$. Sometimes it is convenient to regard it as a subset of $Y\times X$. More precisely, let $G^\prime(\mathcal S)$ be the linear subset of $Y\times X$ consisting of all pairs of the form $(v,u)$ where $u\in D(\mathcal S)$ and $v\in \mathcal S(u)$. We shall call $G^\prime(\mathcal S)$ the inverse graph of $\mathcal S$. As in the case of the graph $G(\mathcal S)$, $G^\prime(\mathcal S)$ is closed if and only $\mathcal S^{-1}$ is closed.
Clearly, $G(\mathcal S)=G^\prime\left(\mathcal S^{-1}\right)$. Thus $\mathcal S^{-1}$ is closed if and only $\mathcal S$ is closed.
\begin{lemma}\label{Lamma-Closed Range}
If $\mathcal A$ is a closed linear relation then $R(\mathcal A)$ is closed if and only if $\gamma(\mathcal A)>0$.
\end{lemma}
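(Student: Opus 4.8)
The plan is to transfer the question from the relation $\mathcal A$ to the single-valued closed injective operator $A:\widetilde X\to\widehat X$ built in \eqref{cofactor}, since by construction $\gamma(\mathcal A)=\gamma(A)=1/\Vert A^{-1}\Vert$ and, by \eqref{equal}, $R(A)=R(Q_{\mathcal A}\mathcal A)=Q_{\mathcal A}(R(\mathcal A))$. First I would record that both quotient spaces are complete: $N(\mathcal A)$ is closed because $\mathcal A$ is closed, so $\widetilde X=X/N(\mathcal A)$ is Banach, and $\mathcal A(0)$ is closed by Lemma \ref{Equivalent-Closed}, so $\widehat X=X/\mathcal A(0)$ is Banach. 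By Lemma \ref{Lemma 1} the operator $A$ is closed, and since $N(A)=\{\widetilde 0\}$ it is injective; hence $A^{-1}$ is a closed single-valued operator with domain $R(A)$ (the inverse of a closed relation is closed, as noted before the statement).

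The key reduction is the claim that $R(\mathcal A)$ is closed if and only if $R(A)=Q_{\mathcal A}(R(\mathcal A))$ is closed in $\widehat X$. Here I would use the inclusion $\mathcal A(0)\subset R(\mathcal A)$ from Remark \ref{Remark-Norm} together with the elementary fact that, for a closed subspace $E$ contained in a subspace $M$, one has $Q_E^{-1}(Q_E(M))=M$. Continuity of $Q_E$ then makes $Q_E(M)$ closed force $M=Q_E^{-1}(Q_E(M))$ closed, while a distance/lifting argument makes $M$ closed force $Q_E(M)$ closed. Applying this with $E=\mathcal A(0)$ and $M=R(\mathcal A)$ yields the equivalence, so it suffices to prove that $R(A)$ is closed if and only if $\gamma(A)>0$.

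For the forward implication, assume $R(\mathcal A)$, hence $R(A)$, is closed. Then $R(A)$ is a closed subspace of the Banach space $\widehat X$, so it is itself Banach; since $A^{-1}$ is closed and defined on all of $R(A)$ with values in the Banach space $\widetilde X$, the closed graph theorem gives that $A^{-1}$ is bounded, whence $\gamma(\mathcal A)=\gamma(A)=1/\Vert A^{-1}\Vert>0$. For the converse, suppose $\gamma(A)>0$. If $\gamma(A)=\infty$ then $A\widetilde x=\widehat 0$ for every $\widetilde x\in D(A)$ and $R(A)=\{\widehat 0\}$ is closed; otherwise fix $\gamma$ with $0<\gamma<\gamma(A)$, so that $\Vert A\widetilde x\Vert\ge\gamma\Vert\widetilde x\Vert$ for all $\widetilde x\in D(A)$ by \eqref{lower bound 2}. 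Given $\widehat y_n=A\widetilde x_n\to\widehat y$ in $\widehat X$, the inequality $\Vert A(\widetilde x_n-\widetilde x_m)\Vert\ge\gamma\Vert\widetilde x_n-\widetilde x_m\Vert$ forces $\{\widetilde x_n\}$ to be Cauchy, hence $\widetilde x_n\to\widetilde x$ in the complete space $\widetilde X$; closedness of $A$ then gives $\widetilde x\in D(A)$ and $A\widetilde x=\widehat y$, so $\widehat y\in R(A)$ and $R(A)$ is closed.

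I expect the main obstacle to be the reduction step rather than the two completeness arguments: one must justify carefully that passing to the quotient by $\mathcal A(0)$ neither creates nor destroys closedness of the range, and this is exactly where the inclusion $\mathcal A(0)\subset R(\mathcal A)$ and the closedness of $\mathcal A(0)$ are essential. Once the problem is reduced to the single-valued injective closed operator $A$ acting between the Banach spaces $\widetilde X$ and $\widehat X$, the two remaining directions are the standard closed-graph-theorem and bounded-below arguments for operators.
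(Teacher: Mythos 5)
Your proposal is correct and follows essentially the same route as the paper: both reduce the statement to the injective closed operator $A$ of \eqref{cofactor}, establish that $R(\mathcal A)$ is closed exactly when $R(Q_{\mathcal A}\mathcal A)$ is closed using $\mathcal A(0)\subset R(\mathcal A)$ and the quotient by the closed subspace $\mathcal A(0)$, and then settle the equivalence with $\gamma(\mathcal A)>0$ via the closed graph theorem in one direction and a bounded-below Cauchy-sequence argument (the content of Remark \ref{remark} applied to $A^{-1}$) in the other. Your phrasing of the reduction through $Q_E^{-1}(Q_E(M))=M$ and continuity of $Q_E$ is a slightly tidier packaging of the paper's explicit lifting computations, but the underlying argument is the same.
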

\begin{proof}
By definition $\gamma(\mathcal A)>0$ if and only if $A^{-1}$ is bounded (where $A$ is the operator defined in \eqref{cofactor}), and this is true if and only if $D\left(
A^{-1} \right)= R\left( A\right)=R(Q_{\mathcal A}\mathcal A)$ is closed (we use the fact that $ A^{-1}$ is closed because $A$ is closed, and then apply Remark \eqref{remark}).

Now assume that $\gamma(\mathcal A)>0$ and set $\left\{ y_n\right\}$ be a convergent sequence in $R(\mathcal A)$ with
\begin{equation}\label{Closed}
y_n\to y.
\end{equation}
Since $Q_{\mathcal A}$ is a bounded linear operator, the sequence $\left\{ Q_{\mathcal A}y_n\right\}$ is a Cauchy sequence in $R(Q_{\mathcal A}\mathcal A)$ and therefore converges to a point $\widehat{z}\in R(Q_{\mathcal A}\mathcal A)\subset \widehat{\mathcal H}$ since $R(Q_{\mathcal A}\mathcal A)$ is closed. We see that
dist$(y_n-z, \mathcal A(0))\to 0$ as $n\to \infty$ so that $y_n-z\to v$ for some $v\in \mathcal A(0)$, that is,
\begin{equation}\label{Closed 2}
y_n\to z+v\in \widehat{z}.
\end{equation}
Since $\mathcal A(0)\subset R(\mathcal A)$, a coset $\widehat{x}\in \widehat{H}$ that contains a point of $R(\mathcal A)$ consists entirely of element of $R(\mathcal A)$. To see that this is the case, let $\widehat{x}$ be a coset in $\widehat{\mathcal H}$ and let $u,v\in \widehat x$ with $u\in R(\mathcal A)$. Then $v-u\in \mathcal A(0)\subset R(\mathcal A)$. The linearity of $R(\mathcal A)$ implies that $v=u+(v-u)\in R(\mathcal A)$.

We see from \eqref{Closed} and \eqref{Closed 2} that $y\in \widehat z$ and that $y\in R(\mathcal A)$ since $z\in R(\mathcal A)$ and $y\in \widehat z$. This shows that $R(\mathcal A)$ is closed.

On the other hand, assume that $R(\mathcal A)$ is closed. Since $A^{-1}$ is closed (since $A$ is closed), it is enough, by the closed graph theorem, to show that $D\left( A^{-1}\right)=R(A)=R(Q_{\mathcal A}\mathcal A)$ is closed. So, assume that $\left\{ \widehat z_n\right\}$ is a sequence in $R(Q_{\mathcal A}\mathcal A)$ such that $\widehat z_n\to \widehat z\in \widehat H$. Then dist$(z_n-z, \mathcal A(0))\to 0$ as $n\to \infty$. Hence, there exists an element $w\in \mathcal A(0)$ such that $z_n\to z+w\in \widehat z$. The closedness of $R(\mathcal A)$ implies that $z+w\in R(\mathcal A)$ so that $\widehat z\in R(Q_{\mathcal A}\mathcal A)$.
\end{proof}
Please see ~\cite[III.5.3]{cross1998multivalued} for another proof of Lemma \ref{Lamma-Closed Range}.

For the definition of continuity and openness of a linear relation $\mathcal T$ mentioned in the following two lammas, please refer to ~\cite{cross1998multivalued}.

\begin{lemma}\label{Norms} {\rm ~\cite[II.3.2, III.1.3, III.1.5, III.4.6]{cross1998multivalued}}\\
Let $\mathcal S, \mathcal T\in LR(X,Y)$. Then
\begin{itemize}
\item[(a)] $\mathcal T$ is continuous if and only if $\Vert T\Vert <\infty$.
\item[(b)] $(\lambda \mathcal T)^\prime = \lambda\mathcal T^\prime \; ({\rm for}\; \lambda\neq 0)$.
\item[(c)] $\mathcal T$ is open if and only if $\gamma(\mathcal T)>0$.
\item[(d)] If $D(\mathcal S)\supset D(\mathcal T)$ and $\Vert \mathcal S\Vert < \infty$ then
$\left( \mathcal T + \mathcal S\right)^\prime = \mathcal T^\prime +\mathcal S^\prime$.
\end{itemize}
\end{lemma}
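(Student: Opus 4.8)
The plan is to treat the four assertions by two distinct mechanisms. Parts (a) and (c) are metric statements that I would reduce to the single-valued induced operator $A:\widetilde X\to \widehat X$ of \eqref{cofactor}, while parts (b) and (d) are adjoint identities that I would settle by unwinding the defining annihilator relation \eqref{Adjoint} directly. In each case the point is that the scalar quantities $\Vert\mathcal T\Vert=\Vert Q_{\mathcal T}\mathcal T\Vert$ and $\gamma(\mathcal T)=\gamma(A)$ are defined through genuine single-valued operators, so the classical operator theory applies once the dictionary is in place.

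For (a), I would recall that $\mathcal T$ is continuous precisely when the single-valued operator $Q_{\mathcal T}\mathcal T$ is continuous, and that $\Vert\mathcal T\Vert=\Vert Q_{\mathcal T}\mathcal T\Vert$. Since a single-valued linear operator is continuous if and only if it has finite norm, the equivalence transfers at once. For (c), I would use that openness of $\mathcal T$ is equivalent to openness of $A$, and that by \eqref{lower bound} one has $\gamma(\mathcal T)=\gamma(A)=1/\Vert A^{-1}\Vert$. Because $N(A)=\{\widetilde 0\}$, the operator $A^{-1}$ is single valued, so $\gamma(\mathcal T)>0$ says exactly that $A^{-1}$ is bounded, which is the assertion that $A$, and hence $\mathcal T$, is open. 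This is the same reduction already exploited in the proof of Lemma \ref{Lamma-Closed Range}.

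For (b), with $\lambda\neq 0$ I would compute graphs directly. Since $(x,y)\in G(\lambda\mathcal T)$ if and only if $(x,\lambda^{-1}y)\in G(\mathcal T)$, the condition \eqref{Adjoint} shows that $(y',x')\in G\left((\lambda\mathcal T)'\right)$ if and only if $[y,\lambda y']-[x,x']=0$ for all $(x,y)\in G(\mathcal T)$, i.e.\ if and only if $(\lambda y',x')\in G(\mathcal T')$; the same bookkeeping shows $(y',x')\in G(\lambda\mathcal T')$ if and only if $(\lambda y',x')\in G(\mathcal T')$, and the two graphs coincide. For (d), the inclusion $\mathcal T'+\mathcal S'\subset(\mathcal T+\mathcal S)'$ is the easy direction: if $(y',x')\in G(\mathcal T')$ and $(y',w')\in G(\mathcal S')$, then by \eqref{Adjoint-2} the functional $x'+w'$ agrees with $y'(\mathcal T+\mathcal S)$ on $D(\mathcal T+\mathcal S)=D(\mathcal T)\cap D(\mathcal S)=D(\mathcal T)$, so $(y',x'+w')\in G\left((\mathcal T+\mathcal S)'\right)$. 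The reverse inclusion is where the hypotheses $D(\mathcal S)\supset D(\mathcal T)$ and $\Vert\mathcal S\Vert<\infty$ enter: boundedness of $\mathcal S$ should let me take $w'\in\mathcal S'(y')$, set $x'=u'-w'$ for a given $(y',u')\in G\left((\mathcal T+\mathcal S)'\right)$, and verify that $x'$ lands in $\mathcal T'(y')$.

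I expect part (d) to be the main obstacle. The forward inclusion and parts (a)--(c) are essentially definitional once $Q_{\mathcal T}$ and $A$ are in play, but the reverse inclusion in (d) is not purely formal: it genuinely uses that $\mathcal S$ is continuous, so that $\mathcal S'$ is defined on a large enough set and behaves additively, and that the domain condition forces no loss when restricting pairings to $D(\mathcal T)$. Controlling the domains of the adjoints --- in particular verifying that the decomposition $u'=x'+w'$ stays inside $G(\mathcal T')$ and $G(\mathcal S')$ rather than merely inside their sum --- is the delicate point, and I would lean on the boundedness of $\mathcal S$ together with the basic adjoint identities of Lemma \ref{Null-Space-T-0} to close it.
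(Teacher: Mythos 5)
The paper offers no proof of this lemma at all: it is imported verbatim from Cross's monograph (the bracketed references II.3.2, III.1.3, III.1.5, III.4.6), so there is no in-paper argument to compare yours against. That said, your reconstruction is essentially the standard one and is correct in outline. Parts (a) and (c) do reduce to the single-valued operators $Q_{\mathcal T}\mathcal T$ and $A$ exactly as you say (modulo quoting the equivalence ``$\mathcal T$ continuous/open iff $Q_{\mathcal T}\mathcal T$, resp.\ $A$, is'' from the same source), and your graph computation for (b) is complete: $(y',x')\in G((\lambda\mathcal T)')$ and $(y',x')\in G(\lambda\mathcal T')$ each unwind to $[y,\lambda y']-[x,x']=0$ for all $(x,y)\in G(\mathcal T)$, using that the pairing is bilinear. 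The only place your sketch stops short is the reverse inclusion in (d), and the step you flag as delicate closes more easily than you suggest: given $(y',u')\in G((\mathcal T+\mathcal S)')$, you have $y'\in D((\mathcal T+\mathcal S)')\subset\bigl((\mathcal T+\mathcal S)(0)\bigr)^\perp\subset\mathcal S(0)^\perp$, and since $\mathcal S$ is continuous Lemma \ref{Domains}(a) gives $D(\mathcal S')=\mathcal S(0)^\perp$, so $y'\in D(\mathcal S')$ and a $w'\in\mathcal S'(y')$ exists. Then for any $(x,y)\in G(\mathcal T)$ the domain hypothesis lets you pick $s\in\mathcal S(x)$, so that $(x,y+s)\in G(\mathcal T+\mathcal S)$ gives $[y+s,y']=[x,u']$ while $[s,y']=[x,w']$, and subtracting yields $[y,y']=[x,u'-w']$, i.e.\ $u'-w'\in\mathcal T'(y')$. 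With that paragraph added, your proof of (d) is complete; nothing in your approach would fail.
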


\begin{lemma}\label{Domains} {\rm ~\cite[III.4.6]{cross1998multivalued}}
\begin{itemize}
\item[(a)] $\mathcal T$ is continuous if and only if $D(\mathcal T^\prime)=\mathcal T(0)^\perp$.
\item[(b)] $\mathcal T$ is open if and only if $R(\mathcal T^\prime)= N(\mathcal T)^\perp$.
\item[(c)] If $\mathcal T$ is continuous, then $\left\Vert \mathcal T^\prime \right\Vert = \Vert \mathcal T\Vert$.
\item[(d)] If $\mathcal T$ is open, then $\gamma(\mathcal T)=\gamma(\mathcal T^\prime)$.
\end{itemize}
\end{lemma}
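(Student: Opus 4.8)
The plan is to treat all four parts through a single reduction: pass from $\mathcal T$ to the injective reduced operator $A$ built from $\mathcal T$ exactly as in \eqref{cofactor}, for which $\gamma(\mathcal T)=\gamma(A)$, and to exploit two isometric identifications coming from Hahn--Banach, namely $\mathcal T(0)^\perp\cong \widehat Y{}'$ and $N(\mathcal T)^\perp\cong\widetilde X{}'$ (the annihilator of a subspace is the dual of the quotient by it). Throughout, the characterization \eqref{Adjoint-2} together with Lemma \ref{Null-Space-T-0}(b) shows that $D(\mathcal T')\subseteq \mathcal T(0)^\perp$ always, since for $y'\in D(\mathcal T')$ the scalar $y'\mathcal T(x)$ must be single valued, forcing $y'$ to annihilate $\mathcal T(0)$; dually $R(\mathcal T')\subseteq N(\mathcal T)^\perp$ always, because $x\in N(\mathcal T)$ gives $\mathcal T(x)=\mathcal T(0)$ and hence $x'(x)=y'\mathcal T(x)=0$ whenever $x'\in\mathcal T'(y')$. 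Thus in (a) and (b) only the reverse inclusions carry content.

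For (a) and (c) I would first assume continuity, i.e.\ $\Vert\mathcal T\Vert<\infty$ by Lemma \ref{Norms}(a). Given any $y'\in\mathcal T(0)^\perp$, writing $\hat y'$ for the induced functional on $\widehat Y$ gives $\lvert y'\mathcal T(x)\rvert=\lvert\hat y'(Q_{\mathcal T}\mathcal T x)\rvert\le\Vert\hat y'\Vert\,\Vert\mathcal T\Vert\,\Vert x\Vert$, so $y'\mathcal T$ is a bounded functional on $D(\mathcal T)$, which extends by Hahn--Banach to some $x'$ with $x'\in\mathcal T'(y')$; hence $y'\in D(\mathcal T')$ and (a) follows in this direction. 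The same computation yields (c): for $y'\in D(\mathcal T')$ the value $\Vert\mathcal T'y'\Vert=\operatorname{dist}(x',\mathcal T'(0))=\operatorname{dist}(x',D(\mathcal T)^\perp)$ equals the norm of the restriction $y'\mathcal T$ on $D(\mathcal T)$ (the minimal-norm extension, by Hahn--Banach duality), and taking suprema over $\Vert y'\Vert=\Vert\hat y'\Vert\le1$ and $\Vert x\Vert\le 1$ and interchanging them gives $\Vert\mathcal T'\Vert=\sup_{\Vert x\Vert\le1}\Vert Q_{\mathcal T}\mathcal T x\Vert=\Vert\mathcal T\Vert$. For the converse in (a), if $D(\mathcal T')=\mathcal T(0)^\perp$ then every $\hat y'\in\widehat Y{}'$ makes $x\mapsto\hat y'(Q_{\mathcal T}\mathcal T x)$ bounded; the set $\{Q_{\mathcal T}\mathcal T x:\Vert x\Vert\le1\}$ is therefore weakly bounded in $\widehat Y$, and the uniform boundedness principle (applied in the Banach space $\widehat Y{}''$) forces $\Vert\mathcal T\Vert<\infty$, i.e.\ continuity.

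For (b) and (d) I would use openness, $\gamma(\mathcal T)=\gamma(A)>0$ by Lemma \ref{Norms}(c), to invert the construction. Given $x'\in N(\mathcal T)^\perp$, it descends to a functional $\tilde x'$ on $\widetilde D$; I define $\hat y'$ on $R(A)=R(Q_{\mathcal T}\mathcal T)$ by $\hat y'(A\widetilde x):=\tilde x'(\widetilde x)$, which is well defined because $A$ is injective and bounded because $\Vert\widetilde x\Vert\le\gamma(\mathcal T)^{-1}\Vert A\widetilde x\Vert$ by \eqref{lower bound 2}. Extending by Hahn--Banach and pulling back to $y'\in\mathcal T(0)^\perp$ yields $x'\in\mathcal T'(y')\subseteq R(\mathcal T')$, proving $N(\mathcal T)^\perp\subseteq R(\mathcal T')$ and hence equality. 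Tracking the norms in this argument—choosing the minimal extension and using the lower bound in one direction and the openness estimate in the other—gives the matching inequalities for part (d), after identifying $\operatorname{dist}(y',N(\mathcal T'))$ with $\operatorname{dist}(y',R(\mathcal T)^\perp)$ via Lemma \ref{Null-Space-T-0}(a), so that $\gamma(\mathcal T')=\gamma(\mathcal T)$.

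I expect the genuine obstacle to be the reverse implication in (b): deducing openness of $\mathcal T$ from the purely algebraic equality $R(\mathcal T')=N(\mathcal T)^\perp$. The forward direction only needs Hahn--Banach together with the lower bound, but here one must manufacture a strictly positive $\gamma(\mathcal T)$ out of a closedness statement about the range of the adjoint. My plan is to transcribe the hypothesis into the operator $A$, where $R(\mathcal T')=N(\mathcal T)^\perp$ says that $A'$ is surjective onto the full dual $\widetilde X{}'$, and then invoke the closed-range/open-mapping duality for operators (surjectivity of $A'$ forcing $A$ to be bounded below) to conclude $\gamma(\mathcal T)>0$. This is the one step that cannot be reduced to elementary annihilator bookkeeping and where completeness of $X$ is essential.
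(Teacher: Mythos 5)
There is no in-paper proof to compare against: Lemma~\ref{Domains} is quoted verbatim from \cite{cross1998multivalued} (III.4.6), so your argument has to stand on its own. On its own merits the outline is essentially sound, and it is in the same spirit as the device the paper itself uses in Section~3: replace $\mathcal T$ by the injective single-valued operator $A$ of \eqref{cofactor}, identify $\mathcal T(0)^\perp=\overline{\mathcal T(0)}^{\,\perp}$ isometrically with $\widehat Y{}'$ and $N(\mathcal T)^\perp$ with the dual of $X\slash\overline{N(\mathcal T)}$, and then run Hahn--Banach, uniform boundedness and closed-range duality on $A$. The automatic inclusions $D(\mathcal T')\subseteq\mathcal T(0)^\perp$ and $R(\mathcal T')\subseteq N(\mathcal T)^\perp$, the Hahn--Banach halves of (a) and (b), the identity $\Vert\mathcal T'y'\Vert=\operatorname{dist}(x',D(\mathcal T)^\perp)=\sup\{|y'\mathcal T(x)|:x\in D(\mathcal T),\Vert x\Vert\le1\}$ feeding into (c), and the weak-boundedness/uniform-boundedness converse of (a) are all correct (for the last one, the family of evaluations lives in $\widehat Y{}''$ and the principle is applied over the Banach space $\widehat Y{}'$, a small slip of wording only).

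Two places are thinner than they should be. First, the converse of (b) is delegated wholesale to the closed-range theorem for the operator $A$, which is generally unbounded and need not be densely defined in $\widetilde X$; that theorem is true, but it is precisely the heavy lifting here, and translating the hypothesis $R(\mathcal T')=N(\mathcal T)^\perp$ into ``$A'$ is surjective'' requires you to quotient out the multivalued part $\mathcal T'(0)=D(\mathcal T)^\perp$ (Lemma~\ref{Null-Space-T-0}(b)) and to replace $\widetilde X$ by the closure of $\widetilde D$ when $D(\mathcal T)$ is not dense; none of that bookkeeping appears. Second, part (d) is only asserted. The clean way to finish it is to observe that when $\mathcal T$ is open $A^{-1}$ is a bounded everywhere-defined operator on the closed subspace $R(Q_{\mathcal T}\mathcal T)$, that $(A^{-1})'=(A')^{-1}$ up to the same identifications, and that $\Vert(A^{-1})'\Vert=\Vert A^{-1}\Vert$, which gives $\gamma(\mathcal T')=\Vert(A')^{-1}\Vert^{-1}=\Vert A^{-1}\Vert^{-1}=\gamma(\mathcal T)$ by \eqref{lower bound}. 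With those two items written out, your reconstruction would be a complete proof of the cited result.
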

\section{The gap between closed linear manifolds and their dimensions}
Let $Z$ be a Banach space and let $L$ be a closed subspaces of $Z$. We denote by $S_L$ the unit sphere of $L$, that is, $S_L:=\{ u\in L:\Vert u\Vert=1\}$. For any two closed linear manifolds $M$ and $N$ of $Z$ with $M\neq \{0\}$,  define the gap between $M$ and $N$, denoted by $\delta (M,N)$ to be
$$
\delta (M,N):=\underset{u\in S_M}{\sup}\;{\rm dist}\;(u,N)
$$
and set $\delta(M, N)=0$ if $M=\{0\}$. $\delta (M,N)$ can also be characterized as the smallest number $\delta$ for which
\begin{equation}\label{Characterize}
{\rm dist}\;(u,N)\leq \delta\Vert u\Vert \; {\rm for\; all}\; u\in M.
\end{equation}
It can be seen from the definition that $0\leq \delta (M,N)\leq 1$.

See ~\cite{kato2013perturbation} for the following lemma.

\begin{lemma}
Let $M$ and $N$ be linear manifolds in a Banach space $Z$. If ${\rm dim}\;M>{\rm dim}\;N$ then there exists an $x\in M$ such that
$$
{\rm dist} (x, N)=\Vert x\Vert >0.
$$
\end{lemma}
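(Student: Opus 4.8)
The plan is to reduce the statement to a finite-dimensional, purely topological fact and then invoke the Borsuk--Ulam theorem. Since the inequality $\dim M > \dim N$ is only meaningful for this argument when $n := \dim N < \infty$, I would first replace $M$ by a subspace $M_0 \subseteq M$ with $\dim M_0 = n+1$; any $x$ produced in $M_0$ still lies in $M$, and $\mathrm{dist}(x,N)$ is computed with the same norm, so it suffices to work inside the finite-dimensional space $W := M_0 + N$.

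Next I would reformulate the target condition. For $x \in W$ the inequality $\mathrm{dist}(x,N) \le \|x\|$ always holds because $0 \in N$; thus $\mathrm{dist}(x,N) = \|x\|$ is equivalent to saying that $0$ is a \emph{nearest point} of $N$ to $x$, i.e.\ that the metric projection of $x$ onto $N$ vanishes. This suggests defining the nearest-point map $g \colon M_0 \to N$ by $\|u - g(u)\| = \mathrm{dist}(u,N)$ and then searching for a nonzero $u$ with $g(u)=0$. The symmetry of the norm gives $g(-u) = -g(u)$, so $g$ is odd. If $g$ is single-valued and continuous, its restriction to the unit sphere is an odd continuous map $S_{M_0} \to N$; since $\dim M_0 = n+1$ we have $S_{M_0} \cong S^n$ and $N \cong \mathbb{R}^n$, so Borsuk--Ulam forces a zero $u_*$, which is exactly a unit vector with $\mathrm{dist}(u_*,N) = \|u_*\| = 1 > 0$. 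The dimensions match precisely because we cut $M$ down to dimension $n+1$, and any such zero automatically avoids $M_0 \cap N$ (on which $g$ is the identity), so $u_*$ is genuinely transverse to $N$.

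The hard part is that for a general Banach norm the metric projection onto $N$ need not be single-valued or continuous, so $g$ is not literally a map to which Borsuk--Ulam applies. I would remove this difficulty by approximation: fix an auxiliary Euclidean norm $|\cdot|$ on $W$ and set $\|\cdot\|_k := \|\cdot\| + \tfrac1k |\cdot|$. Each $\|\cdot\|_k$ is strictly convex and decreases uniformly to $\|\cdot\|$ on bounded sets, and for a strictly convex norm the nearest-point map \emph{is} single-valued and continuous. Applying the previous step to each $\|\cdot\|_k$ produces $u_k \in M_0$ with $\|u_k\|_k = 1$ and $\|u_k - y\|_k \ge 1$ for all $y \in N$.

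Finally I would pass to the limit. The sequence $(u_k)$ is bounded, since $\|u_k\| \le \|u_k\|_k = 1$, so a subsequence converges to some $u_* \in M_0$; because $\tfrac1k|u_k| \to 0$ one gets $\|u_k\| \to 1$ and hence $\|u_*\| = 1$, while letting $k \to \infty$ in $\|u_k - y\|_k \ge 1$ yields $\|u_* - y\| \ge 1$ for every $y \in N$. Combined with the trivial bound $\mathrm{dist}(u_*,N) \le \|u_*\|$, this gives $\mathrm{dist}(u_*,N) = \|u_*\| = 1$, so $x := u_* \in M_0 \subseteq M$ is the required vector. I expect the genuinely delicate point to be exactly this interchange of the projection/Borsuk--Ulam step with the strictly convex approximation; everything else is bookkeeping, and the essential topological content is already visible in the case $n=1$, where the oddness of a continuous real function on a circle forces a zero by the intermediate value theorem.
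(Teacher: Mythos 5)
Your argument is correct, but there is no in-paper proof to compare it with: the authors quote this lemma from Kato, where it is a classical result of Krein, Krasnoselskii and Milman whose standard proof is essentially the Borsuk--Ulam argument you reconstruct. So what you have done is supply the omitted proof rather than find an alternative route. The steps all check out: replacing $M$ by an $(n+1)$-dimensional subspace $M_0$ loses nothing; the sum of the given norm and $\tfrac1k$ times a Euclidean norm is strictly convex, so the nearest-point map onto the finite-dimensional (hence closed) subspace $N$ is single-valued, and a routine boundedness-plus-uniqueness-of-cluster-points argument gives its continuity; oddness follows from $\Vert -u-y\Vert=\Vert u-(-y)\Vert$; and the limit $k\to\infty$ is sound because $\tfrac1k\vert u_k-y\vert\to0$ for each fixed $y$ while $\Vert u_k-y\Vert_k\ge1$. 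Two small caveats. First, your remark that the hypothesis is ``only meaningful'' for $\dim N<\infty$ understates the point: for infinite-dimensional $N$ the lemma as printed (without a closedness assumption on $N$) is actually \emph{false} --- take $N$ a dense proper subspace of countable Hamel dimension in a separable space and $M=Z$ --- so the restriction to finite $n$, where closedness is automatic, is not merely convenient but necessary; it is also the only case the paper uses, as the parenthetical ``$N$ is closed since $\dim N<\infty$'' in the following lemma shows. Second, since the paper allows $\mathbb K=\mathbb C$, all dimensions should be read as real dimensions before invoking Borsuk--Ulam; this costs nothing, as an odd continuous map of $S^{m}$ into a proper subspace of $\mathbb R^{m}$ still has a zero.
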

The above lemma can be expressed in the language of the quotient space as follows.
\begin{lemma}
Let $M$ and $N$ be linear manifolds in a Banach space $Z$. If ${\rm dim}\;M>{\rm dim}\;N$ then there exists an $x\in M$ such that
$$
\Vert \widetilde{x}\Vert=\Vert x\Vert >0, \quad {\rm where}\; \widetilde{x}\in \in \widetilde{X}:=X\slash N. \; (N\; {\rm is\; closed\; since\;
dim}\;N<\infty)
$$
\end{lemma}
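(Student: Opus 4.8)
The plan is to observe that this lemma is merely a restatement of the preceding one in the language of the quotient space, so essentially the entire analytic burden has already been discharged. First I would note that the hypothesis ${\rm dim}\;M > {\rm dim}\;N$ forces ${\rm dim}\;N < \infty$ (this is exactly the content of the parenthetical remark), whence $N$ is a closed subspace of $Z$ and the quotient space $\widetilde Z := Z/N$ is a genuine normed space. The norm it carries is, by definition, the quotient norm
$$
\Vert \widetilde x \Vert = \inf_{n \in N} \Vert x - n \Vert = {\rm dist}\;(x, N),
$$
valid for every $x \in Z$ with canonical image $\widetilde x \in \widetilde Z$.

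Next I would simply invoke the preceding lemma: since ${\rm dim}\;M > {\rm dim}\;N$, there exists $x \in M$ with ${\rm dist}\;(x, N) = \Vert x \Vert > 0$. Substituting the identification of ${\rm dist}\;(x, N)$ with the quotient norm from the display above yields $\Vert \widetilde x \Vert = \Vert x \Vert > 0$, which is precisely the assertion to be proved. Note that the same $x$ produced by the previous lemma serves here without modification, so no new selection argument is needed.

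There is no genuine obstacle here; the only point deserving a moment's care is the well-definedness of the quotient norm, which rests on the closedness of $N$. This in turn is guaranteed by the finite-dimensionality of $N$, since every finite-dimensional subspace of a normed space is closed. All of the real content --- the existence of an element of $M$ realizing its own norm as its distance to $N$ --- is supplied verbatim by the preceding lemma, and the present statement only rephrases that conclusion through the quotient-norm identity.
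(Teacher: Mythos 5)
Your proposal is correct and follows exactly the route the paper intends: the paper offers no separate proof, merely noting that this lemma is the preceding one ``expressed in the language of the quotient space,'' which is precisely your identification of ${\rm dist}\,(x,N)$ with the quotient norm $\Vert \widetilde{x}\Vert$. Your added remark that finite-dimensionality of $N$ guarantees closedness (and hence that the quotient norm is well defined) is a sensible explicit justification of the paper's parenthetical aside.
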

The following lemma is a direct consequence of the preceding one.
\begin{lemma}\label{Dimension}
If $\Vert \widetilde{x}\Vert <\Vert x\Vert$ for every none zero $x\in M$ where $\widetilde{x}\in \widetilde{X}=X\slash N$ then
${\rm dim}\; M\leq {\rm dim}\; N$.
\end{lemma}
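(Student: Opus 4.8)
The plan is to argue by contraposition and invoke the preceding lemma verbatim. The key preliminary observation, which I would state first, is that for \emph{every} $x\in M$ one automatically has
$$
\Vert\widetilde{x}\Vert=\mathrm{dist}(x,N)\leq \Vert x\Vert,
$$
because $0\in N$ is an admissible competitor in the infimum defining the distance to $N$. Consequently the hypothesis of Lemma \ref{Dimension}, namely that $\Vert\widetilde{x}\Vert<\Vert x\Vert$ for every nonzero $x\in M$, is logically equivalent to the assertion that there is \emph{no} nonzero $x\in M$ satisfying $\Vert\widetilde{x}\Vert=\Vert x\Vert$. This equivalence is precisely the bridge between the strict-inequality hypothesis stated here and the equality conclusion furnished by the preceding lemma.

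Granting this reformulation, I would suppose toward a contradiction that $\dim M>\dim N$. The preceding lemma then produces a vector $x\in M$ with $\Vert\widetilde{x}\Vert=\Vert x\Vert>0$; in particular such an $x$ is nonzero and realizes equality in the quotient norm. This directly contradicts the hypothesis, which guarantees the strict inequality $\Vert\widetilde{x}\Vert<\Vert x\Vert$ at every nonzero element of $M$. Hence the assumption $\dim M>\dim N$ cannot hold, and I conclude that $\dim M\leq \dim N$, as required.

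I do not anticipate any genuine technical obstacle, since the statement is essentially the contrapositive of the lemma immediately preceding it. The single point that warrants care is the elementary remark that the quotient norm never exceeds the ambient norm; this is what licenses passing from ``strict inequality for all nonzero $x$'' to ``equality for no nonzero $x$'', and thereby lets the preceding lemma be applied without any further estimation. Everything else is a routine logical rearrangement.
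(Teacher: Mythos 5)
Your argument is correct and coincides with the paper's: the paper states Lemma \ref{Dimension} as ``a direct consequence of the preceding one,'' which is precisely the contrapositive reading you spell out. Your added remark that the quotient norm never exceeds the ambient norm is a harmless (and accurate) clarification of why the strict-inequality hypothesis rules out the equality case produced by the preceding lemma.
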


See ~\cite[Page 200]{kato2013perturbation} and ~\cite{kato1958perturbation} for Lemma \ref{Kato page 200} and Lemma \ref{Extra} respectively.
\begin{lemma}\label{Kato page 200}
Let $M$ and $N$ be closed linear manifolds of a Banach space $Z$. If $\delta (M,N)<1$ then ${\rm dim}\; M\leq {\rm dim}\; N$.
\end{lemma}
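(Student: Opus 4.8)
The plan is to reduce the statement directly to Lemma \ref{Dimension} by combining the characterization \eqref{Characterize} of the gap with the fact that the quotient norm on $\widetilde{X} = X/N$ is exactly the distance to $N$. Concretely, recall that for $x \in X$ the coset norm is $\|\widetilde{x}\| = \operatorname{dist}(x,N)$; this is where the closedness of $N$ (part of the hypothesis) enters, ensuring that $\widetilde{X}$ is a genuine normed space so that Lemma \ref{Dimension} is applicable.

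First I would invoke \eqref{Characterize}: since $\delta(M,N)$ is by definition the smallest $\delta$ for which $\operatorname{dist}(u,N) \le \delta\|u\|$ holds for all $u \in M$, we have in particular
$$
\operatorname{dist}(x,N) \le \delta(M,N)\,\|x\| \quad \text{for all } x \in M.
$$
Next, using the hypothesis $\delta(M,N) < 1$, for any nonzero $x \in M$ we have $\|x\| > 0$, hence $\delta(M,N)\|x\| < \|x\|$, and combining this with the displayed estimate gives
$$
\|\widetilde{x}\| = \operatorname{dist}(x,N) \le \delta(M,N)\,\|x\| < \|x\|.
$$
Thus the strict inequality $\|\widetilde{x}\| < \|x\|$ holds for every nonzero $x \in M$, which is precisely the hypothesis of Lemma \ref{Dimension}. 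Applying that lemma yields $\dim M \le \dim N$, completing the argument.

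The step carrying the real content is the passage from the non-strict estimate $\operatorname{dist}(x,N) \le \delta(M,N)\|x\|$ supplied by the gap to the strict estimate $\operatorname{dist}(x,N) < \|x\|$ demanded by Lemma \ref{Dimension}; the hypothesis $\delta(M,N) < 1$ is exactly what converts the $\le$ into a $<$ on nonzero vectors. I do not anticipate a genuine obstacle here, as the remainder is bookkeeping: identifying the coset norm with the distance to $N$, and observing that the degenerate case $M = \{0\}$ is automatic (then $\dim M = 0 \le \dim N$). The only point needing care is that $N$ be closed so that $\widetilde{X} = X/N$ is normed, which is guaranteed by hypothesis.
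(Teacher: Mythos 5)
Your proof is correct, and it is essentially the argument the paper intends: the paper itself gives no proof of this lemma, citing \cite[Page 200]{kato2013perturbation} instead, and Kato's proof is exactly the reduction you carry out --- combine the characterization \eqref{Characterize} of $\delta(M,N)$ with the strictness supplied by $\delta(M,N)<1$ to get $\Vert\widetilde{x}\Vert<\Vert x\Vert$ for all nonzero $x\in M$, then invoke Lemma \ref{Dimension} (equivalently, the contrapositive of the Riesz-type lemma preceding it). Your handling of the degenerate case $M=\{0\}$ and the remark that closedness of $N$ is what makes $Z/N$ a normed space are both fine; no gaps.
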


\begin{lemma}\label{Extra}
Let $x$ be an element of a normed linear space $X$ and let $M$ and $N$ be closed linear subspaces of $X$. Consider the quotient space $\widetilde{X}:=X\slash N$ and let $\widetilde{x}$ denote the quotient class of $x$. For any $\varepsilon>0$ there exists $x_0\in \widetilde{x}$ such that
\begin{equation}\label{Extra-2}
{\rm dist}\;(x_0,M)\geq (1-\varepsilon)\left( \frac{1-\delta(M,N)}{1+\delta(M,N)}\right)\Vert x_0\Vert.
\end{equation}
\end{lemma}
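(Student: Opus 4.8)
The plan is to take for $x_0$ a representative of $\widetilde x$ whose norm is as close as possible to $\|\widetilde x\|={\rm dist}(x,N)$, and then to bound ${\rm dist}(x_0,M)$ from below using the gap characterization \eqref{Characterize} together with the triangle inequality in the quotient $X/N$. First I would dispose of the trivial cases: if $\delta(M,N)=1$ the right-hand side of \eqref{Extra-2} is $0$ and there is nothing to prove, and if $x\in N$ one may take $x_0=0$; so assume $\delta:=\delta(M,N)<1$ and ${\rm dist}(x,N)=:\rho>0$. Given $\varepsilon>0$, I fix a small $\eta>0$ (to be chosen at the end) and use the definition of ${\rm dist}(x,N)$ to pick $n_0\in N$ with $x_0:=x-n_0\in\widetilde x$ and $\|x_0\|\le(1+\eta)\rho$.

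The core of the argument is a single chain of inequalities valid for every $m\in M$. Since $x_0-x\in N$ we have ${\rm dist}(x_0,N)={\rm dist}(x,N)=\rho$, i.e.\ $\|\widetilde{x_0}\|=\rho$. Writing $\widetilde{x_0}=\widetilde{x_0-m}+\widetilde m$ and applying the triangle inequality in $X/N$ gives
\begin{equation*}
\rho=\|\widetilde{x_0}\|\le {\rm dist}(x_0-m,N)+{\rm dist}(m,N)\le \|x_0-m\|+\delta\|m\|,
\end{equation*}
where the last step uses ${\rm dist}(x_0-m,N)\le\|x_0-m\|$ and the gap estimate \eqref{Characterize} applied to $m\in M$. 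Estimating $\|m\|\le\|x_0-m\|+\|x_0\|$ by the triangle inequality and substituting yields
\begin{equation*}
\rho\le (1+\delta)\|x_0-m\|+\delta\|x_0\|,
\end{equation*}
so that $\|x_0-m\|\ge\bigl(\rho-\delta\|x_0\|\bigr)/(1+\delta)$.

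Finally I would feed in the choice of $x_0$. From $\rho\le\|x_0\|\le(1+\eta)\rho$ we get $\rho-\delta\|x_0\|\ge(1-\delta(1+\eta))\rho\ge(1-\delta(1+\eta))\|x_0\|/(1+\eta)$, hence
\begin{equation*}
\|x_0-m\|\ge\frac{1-\delta(1+\eta)}{(1+\delta)(1+\eta)}\,\|x_0\|\qquad\text{for all }m\in M.
\end{equation*}
Taking the infimum over $m\in M$ bounds ${\rm dist}(x_0,M)$ below by the same factor times $\|x_0\|$, and since $\frac{1-\delta(1+\eta)}{(1+\delta)(1+\eta)}\to\frac{1-\delta}{1+\delta}$ as $\eta\to0^+$, choosing $\eta$ small enough makes this factor at least $(1-\varepsilon)\frac{1-\delta}{1+\delta}$, which is \eqref{Extra-2}. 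The step I expect to require the most care is controlling the unknown $\|m\|$: the naive bound ${\rm dist}(x_0,M)\ge{\rm dist}(x,M+N)$ is useless, since it can vanish (for instance when $M+N$ is dense), so it is essential to keep $m$ inside the estimate and eliminate it through $\|m\|\le\|x_0-m\|+\|x_0\|$ rather than discarding it.
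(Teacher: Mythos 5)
Your argument is correct. The paper does not prove this lemma itself but only cites Kato (1958), and your proof is essentially the standard one from that source: choose a representative $x_0\in\widetilde{x}$ with $\Vert x_0\Vert\leq (1+\eta)\,{\rm dist}(x,N)$, run the triangle inequality in $X\slash N$ together with \eqref{Characterize} to get $\Vert x_0-m\Vert\geq \bigl({\rm dist}(x,N)-\delta\Vert x_0\Vert\bigr)/(1+\delta)$ for every $m\in M$, and let $\eta\to 0^+$ (the only point worth making explicit is that $\eta$ must also be small enough that $1-\delta(1+\eta)\geq 0$ before you replace ${\rm dist}(x,N)$ by $\Vert x_0\Vert/(1+\eta)$, which your final choice of $\eta$ already accommodates).
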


\section{The quantity $\nu(\mathcal A:\mathcal B)$}
Let $X$ and $Y$ be two linear spaces and let $\mathcal A, \mathcal B \in LR(X,Y)$ with $\mathcal B(0)\subset \mathcal A(0)$. For $n\in \mathbb N$, let $M_n$ and $N_n$ be the linear manifolds of $X$ and $M_n^\prime$ and $N_n^\prime$ be the linear manifolds of $Y^\prime$ defined inductively as follows:
\begin{equation}\label{Inductively-1}
M_0=X, M_n=\mathcal B^{-1}(\mathcal A(M_{n-1}))\; \; {\rm for}\; n=1, 2, \cdots,
\end{equation}
\begin{equation}\label{Inductively-2}
N_1=\mathcal A^{-1}(0), N_n=\mathcal A^{-1}(\mathcal B(N_{n-1}))\; \; {\rm for}\; n=2, 3, \cdots,
\end{equation}
\begin{equation}\label{Inductively-3}
M_0^\prime=Y^\prime, M_n^\prime=\mathcal {B^\prime}^{-1}(\mathcal A^\prime(M_{n-1}^\prime))\; \; {\rm for}\; n=1, 2, \cdots,
\end{equation}
\begin{equation}\label{Inductively-4}
N_1^\prime=\mathcal {A^\prime}^{-1}(0), N_n^\prime=\mathcal {A^\prime}^{-1}(\mathcal B^\prime(N_{n-1}^\prime))\; \; {\rm for}\; n=2, 3, \cdots.
\end{equation}
If $M_k\supset M_{k+1}$ then $\mathcal A(M_k)\supset \mathcal A(M_{k+1})$ and therefore
\begin{equation}\label{Induction-1}
M_{k+1}=\mathcal B^{-1}(\mathcal A(M_k))\supset \mathcal B^{-1}(\mathcal A(M_{k+1}))=M_{k+2}.
\end{equation}
Since $M_0=X\supset D(\mathcal B)\supset M_1$, we conclude by induction that
\begin{equation}\label{Induction-2}
M_0\supset M_1 \supset M_2 \supset \cdots \supset N(\mathcal B).
\end{equation}
Similarly,
\begin{equation}\label{Induction-3}
N_1\subset N_2 \subset N_3 \subset \cdots \subset D(\mathcal A).
\end{equation}
Note that
\begin{equation}\label{Null-Space}
N_1= N(\mathcal{A}).
\end{equation}

\begin{lemma}\label{Equivalent}
Let $n$ be a positive integer. The following first $n$ conditions are equivalent to one another and they in turn imply that condition $(\kappa)$ holds.
\begin{itemize}
\item[(1)] $N_1\subset M_n$,
\item[(2)] $N_2\subset M_{n-1}$,
\item[ ]  \quad \;  $\vdots$
\item[($n$)] $N_n\subset M_1$,
\item[($\kappa$)] $\mathcal A(N_{k+1}) \cap B(N_k)\ne \emptyset$, $N_k\subset D(\mathcal B)$, for $k=1,2,\cdots, n$.
\end{itemize}
\begin{proof}
First we prove the equivalence of the conditions $(1)$ to $(n)$. For each $r=1, 2, \cdots, n-1$, $(r)$ implies $(r+1)$. In fact if
$N_r\subset M_{n-r+1}$, then \eqref{Induction-1}, \eqref{Induction-2} and \eqref{Null-Space} imply that
\begin{eqnarray*}
N_{r+1}&=&\mathcal A^{-1}(\mathcal B(M_r))\subset \mathcal A^{-1}(\mathcal B(M_{n-r+1}))\subset \mathcal A^{-1}(\mathcal A(M_{n-r})+\mathcal B(0))\\
&\subset& \mathcal A^{-1}(\mathcal A(M_{n-r})+\mathcal A(0))= \mathcal A^{-1}[\mathcal A(M_{n-r})+\mathcal A(N(\mathcal A))]\\
&=& \mathcal A^{-1}[\mathcal A(M_{n-r})+N(\mathcal A)] \subset M_{n-r}+N(\mathcal A)+\mathcal A^{-1}(0)\\
&=& M_{n-r}+\mathcal A^{-1}(0) = M_{n-r}+ N_1\subset M_{n-r}+ N_r \subset M_{n-r}+ M_{n-r+1}\\
&=&M_{n-r}.
\end{eqnarray*}
Conversely $(r+1)$ implies $r$. In fact, if $N_{r+1}\subset M_{n-r}$, then
$$
N_r\subset N_{r+1}\subset M_{n-r}=\mathcal B^{-1}(\mathcal A(M_{n-r-1}))
$$
so that each $x\in N_r$ has the property that there exists a $z\in \mathcal B(x)$ such that $z\in \mathcal A(y)$ for some $y\in M_{n-r-1}$. Then $y\in \mathcal A^{-1} (\mathcal B(N_r))=N_{r+1}\subset M_{n-r}$ and so $x\in \mathcal B^{-1}(\mathcal A(M_{n-r}))=M_{n-r+1}$ This proves that $N_r\subset M_{n-r+1}$.

Next we prove that $(n)$ implies $(\kappa)$. So, suppose that $(n)$ is satisfied. Then $N_k\subset N_n\subset M_1=\mathcal B^{-1}(\mathcal AX)\subset D(\mathcal B)$ for $k<n$, so that for each $x\in N_k$, there exists a $z\in \mathcal B(x)$ such that $z\in \mathcal A(y)$ for some $y\in X$. Then $y\in \mathcal A^{-1}(\mathcal B(N_k))=N_{k+1}$ and so $\mathcal A(N_{k+1})\cap \mathcal B(N_k)\ne \emptyset$.
\end{proof}
\end{lemma}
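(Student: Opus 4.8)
The plan is to establish the chain $(1)\Leftrightarrow(2)\Leftrightarrow\cdots\Leftrightarrow(n)$ by proving that each consecutive pair is equivalent, that is, that condition $(r)$, namely $N_r\subset M_{n-r+1}$, is equivalent to condition $(r+1)$, namely $N_{r+1}\subset M_{n-r}$, for every $r=1,\dots,n-1$; transitivity then links all $n$ conditions. Afterwards I would deduce $(\kappa)$ directly from $(n)$.

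For the forward implication $(r)\Rightarrow(r+1)$ I would start from the definition $N_{r+1}=\mathcal A^{-1}(\mathcal B(N_r))$ and push the hypothesis $N_r\subset M_{n-r+1}$ through the monotonicity of the images. Applying $\mathcal B$ to the identity $M_{n-r+1}=\mathcal B^{-1}(\mathcal A(M_{n-r}))$ and using the composition formula $\mathcal B\mathcal B^{-1}(M)=M\cap R(\mathcal B)+\mathcal B(0)$ gives $\mathcal B(M_{n-r+1})\subset\mathcal A(M_{n-r})+\mathcal B(0)$. The hypothesis $\mathcal B(0)\subset\mathcal A(0)$ lets me replace $\mathcal B(0)$ by $\mathcal A(0)=\mathcal A(N(\mathcal A))$, and then the additivity of the image together with $\mathcal A^{-1}\mathcal A(M)=M\cap D(\mathcal A)+\mathcal A^{-1}(0)$ collapses the right-hand side to $M_{n-r}+N(\mathcal A)$. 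Finally, since $N(\mathcal A)=N_1\subset N_r\subset M_{n-r+1}\subset M_{n-r}$ by the hypothesis and the nesting \eqref{Induction-2}, this last summand is absorbed into $M_{n-r}$, yielding $N_{r+1}\subset M_{n-r}$.

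For the reverse implication $(r+1)\Rightarrow(r)$ I would argue element-wise. Assuming $N_{r+1}\subset M_{n-r}$, I use the nesting \eqref{Induction-3} to write $N_r\subset N_{r+1}\subset M_{n-r}=\mathcal B^{-1}(\mathcal A(M_{n-r-1}))$, so that each $x\in N_r$ admits a $z\in\mathcal B(x)$ with $z\in\mathcal A(y)$ for some $y\in M_{n-r-1}$. Such a $y$ lies in $\mathcal A^{-1}(\mathcal B(N_r))=N_{r+1}\subset M_{n-r}$, whence $z\in\mathcal A(M_{n-r})$ and therefore $x\in\mathcal B^{-1}(\mathcal A(M_{n-r}))=M_{n-r+1}$, which is exactly $(r)$.

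Finally, to obtain $(\kappa)$ from $(n)$, I would note that for $k\le n$ the nesting gives $N_k\subset N_n\subset M_1=\mathcal B^{-1}(\mathcal A X)\subset D(\mathcal B)$, which is the inclusion required in $(\kappa)$; the same membership $N_k\subset M_1$ produces, for each $x\in N_k$, an element $z\in\mathcal B(x)$ with $z\in\mathcal A(y)$ for some $y\in X$, and then $y\in\mathcal A^{-1}(\mathcal B(N_k))=N_{k+1}$ forces $z\in\mathcal A(N_{k+1})\cap\mathcal B(N_k)$, so this intersection is nonempty. I expect the forward implication $(r)\Rightarrow(r+1)$ to be the main obstacle: unlike the single-valued case, the image/preimage identities yield only inclusions, so one must track the multivalued part $\mathcal A(0)$ carefully and invoke $\mathcal B(0)\subset\mathcal A(0)$ at precisely the right moment to keep every inclusion pointing in the correct direction.
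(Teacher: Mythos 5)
Your proposal is correct and follows essentially the same route as the paper: the same inclusion chain for $(r)\Rightarrow(r+1)$ using $\mathcal B\mathcal B^{-1}$, $\mathcal A^{-1}\mathcal A$ and $\mathcal B(0)\subset\mathcal A(0)$, the same element-wise argument for $(r+1)\Rightarrow(r)$, and the same derivation of $(\kappa)$ from $(n)$. The only (favorable) difference is that you explicitly cover the case $k=n$ in $(\kappa)$, which the paper's wording ``for $k<n$'' glosses over.
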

If $N_1\subset M_n$ then $N_1\subset M_{n^\prime}$, for all $n^\prime < n$ since $M_n$ is a non increasing sequence. We denote by $\nu(\mathcal A:\mathcal B)$ the smallest number $n$ for which the condition $N_1\subset M_n$ (or any one of the other equivalent conditions) is not satisfied. We set $\nu(\mathcal A:\mathcal B)=\infty$ if there is no such $n$. This is the case if for example $\mathcal A^{-1}(0)\subset \mathcal B^{-1}(0)$.

\begin{lemma}
Let $X$ and $Y$ be Banach spaces and let $\mathcal A, \mathcal B \in CLR(X,Y)$ with $D(\mathcal A)=D(\mathcal B)=X$. Then
\begin{equation}\label{Adjoint-Sequences}
M_n^\prime \subset (\mathcal B(N_n))^\perp \; {\rm and}\; N_n^\prime \subset (\mathcal A(M_{n-1}))^\perp\; {\rm for}\; n= 1, 2, \dots.
\end{equation}
\end{lemma}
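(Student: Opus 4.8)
The plan is to prove both inclusions by induction on $n$, running the two inductions in parallel since they are mirror images of one another. The single tool needed is the adjoint pairing identity coming from \eqref{Adjoint}: if $\mathcal T\in CLR(X,Y)$ and $x^\prime\in\mathcal T^\prime(y^\prime)$, then $[y,y^\prime]=[x,x^\prime]$ for every $x\in D(\mathcal T)$ and every $y\in\mathcal T(x)$. This is well defined in the multivalued setting because \eqref{Adjoint} forces $[y,y^\prime]$ to agree for all $y\in\mathcal T(x)$, independently of the chosen representative. Applying this identity once for $\mathcal A$ and once for $\mathcal B$, and then feeding in the induction hypothesis, collapses each pairing to $0$.

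For the first inclusion $M_n^\prime\subset(\mathcal B(N_n))^\perp$ I would argue as follows. Fix $y^\prime\in M_n^\prime$ and $y\in\mathcal B(N_n)$; the goal is $[y,y^\prime]=0$. Unwinding \eqref{Inductively-3} together with \eqref{Inverse-set} produces $x^\prime\in\mathcal B^\prime(y^\prime)$ and $v^\prime\in M_{n-1}^\prime$ with $x^\prime\in\mathcal A^\prime(v^\prime)$, while unwinding \eqref{Inductively-2} produces $x\in N_n$ with $y\in\mathcal B(x)$ and an element $w\in\mathcal A(x)\cap\mathcal B(N_{n-1})$. The pairing identity applied to $\mathcal B$ (using $x^\prime\in\mathcal B^\prime(y^\prime)$ and $y\in\mathcal B(x)$) gives $[y,y^\prime]=[x,x^\prime]$, and applied to $\mathcal A$ (using $x^\prime\in\mathcal A^\prime(v^\prime)$ and $w\in\mathcal A(x)$) gives $[x,x^\prime]=[w,v^\prime]$; hence $[y,y^\prime]=[w,v^\prime]$. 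Since $w\in\mathcal B(N_{n-1})$ and $v^\prime\in M_{n-1}^\prime\subset(\mathcal B(N_{n-1}))^\perp$ by the induction hypothesis, we conclude $[y,y^\prime]=0$. The base case $n=1$ is the same computation with the convention $M_0^\prime=Y^\prime$: there $w$ is replaced by $0\in\mathcal A(x)$, because $N_1=N(\mathcal A)=\mathcal A^{-1}(0)$ by \eqref{Null-Space}, so $[x,x^\prime]=[0,v^\prime]=0$ directly.

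The second inclusion $N_n^\prime\subset(\mathcal A(M_{n-1}))^\perp$ is proved by the mirror-image induction, now peeling off $\mathcal A^\prime$ first and $\mathcal B^\prime$ second. Fixing $y^\prime\in N_n^\prime$ and $y\in\mathcal A(M_{n-1})$, \eqref{Inductively-4} yields $x^\prime\in\mathcal A^\prime(y^\prime)$ and $v^\prime\in N_{n-1}^\prime$ with $x^\prime\in\mathcal B^\prime(v^\prime)$, and \eqref{Inductively-1} yields $x\in M_{n-1}$ with $y\in\mathcal A(x)$ and an element $w\in\mathcal B(x)\cap\mathcal A(M_{n-2})$; two applications of the pairing identity give $[y,y^\prime]=[x,x^\prime]=[w,v^\prime]=0$ by the induction hypothesis $N_{n-1}^\prime\subset(\mathcal A(M_{n-2}))^\perp$. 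Here the base case $n=1$ is cleanest: $N_1^\prime=N(\mathcal A^\prime)$, and since $D(\mathcal A)=X$ gives $\mathcal A(M_0)=\mathcal A(X)=R(\mathcal A)$, Lemma \ref{Null-Space-T-0}(a) yields $N_1^\prime=R(\mathcal A)^\perp=(\mathcal A(M_0))^\perp$ outright.

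I do not expect a substantive obstacle: the content is entirely the two-fold use of \eqref{Adjoint}. The only real care needed is the bookkeeping in the multivalued setting, namely correctly extracting the intersection witnesses $w$, $x^\prime$, $v^\prime$ from the set-valued inverses via \eqref{Inverse-set}, and checking that each pairing is representative-independent (which, as noted, is built into \eqref{Adjoint}). The standing hypotheses $D(\mathcal A)=D(\mathcal B)=X$ and closedness are exactly what make the adjoints and Lemma \ref{Null-Space-T-0} available.
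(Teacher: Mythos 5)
Your proposal is correct and follows essentially the same route as the paper: the base case for the second inclusion via Lemma \ref{Null-Space-T-0}(a), and the inductive step by extracting the intersection witnesses from the set-valued inverses and applying the adjoint pairing identity \eqref{Adjoint} once for $\mathcal B$ and once for $\mathcal A$ to reduce $[y,y^\prime]$ to a pairing killed by the induction hypothesis. The only difference is cosmetic: the paper writes out the first inclusion and declares the second ``similar,'' whereas you spell out the mirror-image induction explicitly.
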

\begin{proof}
First we show that \eqref{Adjoint-Sequences} holds for $n=1$. To begin with, let $y^\prime \in M_1^\prime$ and let $x\in D(\mathcal B)\cap N_1$. Then by definition, $y^\prime \in \mathcal {B^\prime}^{-1}[\mathcal A^\prime (Y^\prime)]$ and $x\in \mathcal A^{-1}(0)\cap D(\mathcal B)$. Hence there exists an element $x^\prime \in \mathcal A^\prime (Y^\prime)\cap R(\mathcal B^\prime)$ such that $(y^\prime, x^\prime)\in G(\mathcal B^\prime)$. Since $x^\prime \in A^\prime (Y^\prime)$, there exists an element $f^\prime \in D(\mathcal A^\prime) \subset Y^\prime$ such that $(f^\prime, x^\prime)\in G(\mathcal A^\prime)$. Since $(x,0)\in G(\mathcal A)$, \eqref{Adjoint} implies that $f^\prime(0)=x^\prime(x)$ so that $x^\prime(x)=0$. So, for $y\in \mathcal B(x)$, $y^\prime(y)=x^\prime(x)=0$, showing the $y^\prime\in [\mathcal B(N_1)]^\perp$.

The second inclusion follows from
$$
N_1^\prime =N(\mathcal A^\prime)=R(\mathcal A)^\perp = [\mathcal A(M_0)]^\perp \; \; ({\rm see\; Lemma}\; \ref{Null-Space-T-0}\; (a)).
$$
We shall therefore assume that \eqref{Adjoint-Sequences} has been proved for $n=k$ and prove it for $n=k+1$. So, let $g^\prime \in M_{k+1}^\prime$ and let $z\in D(\mathcal B)\cap N_{k+1}$. Then $g^\prime \in {\mathcal B^\prime}^{-1}[\mathcal A^\prime (M^\prime_k)]$ and $z\in \mathcal A^{-1}[\mathcal B(N_k)]\cap D(\mathcal B)$. Hence there exists an element $h^\prime \in \mathcal A^\prime (M_k^\prime)$ such that $(g^\prime, h^\prime)\in G(\mathcal B^\prime)$. Since $h^\prime \in \mathcal A^\prime (M_k^\prime)$ it follows that there exists an element $l^\prime \in M_k^\prime$ such that $(l^\prime, h^\prime)\in G(\mathcal A^\prime)$. The fact that $z\in N_{k+1}$ means that there is an element $w\in \mathcal B(N_k)$ such that $(z,w)\in G(\mathcal A)$. This means that $l^\prime (w)=h^\prime(z)$ and $h^\prime (z)=0$ since $l\in [\mathcal B(N_k)]^\perp$. So, for $u\in \mathcal B(z)$, $g^\prime(u)=h^\prime(z)=0$ meaning that $g^\prime \in [\mathcal B(N_{k+1})]^\perp$ and that $M_{k+1}^\prime \subset [\mathcal B(N_{k+1})]^\perp$. This proves the first inclusion in \eqref{Adjoint-Sequences}. The second inclusion can be proved in a similar way.
\end{proof}
\begin{lemma}\label{Contains-Null-Space}
Let $\mathcal A\in CLR (X,Y)$. For every $f^\prime \in N(\mathcal A)^\perp$ there exists $g^\prime \in Y^\prime$ such that $g^\prime (y)=f^\prime (x)$ for all $y\in \mathcal Ax$, and all $x\in D(\mathcal A)$.
\end{lemma}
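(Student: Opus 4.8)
The plan is to build $g'$ in two stages: first define it on the range $R(\mathcal A)$ as the only functional compatible with the required identity, and then extend it to all of $Y$ by Hahn--Banach.

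First I would define $g_0$ on $R(\mathcal A)$ by the rule: for $y\in R(\mathcal A)$ choose any $x\in D(\mathcal A)$ with $y\in\mathcal A(x)$ and set $g_0(y):=f'(x)$. The crucial point for well-definedness is precisely where the hypothesis $f'\in N(\mathcal A)^\perp$ enters: if $y\in\mathcal A(x_1)\cap\mathcal A(x_2)$, then the set identity $\mathcal A(x_1)-\mathcal A(x_2)=\mathcal A(x_1-x_2)$ of Corollary \ref{corollary} gives $0\in\mathcal A(x_1-x_2)$, so $x_1-x_2\in\mathcal A^{-1}(0)=N(\mathcal A)$ and hence $f'(x_1)=f'(x_2)$. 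Linearity of $g_0$ follows from the same corollary via $\alpha\mathcal A(x_1)+\beta\mathcal A(x_2)=\mathcal A(\alpha x_1+\beta x_2)$. Note that $g_0$ is consistent on each coset $\mathcal A(x)=y+\mathcal A(0)$, so it automatically annihilates $\mathcal A(0)$.

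The step I expect to be the main obstacle is \emph{boundedness} of $g_0$. Since $f'$ kills $N(\mathcal A)$, for $y\in\mathcal A(x)$ we have $|g_0(y)|=|f'(x-n)|\le\Vert f'\Vert\,\mathrm{dist}(x,N(\mathcal A))$ for every $n\in N(\mathcal A)$; by the lower-bound characterization \eqref{lower bound 2}, $\mathrm{dist}(x,N(\mathcal A))\le\gamma(\mathcal A)^{-1}\Vert\mathcal A(x)\Vert$, and since $\Vert\mathcal A(x)\Vert=\mathrm{dist}(y,\overline{\mathcal A(0)})\le\Vert y\Vert$ this yields $|g_0(y)|\le\Vert f'\Vert\,\gamma(\mathcal A)^{-1}\Vert y\Vert$. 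Thus $g_0$ is bounded as soon as $\gamma(\mathcal A)>0$, i.e. (Lemma \ref{Lamma-Closed Range} and Lemma \ref{Norms}(c)) as soon as $R(\mathcal A)$ is closed, equivalently $\mathcal A$ is open. I expect this hypothesis to be indispensable: the conclusion says exactly that $f'\in R(\mathcal A')$ (the $g'$ produced satisfies $(g',f')\in G(\mathcal A')$ by the adjoint characterization \eqref{Adjoint-2}), and by Lemma \ref{Domains}(b) the equality $R(\mathcal A')=N(\mathcal A)^\perp$ is equivalent to openness; without it $g_0$ genuinely fails to be continuous, as one already sees for an injective diagonal operator with dense non-closed range. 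I would therefore carry out the argument under the standing assumption that $\mathcal A$ is open.

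Finally, with $g_0\in (R(\mathcal A))'$ bounded, I would invoke the Hahn--Banach theorem to extend it to $g'\in Y'$ with $g'|_{R(\mathcal A)}=g_0$. Then for every $x\in D(\mathcal A)$ and every $y\in\mathcal A(x)\subset R(\mathcal A)$ we obtain $g'(y)=g_0(y)=f'(x)$, which is the asserted identity. The only inclusion automatic without openness is the reverse one, $R(\mathcal A')\subset N(\mathcal A)^\perp$, coming directly from $N(\mathcal A)=R(\mathcal A')^\top$ in Lemma \ref{Null-Space-T-0}(c).
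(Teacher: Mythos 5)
Your proof is correct and follows essentially the same route as the paper's: define the functional on $R(\mathcal A)$ by $g_0(y)=f^\prime(x)$, bound it via $\vert g_0(y)\vert \leq \Vert f^\prime\Vert\,{\rm dist}\,(x,N(\mathcal A))\leq \Vert f^\prime\Vert\,\gamma(\mathcal A)^{-1}\Vert \mathcal A(x)\Vert \leq \Vert f^\prime\Vert\,\gamma(\mathcal A)^{-1}\Vert Q_{\mathcal A}\Vert\,\Vert y\Vert$, and extend by Hahn--Banach. You are also right to flag that this estimate --- and hence the lemma as stated --- needs $\gamma(\mathcal A)>0$ (equivalently, $R(\mathcal A)$ closed): the paper divides by $\gamma(\mathcal A)$ without stating this hypothesis, and your observation that the resulting equality $N(\mathcal A)^\perp=R(\mathcal A^\prime)$ of Remark \ref{Remark-Null} would otherwise contradict Lemma \ref{Domains}(b) is the correct diagnosis of why the assumption cannot be dropped.
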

\begin{proof}
Define a linear functional $g^\prime$ on $Y^\prime$ by setting $g^\prime(y)=f^\prime (x)$ for all $y\in \mathcal A(x)$ and all $x\in D(\mathcal A)$.
Then $g^\prime$ is defined on $R(\mathcal A)$ and is bounded. To show that $g^\prime$ is indeed bounded, we first note that for $y\in \mathcal A(x)$,
\begin{equation}\label{Bounded-Functional}
\vert g^\prime(y)\vert = \vert f^\prime (x)\vert \leq \Vert f\Vert \Vert x\Vert
\end{equation}
and consider the quotient space $\widetilde{X}:=X\slash N(\mathcal A)$. Let $x_1\in \widetilde{x}$. Then $x-x_1=u$ for some $u\in N(\mathcal A)$ so that $f(x)=f(x_1)$. This equality means that $\Vert x\Vert $ in \eqref{Bounded-Functional} can be replaced with $\Vert x_1\Vert$ for any $x_1\in \widetilde{x}$ without changing the inequality. This therefore means that
\begin{eqnarray*}
\vert g^\prime(y)\vert &\leq& \Vert f^\prime\Vert \Vert \widetilde{x}\Vert \\
&\leq& \Vert f^\prime\Vert \gamma(\mathcal A)^{-1}\Vert \mathcal Ax\Vert\\
&=&\Vert f^\prime\Vert \gamma(\mathcal A)^{-1}\Vert  Q_{\mathcal A}y\Vert \\
&\leq& \Vert f^\prime\Vert \gamma(\mathcal A)^{-1}\Vert Q_{\mathcal A}\Vert \Vert y\Vert,
\end{eqnarray*}
that is, $g^\prime$ is bounded on $R(\mathcal A)$. The Hahn-Banach extension theorem implies that $g^\prime$ can be extended to the whole of $Y^\prime$ without changing its bound.
\end{proof}
\begin{rem}\label{Remark-Null}
Lemma \ref{Contains-Null-Space} above implies that $N(\mathcal A)^\perp \subset R(\mathcal A^\prime)$ and that $N(\mathcal A)^\perp = R(\mathcal A^\prime)$ by Lemma \ref{Null-Space-T-0} $(c)$.
\end{rem}

\begin{lemma}\label{Nu}
Let $\mathcal A, \mathcal B\in CLR(X,Y)$ with $D(\mathcal A)=D(\mathcal B)=X$, $R(\mathcal A)$ closed and $\mathcal B$ bounded. If $\mathcal B(0)\subset \mathcal A(0)$ then
\begin{equation}\label{Equality-M}
M_1^\prime = [\mathcal B(N_1)]^\perp.
\end{equation}
\begin{equation}\label{Equality-V}
\nu(\mathcal A^\prime :\mathcal B^\prime)=\nu(\mathcal A :\mathcal B).
\end{equation}
\end{lemma}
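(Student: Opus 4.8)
The plan is to prove the two assertions in turn: \eqref{Equality-M} by a direct extension argument, and \eqref{Equality-V} by promoting \eqref{Equality-M} to a family of annihilator identities and then dualizing the defining inclusions of $\nu$. For \eqref{Equality-M} the inclusion $M_1^\prime \subset [\mathcal{B}(N_1)]^\perp$ is already the case $n=1$ of \eqref{Adjoint-Sequences}, so only the reverse inclusion is at issue. Given $y^\prime \in [\mathcal{B}(N_1)]^\perp$, I would first note that $0\in N_1=N(\mathcal A)$ forces $\mathcal B(0)\subset \mathcal B(N_1)$, whence $y^\prime\in[\mathcal B(0)]^\perp=D(\mathcal B^\prime)$ by Lemma \ref{Domains}(a) (this is where the boundedness of $\mathcal B$ enters). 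Hence there is an $x^\prime$ with $(y^\prime,x^\prime)\in G(\mathcal B^\prime)$, and by \eqref{Adjoint-2} this $x^\prime$ extends $y^\prime\mathcal B$, i.e.\ $x^\prime(x)=y^\prime(\mathcal B(x))$ for all $x\in X$. Since $y^\prime$ annihilates $\mathcal B(N_1)$ we get $x^\prime(x)=0$ for every $x\in N(\mathcal A)$, so $x^\prime\in N(\mathcal A)^\perp=R(\mathcal A^\prime)=\mathcal A^\prime(Y^\prime)$ by Remark \ref{Remark-Null}. Thus $x^\prime\in\mathcal B^\prime(y^\prime)\cap\mathcal A^\prime(Y^\prime)$, which says exactly that $y^\prime\in(\mathcal B^\prime)^{-1}(\mathcal A^\prime(Y^\prime))=M_1^\prime$, giving equality.

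For \eqref{Equality-V} the idea is to upgrade this to the simultaneous identities
$$ M_n^\prime = [\mathcal B(N_n)]^\perp, \qquad N_n^\prime = [\mathcal A(M_{n-1})]^\perp \qquad (n\geq 1), $$
proved by induction. The base cases are \eqref{Equality-M} together with $N_1^\prime=N(\mathcal A^\prime)=R(\mathcal A)^\perp=[\mathcal A(M_0)]^\perp$ from Lemma \ref{Null-Space-T-0}(a), and the inductive step repeats the extension-and-annihilator argument above, each time combining the inductive hypothesis with Lemma \ref{Domains}(a) and Remark \ref{Remark-Null}. It is worth recording here that $\mathcal A$ is in fact continuous: being a closed relation with full domain on a Banach space, $Q_{\mathcal A}\mathcal A$ is a closed single-valued operator defined on all of $X$, hence bounded by the closed graph theorem, so that $D(\mathcal A^\prime)=\mathcal A(0)^\perp$ and the domain requirements in the induction are met.

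Next I would isolate the purely primal equivalence $N_1\subset M_n \iff \mathcal B(N_1)\subset \mathcal A(M_{n-1})$. Indeed $N_1\subset M_n=\mathcal B^{-1}(\mathcal A(M_{n-1}))$ means each $x\in N_1$ admits some $v\in\mathcal B(x)\cap\mathcal A(M_{n-1})$, and then $\mathcal B(x)=v+\mathcal B(0)\subset\mathcal A(M_{n-1})+\mathcal A(0)=\mathcal A(M_{n-1})$ (using $\mathcal B(0)\subset\mathcal A(0)$ and $0\in M_{n-1}$); conversely, since $D(\mathcal B)=X$, every $x\in N_1$ has a nonempty image $\mathcal B(x)\subset\mathcal A(M_{n-1})$ and so lies in $M_n$. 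Dualizing the inclusion $\mathcal B(N_1)\subset\mathcal A(M_{n-1})$ and invoking the identities above turns it into $N_n^\prime\subset M_1^\prime$, which by Lemma \ref{Equivalent} applied to the pair $(\mathcal A^\prime,\mathcal B^\prime)$ — legitimate because $\mathcal B^\prime(0)\subset\mathcal A^\prime(0)$ by Remark \ref{Contain-Prime} — is equivalent to $N_1^\prime\subset M_n^\prime$. This already yields the implication $N_1\subset M_n\Rightarrow N_1^\prime\subset M_n^\prime$ for every $n$, since annihilators reverse inclusions unconditionally.

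The main obstacle is the reverse implication, needed so that the primal chain $\{N_1\subset M_n\}$ and the dual chain $\{N_1^\prime\subset M_n^\prime\}$ break at the same index and hence $\nu(\mathcal A:\mathcal B)=\nu(\mathcal A^\prime:\mathcal B^\prime)$. Passing back from $[\mathcal A(M_{n-1})]^\perp\subset[\mathcal B(N_1)]^\perp$ via the $\top$-operation only returns $\overline{\mathcal B(N_1)}\subset\overline{\mathcal A(M_{n-1})}$, so to recover the genuine inclusion $\mathcal B(N_1)\subset\mathcal A(M_{n-1})$ one must show that $\mathcal A(M_{n-1})$ is closed. This is precisely where the hypotheses are decisive: closedness of $R(\mathcal A)$ gives $\gamma(\mathcal A)>0$ by Lemma \ref{Lamma-Closed Range}, so $\mathcal A$ is open by Lemma \ref{Norms}(c), and together with the continuity of $\mathcal A$ and $\mathcal B$ this should drive a companion induction showing that each $M_{n-1}$ is closed and that the open map $\mathcal A$ carries it to a closed subspace. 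Carrying out this closedness induction cleanly is the step I expect to demand the most care, and it is the linchpin that makes the annihilator duality reversible.
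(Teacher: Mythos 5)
Your proof of \eqref{Equality-M} is essentially the paper's: both directions use $\mathcal B(0)\subset\mathcal B(N_1)$ to place $y^\prime$ in $D(\mathcal B^\prime)$ via Lemma \ref{Domains}(a), then push the resulting $x^\prime$ into $N(\mathcal A)^\perp=R(\mathcal A^\prime)$ via Remark \ref{Remark-Null}. The first half of your argument for \eqref{Equality-V} (from $N_1\subset M_n$ to $N_1^\prime\subset M_n^\prime$) is also sound and matches the paper, except that you do not actually need the full family of equalities $M_n^\prime=[\mathcal B(N_n)]^\perp$, $N_n^\prime=[\mathcal A(M_{n-1})]^\perp$: the one-sided inclusions \eqref{Adjoint-Sequences} together with the single equality \eqref{Equality-M} already give $N_n^\prime\subset[\mathcal A(M_{n-1})]^\perp\subset[\mathcal B(N_1)]^\perp=M_1^\prime$, which is condition $(n)$ of Lemma \ref{Equivalent} for the dual pair. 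That is fortunate, because the higher-level equalities you posit would themselves require a closed-range condition at each stage and are not established by anything in the paper.

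The genuine gap is in the reverse implication, and it sits exactly where you flagged it. You propose to recover $\mathcal B(N_1)\subset\mathcal A(M_{n-1})$ from the annihilator inclusion by proving inductively that each $\mathcal A(M_{n-1})$ is closed, using that $\mathcal A$ is open and continuous. This induction cannot be carried out in general: an open (even surjective) bounded operator need not map a closed subspace onto a closed subspace — restrict a surjection $T:\ell^2\oplus\ell^2\to\ell^2$, $T(x,y)=x+Ky$ with $K$ compact and injective, to the closed subspace $\{0\}\oplus\ell^2$ — so there is no reason for $\mathcal A(M_1), \mathcal A(M_2),\dots$ to be closed, and consequently no reason for $M_2, M_3,\dots$ to be closed either. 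The paper avoids the issue entirely by exploiting the equivalence of conditions $(1)$ through $(n)$ in Lemma \ref{Equivalent}: instead of trying to reverse the duality at level $n$, it starts from the dual condition $N_1^\prime\subset M_n^\prime$ in the form $[\mathcal A(X)]^\perp=N_1^\prime\subset M_n^\prime\subset[\mathcal B(N_n)]^\perp$, deduces $\mathcal B(N_n)\subset\overline{\mathcal A(X)}=\mathcal A(X)=R(\mathcal A)$ using only the \emph{hypothesis} that $R(\mathcal A)$ is closed, and concludes $N_n\subset\mathcal B^{-1}(\mathcal A(X))=M_1$, which is the primal condition $(n)$ and hence equivalent to $N_1\subset M_n$. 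In short: always dualize at the index where the target is $\mathcal A(M_0)=R(\mathcal A)$; the equivalence lemma then transports the conclusion to every other index for free. Your argument as written does not close, and the missing closedness claims are false in general.
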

\begin{proof}
Let $f^\prime \in [\mathcal B(N_1)]^\perp=(\mathcal B(\mathcal A^{-1}(0)))^\perp$. Since $\mathcal B(0)\subset \mathcal B(N_1)$, Lemma \ref{Norms} $(a)$ together with Lemma \ref{Domains} (a) imply that $f^\prime \in D(\mathcal B^\prime)$. So, let $g^\prime \in \mathcal B^\prime (f^\prime)$, that is, $(f^\prime, g^\prime)\in G(\mathcal B^\prime)$. This means that for $x\in N_1$ and $y\in \mathcal B(x)$, $g^\prime(x)=f^\prime (y)=0$, which shows that $g^\prime\in N_1^\perp=N(\mathcal A)^\perp$ and therefore $g^\prime \in R(\mathcal A^\prime)$  and so $g^\prime \in R(\mathcal A^\prime)$ by Remark \ref{Remark-Null}. It follows that $f^\prime \in {\mathcal B^\prime}^{-1}[\mathcal A^\prime (Y^\prime)]=M_1^\prime$. This shows that $[\mathcal B(N_1)]^\perp \subset M_1^\prime$. Equality \eqref{Equality-M} then follows by \eqref{Adjoint-Sequences}. To prove the second equality, let $v=\nu(\mathcal A:\mathcal B)$. Then $N_1\subset M_n$ for all $n<v$. Since $M_n=\mathcal B^{-1}[\mathcal A(M_{n-1})]$ we see that
\begin{equation}\label{Chain}
\mathcal B(N_1)\subset \mathcal B(M_n)\subset \mathcal A(M_{n-1})+\mathcal B(0)\subset \mathcal A(M_{n-1})+\mathcal A(0)= \mathcal A(M_{n-1}),
\end{equation}
where the last equality follows from the fact that $\mathcal A(0)\subset \mathcal A(M_{n-1})$ and $\mathcal A(M_{n-1})$ is a linear space. We see from \eqref{Chain} that $[\mathcal A(M_{n-1})]^\perp \subset [\mathcal B(N_1)]^\perp$. If then follows from \eqref{Adjoint-Sequences} and \eqref{Equality-M} that $N_n^\prime \subset M_1^\prime$. This means that $v^\prime=(\mathcal A^\prime:\mathcal B^\prime)>n$ and that $v^\prime \geq v$.

To prove the opposite inequality, let $ n<v^\prime$. Then we have $N_1^\prime \subset M_n^\prime$. If follows from Lemma \ref{Null-Space-T-0} $(a)$, \eqref{Null-Space}, and \eqref{Adjoint-Sequences} that $[\mathcal A(X)]^\perp \subset [\mathcal B(N_n)]^\perp$. Since $R(\mathcal A)=\mathcal A(X)$ is closed, this implies that $\mathcal B(N_n) \subset A(X)$. Since $D(\mathcal B)=X$ we see  that $N_n\subset N_n+\mathcal B(0)\subset \mathcal B^{-1}[\mathcal A(X)]=M_1$. This shows that $v>n$ and therefore $v\geq v^\prime$.
\end{proof}

\section{Nullity and Deficiency}
In this section we study the behaviour of the nullity and deficiency for linear relations under some perturbations. For $\mathcal A\in LR(X,Y)$, the nullity $\alpha(\mathcal A)$ and the deficiency $\beta(\mathcal A)$ are defined by
$$
\alpha(\mathcal A):={\rm dim}\; N(\mathcal A) \; \; {\rm and}\; \beta(\mathcal A):={\rm dim}\; Y\slash R(\mathcal A).
$$
\begin{lemma}\label{Equality}{\rm ~\cite[III.7.2]{cross1998multivalued}} \\
Let $\mathcal T$ be a closed linear relation with $\gamma(\mathcal T)>0$. Then $\alpha(\mathcal T^\prime) = \beta(\mathcal T)$.
\end{lemma}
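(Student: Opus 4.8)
The plan is to reduce the claimed equality $\alpha(\mathcal T^\prime)=\beta(\mathcal T)$ to a purely dual-space statement about a single closed subspace, and then invoke the standard identification of an annihilator with the dual of a quotient. First I would record the two structural facts already available. Because $\gamma(\mathcal T)>0$ and $\mathcal T$ is closed, Lemma \ref{Lamma-Closed Range} guarantees that $R(\mathcal T)$ is a \emph{closed} subspace of $Y$; closedness is exactly what makes the subsequent duality clean. Next, Lemma \ref{Null-Space-T-0}$(a)$ gives $N(\mathcal T^\prime)=R(\mathcal T)^\perp$, so that
\[
\alpha(\mathcal T^\prime)={\rm dim}\,N(\mathcal T^\prime)={\rm dim}\,R(\mathcal T)^\perp .
\]
Thus the entire problem becomes: show that for the closed subspace $M:=R(\mathcal T)$ one has ${\rm dim}\,M^\perp={\rm dim}\,(Y/M)=\beta(\mathcal T)$.

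The key step is to produce an isometric isomorphism $M^\perp\cong (Y/M)^\prime$. Let $Q_M\colon Y\to Y/M$ be the quotient map. I would send a functional $\varphi\in(Y/M)^\prime$ to $\varphi\circ Q_M\in Y^\prime$; since $Q_M$ kills $M$, the image vanishes on $M$ and hence lies in $M^\perp$. Conversely, any $f^\prime\in M^\perp$ annihilates $M=\ker Q_M$, so it factors uniquely through $Q_M$ as $f^\prime=\varphi\circ Q_M$ for a unique linear $\varphi$ on $Y/M$; because $M$ is closed the quotient norm is a genuine norm and one checks $\Vert \varphi\Vert=\Vert f^\prime\Vert$, so the correspondence is a norm-preserving linear bijection. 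In particular it is a linear isomorphism of vector spaces, whence ${\rm dim}\,M^\perp={\rm dim}\,(Y/M)^\prime$.

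It then remains to compare ${\rm dim}\,(Y/M)^\prime$ with ${\rm dim}\,(Y/M)$. If $Y/M$ is finite dimensional of dimension $n$, its dual also has dimension $n$; if $Y/M$ is infinite dimensional then so is its dual, and both quantities are $\infty$. Since $\alpha$ and $\beta$ take values in $\mathbb N_0\cup\{\infty\}$, this yields ${\rm dim}\,(Y/M)^\prime={\rm dim}\,(Y/M)=\beta(\mathcal T)$, and combining with the previous paragraph gives $\alpha(\mathcal T^\prime)=\beta(\mathcal T)$. The only genuinely delicate point is the passage from the closed range of $\mathcal T$ to the annihilator duality: one must use closedness of $M$ to ensure the factored functional $\varphi$ is bounded with the correct norm, so that $M^\perp$ and $(Y/M)^\prime$ are identified as normed spaces and not merely as abstract vector spaces. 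Everything else is bookkeeping with the lemmas already established.
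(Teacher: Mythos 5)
Your argument is correct. Note, however, that the paper gives no proof of this lemma at all: it is imported verbatim from \cite[III.7.2]{cross1998multivalued}, so there is no internal argument to compare against — what you have supplied is a self-contained proof of a black-boxed citation. Your route is the standard one and uses only facts already established in the paper: $\gamma(\mathcal T)>0$ plus closedness gives closed range by Lemma \ref{Lamma-Closed Range}; Lemma \ref{Null-Space-T-0}$(a)$ gives $\alpha(\mathcal T^\prime)={\rm dim}\,R(\mathcal T)^\perp$; the isometric identification $M^\perp\cong (Y/M)^\prime$ for the closed subspace $M=R(\mathcal T)$ is classical; and the final comparison ${\rm dim}\,(Y/M)^\prime={\rm dim}\,(Y/M)$ is legitimate because $\alpha$ and $\beta$ take values in $\mathbb N_0\cup\{\infty\}$ (equality of finite dimensions via the algebraic dual, and Hahn--Banach forces the dual of an infinite-dimensional quotient to be infinite-dimensional). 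You also correctly isolate the only place where the hypothesis $\gamma(\mathcal T)>0$ is genuinely needed: without closed range one has only $R(\mathcal T)^\perp=\overline{R(\mathcal T)}^{\,\perp}$, and ${\rm dim}\,Y/\overline{R(\mathcal T)}$ may be strictly smaller than $\beta(\mathcal T)={\rm dim}\,Y/R(\mathcal T)$, so the equality can fail. No gaps.
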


Let $X$ and $Y$ be Banach spaces and let $\mathcal A$ be a closed linear relation with $D(\mathcal A)\subset X$ and $R(\mathcal A)\subset Y$. Let $n\in \left\{ \mathbb N \cap \infty\right\}$ be such that for any $\varepsilon >0$ there exists an $n$-dimensional closed linear subset $N_\varepsilon$ of $N(\mathcal A)$ such that
\begin{equation}\label{Condition}
\Vert \mathcal A(x)\Vert \leq \varepsilon \Vert x\Vert \quad {\rm for\; all}\; x\in N_{\varepsilon}
\end{equation}
while this is not true if $n$ is replaced by a larger number. In such a case we set $\alpha^\prime (\mathcal A):=n$ and define $\beta^\prime(\mathcal A)$ to be
\begin{equation}\label{Beta Prime}
 \beta^\prime(\mathcal A):=\alpha^\prime(\mathcal A^\prime).
\end{equation}

The following two lemmas show that $\alpha^\prime (\mathcal A)$ is defined for every closed linear relation $\mathcal A$.
\begin{lemma}\label{LemmaCondition1}
Assume that for every $\varepsilon>0$ and any closed linear subset $\mathcal M$ of $\mathcal X$ of finite codimension, there is an $x\in \mathcal M \cap D(\mathcal A)$ such that $\Vert x\Vert =1$ and $\Vert \mathcal A(x)\Vert \leq \varepsilon$, then $\alpha^\prime (\mathcal A)=\infty$.
\end{lemma}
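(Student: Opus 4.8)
The plan is to verify the definition of $\alpha^\prime(\mathcal A)$ directly: I will show that for \emph{every} positive integer $n$ and \emph{every} $\varepsilon>0$ there is an $n$-dimensional closed subspace $N_\varepsilon\subset D(\mathcal A)$ on which $\Vert \mathcal A x\Vert\leq \varepsilon\Vert x\Vert$. Since no larger dimension is then excluded for any $\varepsilon$, this forces $\alpha^\prime(\mathcal A)=\infty$. The subspace will be spanned by unit vectors produced one at a time by repeatedly invoking the hypothesis, each time on a suitable finite-codimensional subspace $\mathcal M$; the whole difficulty lies in keeping the norms of arbitrary linear combinations under control.

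The construction is inductive. Fix $n$ and $\varepsilon$. I build unit vectors $x_1,\dots,x_n\in D(\mathcal A)$ together with a biorthogonal system $g_1,\dots,g_n\in X^\prime$, $g_i(x_j)=\delta_{ij}$. At stage $j$ I set $\mathcal M_j:=\bigcap_{i<j}\ker g_i$ (with $\mathcal M_1=X$), which is closed and of finite codimension $\leq j-1$, apply the hypothesis to obtain $x_j\in \mathcal M_j\cap D(\mathcal A)$ with $\Vert x_j\Vert=1$ and $\Vert \mathcal A x_j\Vert\leq \delta_j$ (the $\delta_j>0$ to be pinned down below), and then use the Hahn--Banach theorem to choose $g_j$ vanishing on ${\rm span}(x_1,\dots,x_{j-1})$ with $g_j(x_j)=1$ and $\Vert g_j\Vert=1/{\rm dist}(x_j,{\rm span}(x_1,\dots,x_{j-1}))$. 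Because $x_j\in\ker g_i$ for every $i<j$, the earlier functionals automatically give $g_i(x_j)=0$, so full biorthogonality holds and the $x_j$ are linearly independent; hence $N_\varepsilon:={\rm span}(x_1,\dots,x_n)$ is an $n$-dimensional (therefore closed) subspace of $D(\mathcal A)$.

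The main obstacle is an apparent circularity. Writing $x=\sum_j c_j x_j\in N_\varepsilon$ with $c_j=g_j(x)$ and using that $Q_{\mathcal A}\mathcal A$ is a single-valued linear operator, I get
\begin{equation*}
\Vert \mathcal A x\Vert=\Vert Q_{\mathcal A}\mathcal A x\Vert\leq \sum_{j=1}^{n}\vert c_j\vert\,\Vert \mathcal A x_j\Vert\leq \Big(\sum_{j=1}^{n}\Vert g_j\Vert\,\delta_j\Big)\Vert x\Vert ,
\end{equation*}
so I want $\sum_j\Vert g_j\Vert\,\delta_j\leq \varepsilon$. This looks to require $\delta_j$ small relative to $\Vert g_j\Vert$, yet $\Vert g_j\Vert$ is only known after $x_j$ (hence $\delta_j$) has been used. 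The key step that breaks the circularity is an \emph{a priori} bound on $\Vert g_j\Vert$ independent of the $\delta$'s: since $g_i(x_j)=0$ for $i<j$, a short estimate (writing $v=\sum_{i<j}a_ix_i$ and using $|a_i|\leq\Vert g_i\Vert\,\Vert x_j-v\Vert$ together with $\Vert x_j-v\Vert\geq 1-\Vert v\Vert$) gives ${\rm dist}(x_j,{\rm span}(x_1,\dots,x_{j-1}))\geq (1+\sum_{i<j}\Vert g_i\Vert)^{-1}$, whence $\Vert g_j\Vert\leq 1+\sum_{i<j}\Vert g_i\Vert$ and, inductively, $\Vert g_j\Vert\leq 2^{j-1}$.

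With this bound the argument closes cleanly: I fix $\delta_j:=\varepsilon\,2^{-(j-1)}/n$ from the outset (any choice with $\sum_j 2^{j-1}\delta_j\leq\varepsilon$ works), run the induction, and obtain $\sum_j\Vert g_j\Vert\,\delta_j\leq \sum_j 2^{j-1}\delta_j\leq \varepsilon$, so $\Vert \mathcal A x\Vert\leq \varepsilon\Vert x\Vert$ on $N_\varepsilon$. As $n$ and $\varepsilon$ are arbitrary, $\alpha^\prime(\mathcal A)=\infty$. The only points demanding genuine care are that each $\mathcal M_j$ really has finite codimension (so the hypothesis applies), the distance estimate that delivers $\Vert g_j\Vert\leq 2^{j-1}$ independently of the $\delta_j$, and the linearity and triangle inequality for $\Vert \mathcal A\,\cdot\Vert=\Vert Q_{\mathcal A}\mathcal A\,\cdot\Vert$ on the subspace $D(\mathcal A)$.
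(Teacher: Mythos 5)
Your finite-stage construction is sound and is essentially the paper's own mechanism: you select near-null unit vectors one at a time inside the finite-codimensional intersections of the kernels of previously chosen functionals, and you control the coefficients of an arbitrary linear combination through a triangular functional system, with the geometric growth $2^{j-1}$ compensated by geometrically decaying tolerances $\delta_j$. (The paper takes norm-one functionals $f_j$ norming $x_j$ and pushes the $2^{j-1}$ growth onto the coefficients $\xi_j$ rather than onto $\Vert g_j\Vert$; that is only a bookkeeping difference, and your a priori estimate $\Vert g_j\Vert\le 2^{j-1}$ is correct.)

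The gap is in the final step. Under the paper's definition, $\alpha^\prime(\mathcal A)=\infty$ means that for every $\varepsilon>0$ there exists an \emph{infinite-dimensional closed} linear subset $N_\varepsilon\subset D(\mathcal A)$ on which $\Vert \mathcal A(x)\Vert\le\varepsilon\Vert x\Vert$. Producing, for each finite $n$, an $n$-dimensional subspace shows only that $\alpha^\prime(\mathcal A)$ cannot equal any finite value; it does not verify the defining condition for the value $\infty$. Moreover your choice $\delta_j=\varepsilon\,2^{-(j-1)}/n$ ties the tolerances to $n$, so the construction cannot simply be continued indefinitely. The repair is exactly what the paper does: choose $\delta_j$ independent of $n$ and summable against $2^{j-1}$ (the paper uses $\delta_j=3^{-j}\varepsilon$), run the induction forever to obtain an infinite-dimensional span $M_\varepsilon^\prime\subset D(\mathcal A)$ on which the bound holds, and then pass to the closure $\overline{M_\varepsilon^\prime}$: for $u\in\overline{M_\varepsilon^\prime}$ one uses the boundedness of $Q_{\mathcal A}\mathcal A$ on $M_\varepsilon^\prime$ together with the \emph{closedness of $\mathcal A$} to conclude that $u\in D(\mathcal A)$ and that the estimate $\Vert \mathcal A(u)\Vert\le\varepsilon\Vert u\Vert$ persists on the closure. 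Note that your argument never invokes the closedness of $\mathcal A$ at all; that hypothesis is needed precisely at this closure step, and its absence from your proof is the symptom of the missing passage from arbitrarily large finite dimension to a genuine infinite-dimensional closed $N_\varepsilon$.
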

\begin{proof}
We have to show that for each $\varepsilon >0$, there exists an infinite dimensional closed linear subset $N_{\varepsilon}\subset D(\mathcal A)$ with the property \eqref{Condition}. First we construct two sequences $x_n\in D(\mathcal A)$ and $f_n\in X'$ such that
\begin{eqnarray}\label{Induction}
&& \Vert x_n\Vert =1, \Vert f_n\Vert =1, f_n(x_n)=1, \nonumber \\
&& f_k(x_n)=0, \; k=1, 2, \cdots , n-1,\\
&& \Vert \mathcal A(x_n)\Vert \leq 3^{-n}\varepsilon,\;  n\in \mathbb N. \nonumber
\end{eqnarray}
For $n=1$, the result holds by ~\cite[III-Corollary 1.24]{kato2013perturbation}. Suppose that $x_n, f_k$ have been constructed for $k=1, 2, \cdots, n-1$. Then $x_n$ and $f_n$ can be constructed in he following way. Let $ M\subset X$ be the collection of all $x\in
X$ such that $f_k(x)=0$, $k=1, 2, \cdots, n-1$. Since $M$ is a closed linear subset of $X$ with finite codimension (dim $M^\perp \leq n-1$ and use codim $M$=dim $M^\perp$), there is an $x_n\in M\cap D(\mathcal A)$ such that $\Vert x_n\Vert =1$ and $\Vert A(x_n)\Vert \leq 3^{-n}\varepsilon$. For this $x_n$, there exists an $f_n\in \mathcal X'$ such that $\Vert f_n\Vert =1$ and $f_n(x_n)=1$ (see ~\cite[III-Corollary 1.24]{kato2013perturbation}. It follows from \eqref{Induction} that the $x_n$ are linearly independent so that
$M_{\varepsilon}^\prime:={\rm span}\;\left\{ x_1, x_2, \cdots \right\}$ is infinite dimensional. Each $x\in M_{\varepsilon}^\prime$ has the form
\begin{equation}\label{form1}
x=\xi_1x_1+\xi_2x_2+\cdots +\xi_nx_n
\end{equation}
for some positive integer $n$. Hence for $k=1, 2, \cdots, n$,
\begin{equation}\label{form2}
f_k(x)=\xi_1f_k{x_1}+\xi_2f_k(x_2)+\cdots +\xi_{k-1}f_k(x_{k-1})+\xi_k.
\end{equation}

We show that the coefficients $\xi_k$ satisfy the inequality
\begin{equation}\label{coefficients}
\vert \xi_k\vert \leq 2^{k-1}\Vert x\Vert, \quad k=1,2, \cdots, n.
\end{equation}
For $k=1$, this is clear from \eqref{Induction} and \eqref{form2}. If we assume that \eqref{coefficients} has been proved for $k<j$, we see from \eqref{form2} that
\begin{eqnarray*}
\vert \xi_j\vert &\leq& \vert f_j(x)\vert+\vert \xi_1\vert \vert f_j(x_1)\vert +\cdots + \vert \xi_{j-1}\vert \vert f_j(x_{j-1})\vert\\
&\leq& \Vert x\Vert + \vert \xi_1\vert +\vert \xi_2\vert +\cdots + \vert \xi_{j-1}\vert \\
&\leq& \Vert x\Vert + \Vert x\Vert + 2\Vert x\Vert + \cdots + 2^{j-2}\Vert x\Vert \\
&=& \Vert x\Vert \left[ 2+2\left(1+2+2^2+\cdots+2^{j-1}\right) \right] \\
&=& 2^{j-1}\Vert x\Vert.
\end{eqnarray*}
It follows from \eqref{Induction}, \eqref{Condition} and \eqref{Condition} that
\begin{eqnarray*}
\Vert \mathcal A(x)\Vert &\leq& \vert\xi_1\vert \Vert \mathcal Ax_1\Vert +\cdots+\vert \xi_n\vert \Vert \mathcal Ax_n\Vert\\
&\leq& \left( \frac{1}{3}+\frac{2}{3^2} + \frac{2^2}{3^3}+\cdots+\frac{2^{n-1}}{3^n} \right)\varepsilon\Vert x\Vert\\
&\leq& \varepsilon\Vert x\Vert.
\end{eqnarray*}
Let $u\in \overline{\mathcal M}_{\varepsilon}^\prime$ and let $\{u_n\}$ be a sequence in $\mathcal M_{\varepsilon}^\prime$ such that $u_n\to u$. The boundedness of $Q_{\mathcal A}\mathcal A$ on $M_{\varepsilon}^\prime$ implies that $\{Q_{\mathcal A}\mathcal A(x_n)\}$ is a Cauchy sequence in $\widetilde{\mathcal Y}:=Y\slash \mathcal A(0)$ and therefore converges, say $Q_{\mathcal A}\mathcal A(x_n)\to \widetilde{v}\in \widetilde{\mathcal Y}$. This means that dist $(x_n-v, \mathcal A(0))\to 0$ as $n\to \infty$, that is, $x_n-v\to z\in \mathcal A(0)$ for some $z\in \mathcal A(0)$. In other words, $x_n\to v+z=w\in \widetilde{v}$. The closedeness of $\mathcal A$ implies that $x\in D(\mathcal A)$ and $w\in \mathcal A(x)$. Hence $Q_{\mathcal A}\mathcal A$ is defined and bounded on the closure of $M_{\varepsilon}^\prime$ with the same bound.
\end{proof}
\begin{lemma}\label{Closed Closed Range}
If $\mathcal A$ is a closed linear relation with closed range(that is, $\gamma(\mathcal A)>0$) then $\alpha^\prime(\mathcal A)=\alpha(\mathcal A)$ and $\beta^\prime(\mathcal A)=\beta(\mathcal A)$.
\end{lemma}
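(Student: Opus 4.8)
The plan is to prove the two equalities separately, obtaining the statement about $\beta'$ from the one about $\alpha'$ by passing to the adjoint. Throughout I use that $\mathcal A$ closed makes $N(\mathcal A)$ closed, so that the quotient $\widetilde X = X/N(\mathcal A)$ and the distance characterization \eqref{lower bound 2} of $\gamma(\mathcal A)$ are available.

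First I would establish $\alpha'(\mathcal A)=\alpha(\mathcal A)$. The inequality $\alpha'(\mathcal A)\geq \alpha(\mathcal A)$ needs no hypothesis: for $x\in N(\mathcal A)$ one has $\mathcal A(x)=\mathcal A(0)$, hence $\Vert \mathcal A(x)\Vert=\Vert Q_{\mathcal A}\mathcal A(x)\Vert=0$, so every finite-dimensional (thus closed) subspace of $N(\mathcal A)$, and $N(\mathcal A)$ itself when $\alpha(\mathcal A)<\infty$, satisfies \eqref{Condition} for every $\varepsilon>0$. For the reverse inequality I would invoke $\gamma:=\gamma(\mathcal A)>0$. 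Fix $\varepsilon$ with $0<\varepsilon<\gamma$ and let $N_\varepsilon\subset D(\mathcal A)$ be any closed subspace satisfying \eqref{Condition}. For nonzero $x\in N_\varepsilon$, combining \eqref{Condition} with \eqref{lower bound 2} gives
$$
\gamma\,{\rm dist}(x,N(\mathcal A))\leq \Vert \mathcal A(x)\Vert \leq \varepsilon\Vert x\Vert,
$$
so that $\Vert \widetilde x\Vert={\rm dist}(x,N(\mathcal A))\leq(\varepsilon/\gamma)\Vert x\Vert<\Vert x\Vert$. Lemma \ref{Dimension} applied with $M=N_\varepsilon$ and $N=N(\mathcal A)$ then yields $\dim N_\varepsilon\leq \dim N(\mathcal A)=\alpha(\mathcal A)$. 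Since this bounds the dimension of every admissible $N_\varepsilon$ as soon as $\varepsilon<\gamma$, it forces $\alpha'(\mathcal A)\leq \alpha(\mathcal A)$, and the two inequalities combine to the desired equality (the case $\alpha(\mathcal A)=\infty$ being covered by the first inequality alone).

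For the second equality I would use \eqref{Beta Prime}, namely $\beta'(\mathcal A)=\alpha'(\mathcal A^\prime)$. Since $\gamma(\mathcal A)>0$, Lemma \ref{Norms}(c) shows $\mathcal A$ is open, whence Lemma \ref{Domains}(d) gives $\gamma(\mathcal A^\prime)=\gamma(\mathcal A)>0$; moreover $\mathcal A^\prime\in CLR(Y^\prime,X^\prime)$ is closed. Thus the first equality applies verbatim to $\mathcal A^\prime$ and gives $\alpha'(\mathcal A^\prime)=\alpha(\mathcal A^\prime)$. Finally, Lemma \ref{Equality} (available because $\gamma(\mathcal A)>0$) gives $\alpha(\mathcal A^\prime)=\beta(\mathcal A)$. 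Chaining these, $\beta'(\mathcal A)=\alpha'(\mathcal A^\prime)=\alpha(\mathcal A^\prime)=\beta(\mathcal A)$.

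The main obstacle is the reverse inequality $\alpha'(\mathcal A)\leq \alpha(\mathcal A)$: the rest is bookkeeping with the duality lemmas, but this is where the closed-range hypothesis genuinely enters, through the strict gain $\varepsilon/\gamma<1$ that converts the approximate-kernel condition \eqref{Condition} into the dimension comparison of Lemma \ref{Dimension}. One should also take care that $N_\varepsilon$ is to be taken inside $D(\mathcal A)$, so that $\mathcal A(x)$ and the lower bound \eqref{lower bound 2} are meaningful on it, and note that restricting to $\varepsilon<\gamma$ is legitimate since $\alpha'$ is defined via the existence of a suitable $N_\varepsilon$ for \emph{every} $\varepsilon>0$, so controlling dimensions for all sufficiently small $\varepsilon$ already pins down $\alpha'(\mathcal A)$.
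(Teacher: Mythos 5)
Your proposal is correct and follows essentially the same route as the paper: the easy inequality $\alpha'(\mathcal A)\geq\alpha(\mathcal A)$, the reverse inequality via $\gamma(\mathcal A)\,\Vert\widetilde x\Vert\leq\Vert\mathcal A(x)\Vert\leq\varepsilon\Vert x\Vert$ for $\varepsilon<\gamma(\mathcal A)$ combined with the dimension-comparison lemma (you use Lemma \ref{Dimension} directly where the paper argues by contradiction with its contrapositive form), and the $\beta'$ statement by passing to $\mathcal A'$ using $\gamma(\mathcal A')=\gamma(\mathcal A)>0$ and Lemma \ref{Equality}. Your version is, if anything, slightly more explicit than the paper's about why the first equality applies verbatim to $\mathcal A'$.
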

\begin{proof}
By Lemma \ref{Norms}, $\gamma(\mathcal A)>0$ implies $\gamma(\mathcal A^\prime)>0$ while Lemma \ref{Equality} implies that  $\alpha(\mathcal A^\prime) = \beta(\mathcal A)$. In view of \eqref{Beta Prime}, it is enough to show that $\alpha^\prime(\mathcal A)=\alpha(\mathcal A)$. It is clear that $\alpha^\prime(\mathcal A)\geq\alpha(\mathcal A)$. Now suppose that there exists a closed linear manifold $N_\varepsilon$ with ${\rm dim} N_\varepsilon > \alpha(\mathcal A)= {\rm dim} N(\mathcal A)$ and with property \eqref{Condition}. Pick $x\in N_{\varepsilon}$ such that $\Vert \widetilde x\Vert =\Vert x\Vert=1$ where $\widetilde x\in \widetilde{X}=: X\slash N(\mathcal A)$ (this is possible by ~\cite[Lemma 241]{kato1958perturbation}). For this $x$, $\Vert \mathcal A(x)\Vert \geq \gamma(\mathcal A)$ on the one hand and $\Vert \mathcal A(x)\Vert \leq \varepsilon$ on the other hand, leading to the inequality $\gamma(\mathcal A)\leq \varepsilon$. In other words, there is no $N_\varepsilon$ with ${\rm dim} N_\varepsilon > \alpha(\mathcal A)= {\rm dim} N(\mathcal A)$ for $\varepsilon < \gamma(\mathcal A)$. This proves that $\alpha^\prime(\mathcal A)\leq \alpha(\mathcal A)$ and that $\alpha^\prime(\mathcal A)= \alpha(\mathcal A)$. The second equality follows from \eqref{Beta Prime} and Lemma \ref{Equality}.
\end{proof}
%
\begin{lemma}\label{Infinity}
Let $\mathcal T\in CLR(X)$ with non closed range (that is, $\gamma(\mathcal T)=0$), then
\begin{equation}\label{Nonclosed}
\alpha^\prime(\mathcal T)=\infty.
\end{equation}
\end{lemma}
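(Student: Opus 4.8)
The plan is to deduce everything from Lemma \ref{LemmaCondition1}, so the entire task reduces to verifying that hypothesis for a relation with $\gamma(\mathcal T)=0$: for every $\varepsilon>0$ and every closed subspace $\mathcal M\subseteq X$ of finite codimension there is a unit vector $x\in \mathcal M\cap D(\mathcal T)$ with $\Vert\mathcal T(x)\Vert\le\varepsilon$. Once this is in hand, Lemma \ref{LemmaCondition1} yields $\alpha^\prime(\mathcal T)=\infty$ immediately. I would prove the condition by contradiction: if it fails, there are $\varepsilon_0>0$ and a closed finite-codimensional $\mathcal M_0$ with $\Vert\mathcal T(x)\Vert\ge\varepsilon_0\Vert x\Vert$ for all $x\in \mathcal M_0\cap D(\mathcal T)$, i.e. $\mathcal T$ is bounded below on $\mathcal M_0\cap D(\mathcal T)$.

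On the other hand, $\gamma(\mathcal T)=0$ means (Lemma \ref{Infimum}, together with the characterization \eqref{lower bound 2}) that the relevant infimum vanishes, so there are $x_n\in D(\mathcal T)\setminus N(\mathcal T)$ with $\Vert\mathcal T(x_n)\Vert/{\rm dist}(x_n,N(\mathcal T))\to 0$. After scaling so that ${\rm dist}(x_n,N(\mathcal T))=1$ and then replacing $x_n$ by a representative $x_n-\nu_n$, $\nu_n\in N(\mathcal T)$, with $\Vert x_n-\nu_n\Vert\le 1+1/n$ — which leaves $\Vert\mathcal T(x_n)\Vert$ unchanged, since $Q_{\mathcal T}\mathcal T$ annihilates $N(\mathcal T)$ — I obtain a bounded sequence $x_n\in D(\mathcal T)$ with ${\rm dist}(x_n,N(\mathcal T))=1$ and $\Vert\mathcal T(x_n)\Vert\to 0$.

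I would then split $x_n$ against $\mathcal M_0$. Since $\mathcal M_0\cap D(\mathcal T)$ is closed and of finite codimension in $D(\mathcal T)$, there is a finite-dimensional $G\subseteq D(\mathcal T)$ with $D(\mathcal T)=(\mathcal M_0\cap D(\mathcal T))\oplus G$ and with bounded projections (the projection onto $G$ factors through the finite-dimensional quotient $D(\mathcal T)/(\mathcal M_0\cap D(\mathcal T))$). Writing $x_n=a_n+b_n$ with $a_n\in\mathcal M_0\cap D(\mathcal T)$ and $b_n\in G$, boundedness of $\{x_n\}$ forces $\{b_n\}$ to be bounded, so a subsequence gives $b_n\to b\in G\subseteq D(\mathcal T)$. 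As $Q_{\mathcal T}\mathcal T$ is a closed single-valued operator (Lemma \ref{Equivalent-Closed}), its restriction to the finite-dimensional $G$ is continuous, so $Q_{\mathcal T}\mathcal T(b_n)\to Q_{\mathcal T}\mathcal T(b)$; together with $Q_{\mathcal T}\mathcal T(x_n)\to 0$ this gives $Q_{\mathcal T}\mathcal T(a_n)\to -Q_{\mathcal T}\mathcal T(b)$. The lower bound on $\mathcal M_0$ converts the Cauchyness of $\{Q_{\mathcal T}\mathcal T(a_n)\}$ into Cauchyness of $\{a_n\}$ in $X$, so $a_n\to a$, and closedness of $Q_{\mathcal T}\mathcal T$ gives $a\in D(\mathcal T)$ with $Q_{\mathcal T}\mathcal T(a)=-Q_{\mathcal T}\mathcal T(b)$. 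Setting $x_\ast=a+b$ I get $x_\ast\in D(\mathcal T)$ with ${\rm dist}(x_\ast,N(\mathcal T))=\lim{\rm dist}(x_n,N(\mathcal T))=1$, yet $Q_{\mathcal T}\mathcal T(x_\ast)=0$; since $\mathcal T(0)$ is closed this forces $0\in\mathcal T(x_\ast)$ (Theorem \ref{Image}), i.e. $x_\ast\in N(\mathcal T)$, contradicting ${\rm dist}(x_\ast,N(\mathcal T))=1$. Hence the hypothesis of Lemma \ref{LemmaCondition1} holds and $\alpha^\prime(\mathcal T)=\infty$.

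The one genuinely delicate point is the domain bookkeeping: $D(\mathcal T)$ need not be closed, so the splitting $x_n=a_n+b_n$ and every limiting argument must be carried out inside $D(\mathcal T)$ with its relative norm, and I must lean on the closedness of $Q_{\mathcal T}\mathcal T$ — rather than on any completeness of $D(\mathcal T)$ — to recover $a\in D(\mathcal T)$. Everything else is routine: the scaling to unit quotient norm, the finite-dimensional compactness producing $b_n\to b$, and the conversion of a convergent image sequence into a convergent source sequence via the lower bound. I would also remark that no separate treatment of the case $\dim N(\mathcal T)=\infty$ is required, since the contradiction argument is insensitive to the dimension of the null space.
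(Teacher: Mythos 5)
Your proof is correct, but it reaches the hypothesis of Lemma \ref{LemmaCondition1} by a genuinely different route from the paper. The paper argues directly that every closed finite\nobreakdash-codimensional $M$ already works: it forms the finite-dimensional quotient $Q\mathcal T(X)\slash Q\mathcal T(M)$, invokes Kato's lemma that a linear manifold containing a closed subspace of finite codimension is itself closed, concludes that closedness of $Q\mathcal T(M)$ would force $R(\mathcal T)$ to be closed, and hence that the restriction of $\mathcal T$ to $M$ has lower bound zero by Lemma \ref{Lamma-Closed Range}, which supplies the required almost-null unit vectors in $M\cap D(\mathcal T)$. You instead argue by contradiction: a lower bound $\Vert\mathcal T(x)\Vert\geq\varepsilon_0\Vert x\Vert$ on $\mathcal M_0\cap D(\mathcal T)$, combined with a minimizing sequence for $\gamma(\mathcal T)$ normalized to unit distance from $N(\mathcal T)$, the bounded splitting $x_n=a_n+b_n$ against a finite-dimensional complement, compactness in $G$, and the closedness of $Q_{\mathcal T}\mathcal T$, produces a point of $N(\mathcal T)$ at distance $1$ from $N(\mathcal T)$, which is absurd; this is in effect the classical ``bounded below on a finite-codimensional subspace implies closed range'' argument run in contrapositive. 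Your version is self-contained (it avoids the restricted relation, the quotient-of-quotients, and the citation of Kato's III-Lemma 1.9) at the price of the projection and subsequence bookkeeping, and you handle the genuinely delicate points correctly: the relative closedness of $\mathcal M_0\cap D(\mathcal T)$ in $D(\mathcal T)$ making the projection onto $G$ bounded, the reliance on closedness of $Q_{\mathcal T}\mathcal T$ rather than on any completeness of $D(\mathcal T)$, and the passage from $Q_{\mathcal T}\mathcal T(x_\ast)=\widehat 0$ to $x_\ast\in N(\mathcal T)$ via the closedness of $\mathcal T(0)$. The paper's route is shorter and yields the slightly stronger structural fact that $\mathcal T(M)$ is non-closed for every such $M$; yours trades that for elementarity. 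Both are valid.
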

\begin{proof}
Let $M$ be any closed linear manifold of $X$ with finite codimension and let $Q_{\mathcal T}$ be denoted by $Q$. Consider the mapping $T:X\slash M\to Q\mathcal T(X)\slash Q\mathcal T(M)$ defines by setting $T(\widetilde x)=\widetilde{Q\mathcal T(x)}$. Then $T$ is clearly well defined and linear. It is well defined since
$$
T(\widetilde{x+v})=\widetilde{Q\mathcal T(x+v)}=\widetilde{Q\mathcal T(x)+Q\mathcal T(v)} = \widetilde{Q\mathcal T(x)}=T\widetilde{x}
$$
for any $v\in M$. If follows that $Q\mathcal T(X)\slash Q\mathcal T(M)$ is a finite dimensional space since $M$ has finite codimension. ~\cite[III-Lemma 1.9]{kato2013perturbation} implies that $Q\mathcal T(X)$ is a closed subset of $\widehat{Y}:=Y\slash \mathcal T(0)$ if $Q\mathcal T(M)$ is a closed subspace of the same space. This would mean that $\mathcal T(X)$ is a closed subset of $Y$. To see why this is true, let $\{y_n\}$ be a convergent sequence in $\mathcal T(X)$ with $y_n\to y\in Y$. Then $\{Qy_n\}$ is a Cauchy sequence in $\widehat{Y}$ and therefore converges to some point $\widehat z\in Q\mathcal T(X)$. In other words, $y_n-z\to w\in \mathcal T(0)$, so that $y_n\to z+w\in \widehat z$. The uniqueness of the limit implies that $y=z+w\in \widehat{z}$ and that $y\in R(\mathcal T)$ since $z\in R(\mathcal T)$ and every coset that contains and element of $R(\mathcal T)$ consists entirely of elements of $R(\mathcal T)$. Next we show that if $\mathcal T(M)$ is closed then $Q\mathcal T(M)$ is closed. So, assume that $\mathcal T(M)$ is closed and let $\{ \widehat z\}$ be a sequence in $Q\mathcal T(M)$ that converges to an element $\widehat{z}\in \widehat{Y}$. Then $z_n-z\to v\in \mathcal T(0)$ and so $z_n\to z+v\in \widehat{z}$. The closedness of $\mathcal T(M)$ implies that $z+v\in
\mathcal T(M)$ and that $\widehat{z}\in Q\mathcal T(M)$.

The contradiction that $\mathcal T(X)$ is both open and closed means that $Q\mathcal T(M)$ is not closed and that $\mathcal T(M)$ is not closed and therefore $\gamma\left( \mathcal T_M\right)=0$. Hence there exists, for any $\varepsilon >0$, an $x\in M\cap D(\mathcal T)$ such that $\Vert x\Vert =1$ and $\Vert \mathcal T(x)\Vert \leq \varepsilon \Vert \widetilde x\Vert \leq \varepsilon\Vert x\Vert=\varepsilon$  where $\widetilde x\in \widetilde X=X\slash N(\mathcal T)$. This shows that the conditions of Lemma \ref{LemmaCondition1} are satisfied and therefore $\alpha^\prime(\mathcal T)=\infty$.
\end{proof}
\begin{theorem}\label{Complete-1}
Let $X$ and $Y$ be Banach spaces and let $\mathcal A$ be a closed linear relation with $D(\mathcal A)\subset X$, having closed range $R(\mathcal A)\subset Y$, and with $\alpha(\mathcal A)$ finite. Let $\mathcal B$ be a closed bounded linear relation such that $D(\mathcal B)\supset D(\mathcal A)$, $\mathcal B(0)\subset \mathcal A(0)$ and
\begin{equation}\label{Bounded}
\Vert \mathcal B\Vert < \gamma(\mathcal A).
\end{equation}
Then the linear relation $\mathcal A+\mathcal B$ is closed and has closed range. Moreover,
\begin{equation}\label{Inequalities}
\alpha(\mathcal A+\mathcal B)\leq \alpha(\mathcal A), \quad \beta(\mathcal A +\mathcal B)\leq \beta(\mathcal A).
\end{equation}
\end{theorem}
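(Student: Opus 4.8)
The plan is to settle the structural facts, prove closedness, obtain closed range through the approximate nullity $\alpha'$, and finally read off the two dimension inequalities, the second by duality. Since $\mathcal B(0)\subset\mathcal A(0)$ and $\mathcal A(0)$ is a subspace, $(\mathcal A+\mathcal B)(0)=\mathcal A(0)$, which is closed; hence $Q_{\mathcal A+\mathcal B}=Q_{\mathcal A}$ and, because $D(\mathcal B)\supset D(\mathcal A)$, also $D(\mathcal A+\mathcal B)=D(\mathcal A)$. To prove closedness I would invoke Lemma \ref{Equivalent-Closed} and check that the single valued operator $Q_{\mathcal A}(\mathcal A+\mathcal B)=Q_{\mathcal A}\mathcal A+Q_{\mathcal A}\mathcal B$ is closed. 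The summand $Q_{\mathcal A}\mathcal A$ is closed by Lemma \ref{Equivalent-Closed} (equivalently Lemma \ref{Lemma 1}), while $Q_{\mathcal A}\mathcal B$ is single valued and bounded, since $\mathcal B(0)\subset\mathcal A(0)$ gives $\Vert Q_{\mathcal A}\mathcal Bx\Vert\le\Vert\mathcal Bx\Vert\le\Vert\mathcal B\Vert\,\Vert x\Vert$. A closed operator plus a bounded operator on a larger domain is closed: along a graph convergent sequence the bounded summand converges (its domain is closed by Remark \ref{remark} and contains the limit), so the closed summand converges, forcing the limit into $D(\mathcal A)$, after which continuity identifies the image. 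This step is routine once the domain bookkeeping is in place.

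The arithmetic core is a single pointwise estimate. For $x\in D(\mathcal A)$, Lemma \ref{Norm Difference} together with the characterization \eqref{lower bound 2} gives
$$\Vert\mathcal Ax\Vert\le\Vert(\mathcal A+\mathcal B)x\Vert+\Vert\mathcal B\Vert\,\Vert x\Vert\;\;{\rm and}\;\;\Vert\mathcal Ax\Vert\ge\gamma(\mathcal A)\,{\rm dist}(x,N(\mathcal A)).$$
Next I would bound $\alpha'(\mathcal A+\mathcal B)$. Fix $\varepsilon$ with $0<\varepsilon<\gamma(\mathcal A)-\Vert\mathcal B\Vert$ and let $L\subset D(\mathcal A)$ be a closed subspace on which \eqref{Condition} holds for $\mathcal A+\mathcal B$, that is $\Vert(\mathcal A+\mathcal B)x\Vert\le\varepsilon\Vert x\Vert$. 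Combining the two inequalities yields ${\rm dist}(x,N(\mathcal A))\le\frac{\varepsilon+\Vert\mathcal B\Vert}{\gamma(\mathcal A)}\Vert x\Vert$ with ratio strictly below $1$, so $\Vert\widetilde x\Vert<\Vert x\Vert$ for every nonzero $x\in L$, where $\widetilde x\in X/N(\mathcal A)$; Lemma \ref{Dimension} then forces $\dim L\le\alpha(\mathcal A)$. Thus for this $\varepsilon$ no admissible subspace of dimension exceeding $\alpha(\mathcal A)$ exists, i.e. $\alpha'(\mathcal A+\mathcal B)\le\alpha(\mathcal A)<\infty$.

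From here everything is mechanical. As $\mathcal A+\mathcal B$ is closed, the contrapositive of Lemma \ref{Infinity} upgrades $\alpha'(\mathcal A+\mathcal B)<\infty$ to $\gamma(\mathcal A+\mathcal B)>0$, so $R(\mathcal A+\mathcal B)$ is closed by Lemma \ref{Lamma-Closed Range}; Lemma \ref{Closed Closed Range} then gives $\alpha(\mathcal A+\mathcal B)=\alpha'(\mathcal A+\mathcal B)\le\alpha(\mathcal A)$, the first inequality in \eqref{Inequalities}. For the deficiency I would dualize: $\mathcal A'$ is closed with $\gamma(\mathcal A')=\gamma(\mathcal A)>0$ (Lemma \ref{Norms}(c) and Lemma \ref{Domains}(d)), $\mathcal B'$ is bounded with $\Vert\mathcal B'\Vert=\Vert\mathcal B\Vert$ (Lemma \ref{Domains}(c)), $D(\mathcal A')\subset\mathcal A(0)^\perp\subset\mathcal B(0)^\perp=D(\mathcal B')$ (directly from \eqref{Adjoint} and Lemma \ref{Domains}(a)), $\mathcal B'(0)\subset\mathcal A'(0)$ (Remark \ref{Contain-Prime}), and $(\mathcal A+\mathcal B)'=\mathcal A'+\mathcal B'$ (Lemma \ref{Norms}(d)). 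The displayed estimate applied to the pair $(\mathcal A',\mathcal B')$ gives $\alpha'(\mathcal A'+\mathcal B')\le\alpha(\mathcal A')$; since $(\mathcal A+\mathcal B)'=\mathcal A'+\mathcal B'$ is closed with closed range ($\gamma$ positive by Lemma \ref{Domains}(d)), Lemma \ref{Closed Closed Range} together with Lemma \ref{Equality} yields $\beta(\mathcal A+\mathcal B)=\alpha((\mathcal A+\mathcal B)')=\alpha'(\mathcal A'+\mathcal B')\le\alpha(\mathcal A')=\beta(\mathcal A)$.

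The hard part will be the closed range claim. Attacking $\gamma(\mathcal A+\mathcal B)$ head on runs into a mismatch: the lower bound estimate is expressed through ${\rm dist}(\cdot,N(\mathcal A))$, whereas $\gamma(\mathcal A+\mathcal B)$ measures distance to the in general different subspace $N(\mathcal A+\mathcal B)$, and reconciling the two would require the gap estimate of Lemma \ref{Extra} (which degenerates precisely when $\dim N(\mathcal A+\mathcal B)<\dim N(\mathcal A)$). Routing the argument through $\alpha'$ and Lemma \ref{Infinity} sidesteps this obstruction entirely, which is why I would organize the proof around the approximate nullity rather than a direct lower bound for $\gamma$.
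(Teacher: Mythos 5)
Your proposal is correct and follows essentially the same route as the paper: the same pointwise estimate via Lemma \ref{Norm Difference} and $\gamma(\mathcal A)$ to bound $\alpha^\prime(\mathcal A+\mathcal B)$ through Lemma \ref{Dimension}, the same use of Lemma \ref{Infinity} and Lemma \ref{Closed Closed Range} to obtain closed range and the nullity inequality, and the same dualization via Lemmas \ref{Norms}, \ref{Domains} and \ref{Equality} for the deficiency. The only cosmetic difference is that you prove closedness of $\mathcal A+\mathcal B$ by reducing to the single valued operator $Q_{\mathcal A}(\mathcal A+\mathcal B)$ via Lemma \ref{Equivalent-Closed}, whereas the paper runs the equivalent sequence argument directly on the relation.
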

\begin{proof}
Let $\left\{ x_n\right\}$ be a sequence in $D(\mathcal A)$ such that $x_n\to x\in X$ and let $\left\{ y_n\right\}$ be a sequence in $R(\mathcal A+\mathcal B)$ such that $y_n\to y\in Y$, where $y_n=u_n+v_n$ with $u_n\in \mathcal A(x_n)$ and $v_n\in \mathcal B(x_n)$ for each $n\in \mathbb N$. In other words,
\begin{equation}\label{Limit}
u_n+v_n\to y.
\end{equation}
Note that \eqref{Bounded} implies that $\{Q_{\mathcal B}\mathcal B(x_n)\}$ is a Cauchy sequence in $\widetilde{Y}:=Y\slash \mathcal B(0)$ and therefore converges to a point of $\widetilde{Y}$, say $Q_{\mathcal B}\mathcal B(x_n)\to \widetilde v\in \widetilde{Y}$. Hence ${\rm dist}(v_n-v, \mathcal B(0))\to 0$ as $n\to 0$, that is, $v_n-v\to z$ for some $z\in \mathcal B(0)$. Hence $v_n\to v+z\in \widetilde v$. The closedness of $\mathcal B$ implies that $x\in D(\mathcal B)$ and $v+z\in \mathcal B(x)$. Hence $y=y-v-z+(v+z)\in \mathcal A(x)+\mathcal B(x)$ and so $\mathcal A+\mathcal B$ is closed.



To complete the proof, it is enough to show that
\begin{equation}\label{Enough}
\alpha^\prime(\mathcal A+\mathcal B)\leq \alpha(\mathcal A)\; {\rm and\; that}\; \beta^\prime(\mathcal A+\mathcal B)\leq \beta^\prime(\mathcal A)
\end{equation}
and then apply Lemma \eqref{Infinity} to conclude that $\mathcal A+\mathcal B$ has closed range and Lemma \ref{Closed Closed Range} to establish the inequalities in the theorem since $\alpha^\prime(\mathcal A+\mathcal B)\geq \alpha(\mathcal A+\mathcal B)$ by definition and $\beta^\prime(\mathcal A+\mathcal B)\geq \alpha(\mathcal A+\mathcal B)$ by \eqref{Beta Prime} and Lemma \ref{Equality}.

To prove \eqref{Enough}, suppose that for a given $\varepsilon>0$ there exists a closed linear manifold $N_\varepsilon\subset D(\mathcal A+\mathcal B)=D(\mathcal A)$ such that
\begin{equation}\label{Subspace}
\Vert (\mathcal A+\mathcal B)(x)\Vert \leq \varepsilon\Vert x\Vert \; {\rm for\; every}\; x\in N_\varepsilon.
\end{equation}
It then follows form \eqref{Subspace} and Lemma \ref{Norm Difference} that
\begin{eqnarray}\label{Result}
(\Vert \mathcal B\Vert+\varepsilon)\Vert x\Vert &\geq& \Vert \mathcal B(x)\Vert +\Vert (\mathcal A+\mathcal B)(x)\Vert \geq \Vert \mathcal B(x)\Vert +(\Vert \mathcal Ax\Vert -\Vert \mathcal B(x)\Vert)\nonumber \\
&\geq& \Vert \mathcal A(x)\Vert \geq  \gamma(\mathcal A)\Vert \widetilde{x}\Vert
\end{eqnarray}
where $\widetilde{x}\in \widetilde{X}:=X\slash N(\mathcal A)$. If we pick $\varepsilon$ such that $0<\varepsilon<\gamma(\mathcal A)-\Vert \mathcal B\Vert$ we see from \eqref{Result} that $\Vert \widetilde{x}\Vert < \Vert x\Vert $ for all non-zero $x\in D(\mathcal A)$. It therefore follows from Lemmma \ref{Dimension} that
$$
{\rm dim}\; N_\varepsilon \leq {\rm dim}\; N(\mathcal A) = \alpha(\mathcal A),
$$
which means that $\alpha^\prime(\mathcal A+\mathcal B)\leq \alpha (\mathcal A)$.

To prove the second inequality, we note that Lemma \ref{Norms} together with Lemma \ref{Domains} imply that $\Vert \mathcal B^\prime\Vert =\Vert \mathcal B\Vert$,  $\gamma(\mathcal A^\prime)=\gamma(\mathcal A)$, and $(\mathcal A+\mathcal B)^\prime = \mathcal A^\prime +\mathcal B^\prime$. It therefore follows that $\Vert \mathcal B^\prime\Vert \leq \gamma(\mathcal A^\prime)$. Applying what has been proved above to the pair $\mathcal A^\prime, \; \mathcal B^\prime$, we see that
$$
\beta^\prime(\mathcal A+\mathcal B)=\alpha^\prime((\mathcal A+\mathcal B)^\prime)=\alpha^\prime(\mathcal A^\prime +\mathcal B^\prime)
\leq \alpha(\mathcal A^\prime)=\beta(\mathcal A),
$$
where the last equality follows from Lemma \ref{Equality}.
\end{proof}

\begin{lemma}\label{Banach Space}
Let $X$ and $Y$ be Banach spaces and let $\mathcal T$ be a closed linear relation with $D(\mathcal T)\subset X$ and $R(\mathcal T)\subset Y$. Set
\begin{equation}\label{New norm}
\Vert x\Vert_{D(\mathcal T)}:= \Vert x\Vert + \Vert \mathcal T(x)\Vert, \quad x\in D(\mathcal T).
\end{equation}
Then $D(\mathcal T)$ becomes a Banach space if $\Vert \cdot \Vert_{D(\mathcal T)}$ is chosen as the norm.
\end{lemma}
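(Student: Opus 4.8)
The plan is to recognize $\Vert\cdot\Vert_{D(\mathcal T)}$ as the graph norm of $\mathcal T$ and to establish completeness by the standard lifting argument, now carried out in the quotient $\widehat Y:=Y\slash \mathcal T(0)$. First I would check that \eqref{New norm} really defines a norm and not merely a seminorm. Since $Q_{\mathcal T}\mathcal T$ is single valued and linear, the map $x\mapsto \Vert \mathcal T(x)\Vert = \Vert Q_{\mathcal T}\mathcal T(x)\Vert$ is a seminorm on $D(\mathcal T)$, and adding the genuine norm $\Vert x\Vert$ of $X$ restores positive definiteness; homogeneity and the triangle inequality are inherited term by term. Note also that $\mathcal T(0)$ is closed by Lemma \ref{Equivalent-Closed}, so $\widehat Y=Y\slash \mathcal T(0)$ is itself a Banach space and $Q_{\mathcal T}=Q_{\mathcal T(0)}$.

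For completeness, let $\{x_n\}$ be Cauchy in $(D(\mathcal T), \Vert\cdot\Vert_{D(\mathcal T)})$. From the identity $\Vert x_n-x_m\Vert_{D(\mathcal T)}=\Vert x_n-x_m\Vert + \Vert Q_{\mathcal T}\mathcal T(x_n)-Q_{\mathcal T}\mathcal T(x_m)\Vert$ I read off that $\{x_n\}$ is Cauchy in $X$ and that $\{Q_{\mathcal T}\mathcal T(x_n)\}$ is Cauchy in $\widehat Y$. Completeness of $X$ and of $\widehat Y$ then yields $x_n\to x$ in $X$ and $Q_{\mathcal T}\mathcal T(x_n)\to \widehat y$ in $\widehat Y$ for some $x\in X$ and $\widehat y\in \widehat Y$.

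The decisive step is to upgrade this quotient convergence to convergence of honest representatives inside the graph. Fix $y\in Y$ with $Q_{\mathcal T}(y)=\widehat y$ and pick $z_n\in \mathcal T(x_n)$. Then $Q_{\mathcal T}(z_n)=Q_{\mathcal T}\mathcal T(x_n)\to Q_{\mathcal T}(y)$, which gives ${\rm dist}(z_n-y,\mathcal T(0))\to 0$; choosing $w_n\in \mathcal T(0)$ with $\Vert z_n-y-w_n\Vert\to 0$ and setting $z_n^\prime:=z_n-w_n$, I obtain $z_n^\prime\to y$ in $Y$. By Theorem \ref{Image} we have $\mathcal T(x_n)=\mathcal T(0)+z_n$, and since $\mathcal T(0)$ is a subspace it follows that $z_n^\prime\in \mathcal T(x_n)$, that is, $(x_n,z_n^\prime)\in G(\mathcal T)$. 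As $(x_n,z_n^\prime)\to (x,y)$ and $G(\mathcal T)$ is closed, we conclude $x\in D(\mathcal T)$ and $y\in \mathcal T(x)$, whence $Q_{\mathcal T}\mathcal T(x)=Q_{\mathcal T}(y)=\widehat y$.

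Finally I would assemble the pieces: $\Vert x_n-x\Vert_{D(\mathcal T)}=\Vert x_n-x\Vert + \Vert Q_{\mathcal T}\mathcal T(x_n)-\widehat y\Vert \to 0$, so $x_n\to x$ in the graph norm and $D(\mathcal T)$ is complete. The only real obstacle is the representative-lifting in the third paragraph: because $\mathcal T$ is multivalued, convergence of $Q_{\mathcal T}\mathcal T(x_n)$ in the quotient does not directly furnish a convergent sequence inside $\mathcal T(x_n)$, and one must correct each $z_n$ by an element of $\mathcal T(0)$ before the closedness of $G(\mathcal T)$ can be invoked.
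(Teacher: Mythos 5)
Your proof is correct and follows essentially the same route as the paper's: split the graph-norm Cauchy condition into Cauchy sequences in $X$ and in $Y\slash\mathcal T(0)$, lift the quotient limit to representatives $z_n^\prime\in\mathcal T(x_n)$ converging in $Y$, and invoke closedness of $G(\mathcal T)$. Your correction of each $z_n$ by an element $w_n\in\mathcal T(0)$ is in fact a more careful rendering of the lifting step, which the paper states somewhat loosely as ``$u_n-u\to v\in\mathcal T(0)$.''
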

\begin{proof}
That $\Vert \cdot \Vert_{D(\mathcal T)}$ defines a norm on $D(\mathcal T)$ is clear. To prove completeness, assume that $\left\{x_n\right\}$ is a Cauchy sequence in $D(\mathcal T)$. Then $\left\{x_n\right\}$ and $\left\{Q_{\mathcal T}\mathcal T(x_n)\right\}$ are Cauchy sequences in $X$ and $\widetilde{Y}=Y\slash \mathcal T(0)$ respectively and therefore converge, say, $x_n\to x\in X$ and $Q_{\mathcal T}\mathcal T(x_n)\to \widetilde{u}\in \widetilde{Y}$. Let $u_n\in \mathcal T(x_n)$ for each $n\in \mathbb N$. Then $\widetilde{u}_n\to \widetilde{u}$ and so ${\rm dist}\;(u_n-u, \mathcal T(0))\to 0$ as $n\to \infty$, that is, $u_n-u\to v\in \mathcal T(0)$. We therefore see that $u_n\to u+v=s\in \widetilde{u}$. The closedness of $\mathcal T$ implies that $x\in D(\mathcal T)$ and that $s\in \mathcal T(x)$. Now,
\begin{eqnarray*}
\Vert x_n-x\Vert_{D(\mathcal T)}&=&\Vert x_n-x\Vert+\Vert Q_{\mathcal T}\mathcal T(x_n-x)\Vert=\Vert x_n-x\Vert+
\Vert Q_{\mathcal T}u_n-Q_{\mathcal T}s)\Vert\\
&=& \Vert x_n-x\Vert+\Vert \widetilde{u}_n-\widetilde{u}\Vert\to 0 \; {\rm as}\; n\to \infty.
\end{eqnarray*}
This shows that $D(\mathcal T)$ is complete.
\end{proof}
Let $X$ and $Y$ be Banach spaces and let $\mathcal{A}, \mathcal{B}\in CLR(X, Y)$ be such that $D(\mathcal A)\subset D(\mathcal B)$ and $\mathcal B(0)\subset \mathcal A(0)$. In the following theorem we write $\Vert \mathcal B(x)\Vert_{\mathcal A}$ to mean the quantity $\Vert Q_{\mathcal A}\mathcal{B}(x)\Vert$. The quantities $\Vert \mathcal A(x)\Vert_{\mathcal A}$ and $\Vert \mathcal B(x)\Vert_{\mathcal B}$ are defined in a similar way

\begin{theorem}\label{Complete}
Let $X$ and $Y$ be Banach spaces and let $\mathcal A$ be a closed linear relation with $D(\mathcal A)\subset X$ and with closed range $R(\mathcal A)\subset Y$. Let $\mathcal B$ be a closed linear relation such that $D(\mathcal A) \subset D(\mathcal B)\subset X$, $R(\mathcal B)\subset Y$, $\mathcal B(0)\subset \mathcal A(0)$, and
\begin{equation}\label{Equation 1}
\Vert \mathcal B(x)\Vert_{\mathcal B} \leq \sigma\Vert x\Vert + \tau\Vert \mathcal A(x)\Vert_{\mathcal A}, \; \forall x\in D(\mathcal A),
\end{equation}
where $\sigma$ and $\tau$ are non-negative constants such that
\begin{equation}\label{Equation 2}
\sigma + \tau \gamma(\mathcal A)<\gamma(\mathcal A).
\end{equation} Then the linear relation $\mathcal A + \mathcal B$ is closed and has closed range. If $\alpha (\mathcal A)<\infty$ then
\begin{equation}\label{Equation 3}
\alpha(\mathcal A +\mathcal B)\leq \alpha(\mathcal A), \; {\rm and}\; \beta(\mathcal A +\mathcal B)\leq \beta(\mathcal A).
\end{equation}
\end{theorem}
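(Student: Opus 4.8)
The plan is to follow the architecture of the proof of Theorem \ref{Complete-1}, replacing the uniform bound $\Vert\mathcal B\Vert<\gamma(\mathcal A)$ by the relative bound \eqref{Equation 1}--\eqref{Equation 2}. Two observations are used throughout. First, $\mathcal B(0)\subset\mathcal A(0)$ forces $(\mathcal A+\mathcal B)(0)=\mathcal A(0)$, so $Q_{\mathcal A+\mathcal B}=Q_{\mathcal A}$ and $Q_{\mathcal A}\mathcal B$ is single valued; moreover $\overline{\mathcal B(0)}\subset\overline{\mathcal A(0)}$ gives $\Vert\mathcal B(x)\Vert_{\mathcal A}\le\Vert\mathcal B(x)\Vert_{\mathcal B}$ for every $x\in D(\mathcal B)$. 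Second, since $R(\mathcal A)$ is closed, Lemma \ref{Lamma-Closed Range} gives $\gamma(\mathcal A)>0$, and \eqref{Equation 2} forces $\tau<1$.

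For closedness I would first show that the graph norms $\Vert x\Vert+\Vert\mathcal A(x)\Vert$ and $\Vert x\Vert+\Vert(\mathcal A+\mathcal B)(x)\Vert$ are equivalent on $D(\mathcal A)$. The triangle inequality gives $\Vert(\mathcal A+\mathcal B)(x)\Vert\le\Vert\mathcal A(x)\Vert+\Vert\mathcal B(x)\Vert_{\mathcal A}\le(1+\tau)\Vert\mathcal A(x)\Vert+\sigma\Vert x\Vert$, while Lemma \ref{Norm Difference}(b) with $\mathcal S=\mathcal A$, $\mathcal T=\mathcal B$ yields $(1-\tau)\Vert\mathcal A(x)\Vert\le\Vert(\mathcal A+\mathcal B)(x)\Vert+\sigma\Vert x\Vert$; the factor $(1-\tau)>0$ is exactly what makes the two norms comparable. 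Because $\mathcal A$ is closed, Lemma \ref{Banach Space} says $(D(\mathcal A),\Vert\cdot\Vert_{D(\mathcal A)})$ is complete, so the equivalent norm is complete too. I would then run the direct sequence argument as in Theorem \ref{Complete-1}: if $x_n\to x$ in $X$ and $y_n\in(\mathcal A+\mathcal B)(x_n)$ with $y_n\to y$, then $Q_{\mathcal A}y_n=Q_{\mathcal A}(\mathcal A+\mathcal B)(x_n)$ is Cauchy, the lower estimate makes $\{x_n\}$ Cauchy in the $\mathcal A$-graph norm, completeness gives $x\in D(\mathcal A)$ with $Q_{\mathcal A}\mathcal A(x_n)\to Q_{\mathcal A}\mathcal A(x)$, relative boundedness gives $Q_{\mathcal A}\mathcal B(x_n)\to Q_{\mathcal A}\mathcal B(x)$, and adding shows $Q_{\mathcal A}y=Q_{\mathcal A}(\mathcal A+\mathcal B)(x)$, i.e. $y\in(\mathcal A+\mathcal B)(x)$ since $\mathcal A(0)$ is closed.

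For the nullity bound and closed range I would reuse the $\alpha'$-machinery. Given $\varepsilon>0$ and a closed $N_\varepsilon\subset D(\mathcal A)$ with $\Vert(\mathcal A+\mathcal B)(x)\Vert\le\varepsilon\Vert x\Vert$ on $N_\varepsilon$, Lemma \ref{Norm Difference}(b) and \eqref{Equation 1} give $(1-\tau)\Vert\mathcal A(x)\Vert\le(\varepsilon+\sigma)\Vert x\Vert$, hence $\gamma(\mathcal A)\Vert\widetilde x\Vert\le\Vert\mathcal A(x)\Vert\le\frac{\varepsilon+\sigma}{1-\tau}\Vert x\Vert$ with $\widetilde x\in X/N(\mathcal A)$. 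Condition \eqref{Equation 2} is precisely $\frac{\sigma}{(1-\tau)\gamma(\mathcal A)}<1$, so choosing $\varepsilon<(1-\tau)\gamma(\mathcal A)-\sigma$ makes $\Vert\widetilde x\Vert<\Vert x\Vert$, and Lemma \ref{Dimension} forces $\dim N_\varepsilon\le\alpha(\mathcal A)$. Thus $\alpha'(\mathcal A+\mathcal B)\le\alpha(\mathcal A)<\infty$; Lemma \ref{Infinity} then rules out non-closed range, and Lemma \ref{Closed Closed Range} upgrades $\alpha'=\alpha$, giving $\alpha(\mathcal A+\mathcal B)\le\alpha(\mathcal A)$.

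The deficiency inequality I would obtain by duality, and this is where I expect the real work. Using $\gamma(\mathcal A')=\gamma(\mathcal A)$ (Lemma \ref{Domains}(d)) and $\alpha(\mathcal A')=\beta(\mathcal A)$ (Lemma \ref{Equality}), the target $\beta(\mathcal A+\mathcal B)=\alpha((\mathcal A+\mathcal B)')\le\alpha(\mathcal A')=\beta(\mathcal A)$ would follow by applying the already-proved nullity estimate to the adjoint pair, provided I can establish (i) the sum formula $(\mathcal A+\mathcal B)'=\mathcal A'+\mathcal B'$ and (ii) a relative bound $\Vert\mathcal B'(y')\Vert\le\sigma'\Vert y'\Vert+\tau'\Vert\mathcal A'(y')\Vert$ with $\sigma'+\tau'\gamma(\mathcal A')<\gamma(\mathcal A')$. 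The obstacle is that Lemma \ref{Norms}(d) delivers the sum formula only when the perturbation is \emph{uniformly} bounded, which $\mathcal B$ need not be here; so the main difficulty is transferring both the sum formula and the relative bound to the adjoints under the merely relative boundedness \eqref{Equation 1}. I expect this to require exploiting that $\mathcal B$ is bounded as a relation from $(D(\mathcal A),\Vert\cdot\Vert_{D(\mathcal A)})$ into $Y$ together with the closed-range hypothesis on $\mathcal A$, and it is the step I would spend the most care on.
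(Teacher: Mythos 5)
Your treatment of closedness and of the nullity bound is sound and close in spirit to the paper's: the closedness argument is essentially the one the authors give (Cauchyness of $Q_{\mathcal A}u_n$ from the lower estimate of Lemma \ref{Norm Difference}(b), then closedness of $\mathcal A$ plus relative boundedness of $\mathcal B$ to identify the limit), and your direct rerun of the $\alpha^\prime$-machinery with $\varepsilon<(1-\tau)\gamma(\mathcal A)-\sigma$ is a legitimate adaptation of the argument inside Theorem \ref{Complete-1}, whereas the paper instead reduces to that theorem wholesale. (One caveat you share with the paper: the route to closed range via Lemma \ref{Infinity} needs $\alpha^\prime(\mathcal A+\mathcal B)<\infty$, hence $\alpha(\mathcal A)<\infty$; neither you nor the authors obtain the closed-range conclusion without that finiteness.)

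The genuine gap is the deficiency inequality, which you explicitly leave open. You correctly diagnose that Lemma \ref{Norms}(d) gives $(\mathcal A+\mathcal B)^\prime=\mathcal A^\prime+\mathcal B^\prime$ only for uniformly bounded $\mathcal B$; but the resolution is not to transfer a relative bound to the adjoint pair at all. The paper's device is to renorm $D(\mathcal A)$ by $\Vert x\Vert_{\breve D}:=(\sigma+\varepsilon)\Vert x\Vert+(\tau+\varepsilon)\Vert\mathcal A(x)\Vert$, which is complete by Lemma \ref{Banach Space}, and to regard $\mathcal A,\mathcal B$ as relations $\breve{\mathcal A},\breve{\mathcal B}$ from this Banach space into $Y$. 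Then \eqref{Equation 1} says precisely $\Vert\breve{\mathcal B}\Vert\le 1$, the computation $\Vert\widetilde x\Vert_{\breve D}=(\sigma+\varepsilon)\Vert\widetilde x\Vert+(\tau+\varepsilon)\Vert\mathcal A(x)\Vert$ yields $\gamma(\breve{\mathcal A})=\gamma(\mathcal A)/\bigl((\sigma+\varepsilon)+(\tau+\varepsilon)\gamma(\mathcal A)\bigr)$, and \eqref{Equation 2} lets one choose $\varepsilon$ so small that $\gamma(\breve{\mathcal A})>1\ge\Vert\breve{\mathcal B}\Vert$. Theorem \ref{Complete-1} --- whose deficiency half is already proved there by dualizing a uniformly bounded perturbation --- then applies verbatim to the pair $\breve{\mathcal A},\breve{\mathcal B}$, and since the null spaces and ranges, hence $\alpha$ and $\beta$, are unchanged by the renorming, both inequalities in \eqref{Equation 3} and the closedness of the range follow at once. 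You gesture at exactly this boundedness of $\mathcal B$ in the graph norm in your final sentence, but without making the renorming the reduction itself, the adjoint step you set up for the original pair does not go through as stated.
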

\begin{proof}
Let $\left\{ x_n\right\}$ be a sequence in $D(\mathcal A)$ such that $x_n\to x\in X$ and let $\left\{ y_n\right\}$ be a sequence in $R(\mathcal A+\mathcal B)$ such that $y_n\to y\in Y$, where $y_n=u_n+v_n$ with $u_n\in \mathcal A(x_n)$ and $v_n\in \mathcal B(x_n)$ for each $n\in \mathbb N$. Note that \eqref{Equation 1} implies that
\begin{equation}\label{Inequality 1}
\Vert \mathcal A(x)\Vert_{\mathcal A} -\Vert \mathcal B(x)\Vert_{\mathcal B} \geq (1-\tau)\Vert \mathcal A(x)\Vert_{\mathcal A}-\sigma \Vert x\Vert.
\end{equation}
Since $\Vert \mathcal B(x)\Vert_{\mathcal B} =\Vert Q_{\mathcal B}\mathcal B(x)\Vert_{\mathcal B} \geq \Vert Q_{\mathcal A}\mathcal Bx\Vert_{\mathcal A}$
we see that
\begin{equation}\label{Inequality 2}
\Vert Q_{\mathcal A}\mathcal A(x)\Vert_{\mathcal A} -\Vert Q_{\mathcal A}\mathcal B(x)\Vert_{\mathcal A} \geq (1-\tau)\Vert \mathcal A(x)\Vert-\sigma \Vert x\Vert
\end{equation}
and that
\begin{equation}\label{Inequality 3}
\Vert Q_{\mathcal A} \mathcal Ax + Q_{\mathcal A}\mathcal Bx\Vert_{\mathcal A} \geq (1-\tau)\Vert Q_{\mathcal A}\mathcal Ax\Vert-\sigma \Vert x\Vert.
\end{equation}
Inequality \eqref{Inequality 3} and the linearity of $Q_{\mathcal A}$ implies that
\begin{equation}\label{Inequality 4}
\Vert Q_{\mathcal A} (u_n+v_n)\Vert_{\mathcal A} \geq (1-\tau)\Vert Q_{\mathcal A}u_n\Vert-\sigma \Vert x_n\Vert
\end{equation}
so that
\begin{equation}\label{Inequality 5}
\Vert y_n\Vert=\Vert u_n+v_n \Vert \geq (1-\tau)\Vert Q_{\mathcal A}u_n\Vert-\sigma \Vert x_n\Vert.
\end{equation}

It therefore follows that for $m,n\in \mathbb N$,
\begin{equation}\label{Inequality 6}
\Vert y_n-y_m\Vert \geq (1-\tau)\Vert  Q_{\mathcal A}u_n-Q_{\mathcal A}u_m)\Vert-\sigma \Vert x_n-x_m\Vert.
\end{equation}
Since $1-\tau>0$ by \eqref{Equation 2} and both $\{ x_n\}$ and $\{ y_n\}$ are Cauchy sequences, it follows by \eqref{Inequality 6} that
$\left\{ Q_{\mathcal A}u_n\right\}$ is a Cauchy sequence and therefore converges, say,
\begin{equation}\label{convergence}
\widetilde{u}_n\to \widetilde{u},
\end{equation}
where we denote $Q_{\mathcal A}u_n$ by $\widetilde{u}_n$ in $Y\slash \mathcal A(0)$. The convergence in \eqref{convergence} implies that ${\rm dist}\; \left(u_n-u, \mathcal A(0)\right)\to 0$ as $n\to \infty$. This means that $u_n-u$ converges to an element of $\overline{\mathcal A(0)}=\mathcal A(0)$, say $u_n-u\to z\in \mathcal{A}(0)$. This means that $u_n\to z-u = s$. The closedness of $\mathcal A$ implies that $x\in D(\mathcal A)$ and $s\in \mathcal A(x)$. Since $u_n\to s$, we see that $Q_{\mathcal A}u_n=Q_{\mathcal A}s$. Applying \eqref{Equation 1} to $x_n-x$ we see that $Q_{\mathcal B}\mathcal B(x_n)\to Q_{\mathcal B}\mathcal B(x)$, that is, ${\rm dist}\;\left(v_n-v, \mathcal B(0)\right)\to 0$ as $n\to \infty$, $v\in \mathcal Bx$. This shows that $v_n-v$ converges to an element say $w$ of $\mathcal B(0)$, that is, $v_n\to w-v=r\in \mathcal B(x)$ since $\mathcal B(x)=\mathcal B(0)+v$. Hence $y=s+r\in (\mathcal A+\mathcal B)(x)$, showing that $\mathcal A+\mathcal B$ is closed.

We introduce a norm on $D(\mathcal A)$ by
\begin{equation}\label{New-Norm}
\Vert x\Vert_{\breve{D}}:=(\sigma +\varepsilon)\Vert x\Vert +(\tau +\varepsilon\Vert \mathcal A(x)\Vert \geq \varepsilon\Vert x\Vert,
\end{equation}
for some arbitrary but fixed positive constant $\varepsilon$. Note that the  space $D(\mathcal A)$ becomes a Banach space by Lemma \ref{Banach Space}, which we denote by $\breve{D}$. We now regard $\mathcal A$ and $\mathcal B$ as linear relations with $D(\mathcal A)=D(\mathcal B)=\breve{D}$ and denote them by $\breve{\mathcal A}$ and $\breve{\mathcal B}$ respectively. Since
$\Vert x\Vert_{\breve{D}}= (\sigma +\varepsilon)\Vert x\Vert +(\tau +\varepsilon\Vert \mathcal Ax\Vert >\sigma \Vert x\Vert + \tau\Vert \mathcal Ax\Vert \geq \Vert Bx\Vert$ for every $x\in \breve{D}$ and $\Vert \breve{\mathcal B}\Vert:= \underset{x\in B_{\breve{D}}}{\sup}\Vert \breve{\mathcal B}x\Vert$, we see that $\Vert \breve{\mathcal B}\Vert \leq 1$. From $\Vert \mathcal Ax\Vert \leq (\tau =\varepsilon)^{-1}\Vert x\Vert_{\breve{D}}$ and the definition of $\Vert \breve{\mathcal A}\Vert$, we also see that $\Vert \breve{\mathcal A}\Vert \leq (\tau +\varepsilon)^{-1}$.

It is clear that $R(\breve{\mathcal A})=R(\mathcal A)$ is closed and that
\begin{eqnarray}\label{Same}
\alpha(\breve{\mathcal A})=\alpha(\mathcal A), && \beta(\breve{\mathcal A})=\beta(\mathcal A), \nonumber \\
\alpha(\breve{\mathcal A}+ \breve{\mathcal B})=\alpha(\mathcal A + \mathcal B), &&
\beta(\breve{\mathcal A}+ \breve{\mathcal B})=\beta(\mathcal A + \mathcal B)
\end{eqnarray}
Please note that $\gamma(\breve{\mathcal A})=\gamma(\mathcal A)$ if $\gamma(\mathcal A)=\infty$. In order to relate $\gamma(\breve{\mathcal A})$ to $\gamma(\mathcal A)$ in the other case, we recall that in this case,
$$
\gamma(\breve{\mathcal A})=\inf\left\{ \frac{\Vert \breve{\mathcal A}(x)\Vert}{\Vert \widetilde{x}\Vert_{\breve{D}}}:x\in \breve{D}, x\notin N(\breve{\mathcal A})\right\}
= \inf\left\{ \frac{\Vert \mathcal A(x)\Vert}{\Vert \widetilde{x}\Vert_{\breve{D}}}:x\in \breve{D}, x\notin N(\grave{\mathcal A})\right\}
$$
where $\widetilde{x}\in \widetilde{X}:= X\slash N(\mathcal A)$.

But
\begin{eqnarray*}
\Vert \widetilde{x}\Vert_{\breve{D}} &=& \underset{z\in N(\mathcal A)}{\inf}\Vert x-z\Vert_{\breve{D}}\\
&=&\underset{z\in N(\mathcal A)}{\inf}[(\sigma +\varepsilon)\Vert x-z\Vert +(\tau +\varepsilon)\Vert \mathcal A(x-z)\Vert]\\
&=&(\sigma +\varepsilon)\Vert \widetilde x\Vert +(\tau +\varepsilon)\Vert \mathcal A(x)\Vert
\end{eqnarray*}
where we have used the linearily of the natural quotient map and the fact that $\mathcal A(z)=\mathcal A(0)$.

Hence
\begin{eqnarray*}
\gamma(\breve{\mathcal A}) &=& \inf\left\{
\frac{\Vert A(x)\Vert}{(\sigma +\varepsilon)\Vert \widetilde x\Vert +(\tau +\varepsilon)\Vert \mathcal Ax\Vert}:
x\in D(\mathcal A), x\notin N(\mathcal A)\right\}\\
&=& \frac{\gamma(\mathcal A)}{(\sigma +\varepsilon) +(\tau +\varepsilon)\gamma(\mathcal A)},
\end{eqnarray*}
where we have used the fact that $f(t)=\frac{t}{\alpha +t}$ is an increasing function for any constant $\alpha$.

In view of \eqref{Equation 2}, we can make $\gamma{(\breve{\mathcal A})}>1$ by choosing $\varepsilon$ small enough. Since $\Vert \breve{\mathcal B}\Vert \leq 1$, we can apply Theorem \ref{Complete-1} to the pair $\grave{\mathcal A},\; \breve{\mathcal B}$ with the result that $R(\breve{\mathcal A}+\breve{\mathcal B})=R(\mathcal A+\mathcal B)$ is closed and \eqref{Inequalities} holds with $\mathcal A, \; \mathcal B$ replaced with $\breve{\mathcal A},\; \breve{\mathcal B}$. The result then follows by \eqref{Same}.
\end{proof}

\section{Stability Theorms}
Consider an eigenvalue problem of the form
\begin{equation}\label{Eigenvalue Problem}
Ax=\lambda B
\end{equation}
where $A$ and $B$ are linear operators from $X$ to $Y$ and the associated
\begin{equation}\label{Eigenvalue-Problem}
A^*f^\prime = \lambda B^*f^\prime
\end{equation}
where the adjoints $A^*$ and $B^*$ exist. The null space $N(A-\lambda B)$ of the linear operator $A-\lambda B$ is the solution set of the eigenvalue problem \eqref{Eigenvalue Problem}. Similarly, $N(A^*-\lambda B^*)=R(A-\lambda B)^\perp$ is the solution set of the eigenvalue problem \eqref{Eigenvalue-Problem}. In studying the above eigenvalue problems, one therefore gets interested in the behaviour of $N(A-\lambda B)$ and $N(A^*-\lambda B^*)$.

In the setting of linear relations, the eigenvalue problems \eqref{Eigenvalue Problem} and \eqref{Eigenvalue-Problem} can be formulated as
\begin{equation}\label{Eigenvalue}
\mathcal A(x) \cap \lambda\mathcal B(x)\ne \emptyset
\end{equation}
and
\begin{equation}\label{Eigen-value}
\mathcal A^\prime (x^\prime) \cap \lambda\mathcal B^\prime (x^\prime)\ne \emptyset
\end{equation}
where $\mathcal A,\; \mathcal B\in LR(X,Y)$. Conditions \eqref{Eigenvalue} and \eqref{Eigenvalue-Problem} are equivalent to
\begin{equation}\label{Eigenvalue Equivalent}
(\mathcal A-\lambda \mathcal B)(x)=(\mathcal A-\lambda \mathcal B)(0)
\end{equation}
and
\begin{equation}\label{Eigenvalue-Equivalent}
(\mathcal A^\prime -\lambda \mathcal B^\prime)(x^\prime)=(\mathcal A^\prime-\lambda \mathcal B^\prime)(0)
\end{equation}
respectively.

As before, the solution sets of \eqref{Eigenvalue Equivalent} and \eqref{Eigenvalue-Equivalent} are $N(\mathcal A-\lambda \mathcal B)$ and $N(\mathcal A^\prime -\lambda \mathcal B^\prime)=R(\mathcal A-\lambda\mathcal B)^\perp$ respectively. In this last section we study the stability of the dimensions of the null spaces of $\mathcal A-\lambda\mathcal B$ and $\mathcal A^\prime -\lambda \mathcal B^\prime$ as $\lambda$ varies in some specified subset of the complex plane. This is considered in the following theorems.

\begin{theorem}
Let $X$ and $Y$ be Banach spaces and let $\mathcal A, \mathcal B\in CLR(X,Y)$ be such that $\mathcal A$ has closed range,  $D(\mathcal B)\supset D(\mathcal A)$, $\mathcal B(0)\subset \mathcal A(0)$, and
\begin{equation}\label{Norm-Again-1}
\Vert \mathcal B(x)\Vert \leq \sigma \Vert x\Vert +\tau \Vert \mathcal A(x)\Vert \;\; {\rm for\; every}\; x\in D(\mathcal A)
\end{equation}
where $\sigma$ and $\tau$ are non-negative constants.
Then $\mathcal A-\lambda \mathcal B$ is closed for $\vert \lambda \vert <\frac{\gamma(\mathcal A)}{\sigma+\tau\gamma(\mathcal A)}$ and if $R(\mathcal A)\setminus \mathcal A(0)\ne \emptyset$,  then $\gamma (\mathcal A-\lambda \mathcal B)<\infty$ for $\vert \lambda \vert \geq \frac{\gamma(\mathcal A)}{\sigma+\tau\gamma(\mathcal A)}$.
\end{theorem}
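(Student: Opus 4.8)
The plan is to derive both assertions by regarding $-\lambda\mathcal B$ as a perturbation of $\mathcal A$, appealing to Theorem \ref{Complete} for the closedness statement and to the dichotomy of Lemma \ref{Infimum} for the statement about $\gamma$. For the first assertion, fix $\lambda\neq 0$ with $|\lambda|<\gamma(\mathcal A)/(\sigma+\tau\gamma(\mathcal A))$; the case $\lambda=0$ is trivial since $\mathcal A-0\cdot\mathcal B=\mathcal A$ is closed. I would apply Theorem \ref{Complete} with the perturbing relation taken to be $-\lambda\mathcal B\in CLR(X,Y)$. The structural hypotheses hold: $D(\mathcal A)\subset D(-\lambda\mathcal B)=D(\mathcal B)$ and $(-\lambda\mathcal B)(0)=\mathcal B(0)\subset\mathcal A(0)$. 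Because $(-\lambda\mathcal B)(0)=\mathcal B(0)$ we have $Q_{-\lambda\mathcal B}=Q_{\mathcal B}$, so $\Vert(-\lambda\mathcal B)(x)\Vert_{-\lambda\mathcal B}=|\lambda|\Vert\mathcal B(x)\Vert$; feeding \eqref{Norm-Again-1} into this shows that \eqref{Equation 1} holds with constants $\sigma'=|\lambda|\sigma$ and $\tau'=|\lambda|\tau$. These satisfy $\sigma'+\tau'\gamma(\mathcal A)=|\lambda|\bigl(\sigma+\tau\gamma(\mathcal A)\bigr)<\gamma(\mathcal A)$, which is precisely \eqref{Equation 2}. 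Hence Theorem \ref{Complete} applies and yields that $\mathcal A-\lambda\mathcal B=\mathcal A+(-\lambda\mathcal B)$ is closed (indeed with closed range), giving the first claim.

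For the second assertion I would argue by contradiction through Lemma \ref{Infimum}. Since $R(\mathcal A)$ is closed we have $\gamma(\mathcal A)>0$ by Lemma \ref{Lamma-Closed Range}, and $R(\mathcal A)\setminus\mathcal A(0)\neq\emptyset$ is equivalent to $R(\mathcal A)\neq\mathcal A(0)$, i.e. to $D(\mathcal A)\not\subset N(\mathcal A)$, so $\gamma(\mathcal A)<\infty$. Suppose now $\gamma(\mathcal A-\lambda\mathcal B)=\infty$. By Lemma \ref{Infimum} this forces $(\mathcal A-\lambda\mathcal B)(x)=(\mathcal A-\lambda\mathcal B)(0)=\mathcal A(0)$ for every $x\in D(\mathcal A)$. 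Taking $a\in\mathcal A(x)$ and $b\in\mathcal B(x)$ and applying $Q_{\mathcal A}$ to $a-\lambda b\in\mathcal A(0)$ produces the identity of single-valued operators $Q_{\mathcal A}\mathcal A(x)=\lambda\,Q_{\mathcal A}\mathcal B(x)$ on $D(\mathcal A)$, whence $\Vert\mathcal A(x)\Vert=|\lambda|\Vert\mathcal B(x)\Vert_{\mathcal A}$ for all $x$.

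I would then combine $\Vert\mathcal B(x)\Vert_{\mathcal A}\le\Vert\mathcal B(x)\Vert$ (valid because $\mathcal B(0)\subset\mathcal A(0)$) with the relative bound \eqref{Norm-Again-1} and the defining lower bound $\Vert\mathcal A(x)\Vert\ge\gamma(\mathcal A)\,{\rm dist}(x,N(\mathcal A))$ to obtain
\[
\Vert\mathcal A(x)\Vert\,(1-|\lambda|\tau)\le|\lambda|\sigma\Vert x\Vert,\qquad x\in D(\mathcal A).
\]
Selecting, via Lemma \ref{Extra} (the Riesz-type selection already used in the proof of Lemma \ref{Closed Closed Range}), a vector $x\in D(\mathcal A)\setminus N(\mathcal A)$ with ${\rm dist}(x,N(\mathcal A))$ arbitrarily close to $\Vert x\Vert$ would then yield $\gamma(\mathcal A)(1-|\lambda|\tau)\le|\lambda|\sigma$, that is $|\lambda|\ge\gamma(\mathcal A)/(\sigma+\tau\gamma(\mathcal A))$.

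The main obstacle, as I see it, lies exactly at this interface. The reverse-triangle estimate of Lemma \ref{Norm Difference}(b) is available only in the form $\Vert(\mathcal A-\lambda\mathcal B)(x)\Vert\ge\Vert\mathcal A(x)\Vert-|\lambda|\Vert\mathcal B(x)\Vert$, since the inclusion needed for the opposite bound, $\mathcal A(0)\subset\mathcal B(0)$, is not assumed; moreover the factor $1-|\lambda|\tau$ is sign-definite only for $|\lambda|<1/\tau$. As $\gamma(\mathcal A)/(\sigma+\tau\gamma(\mathcal A))\le 1/\tau$, the computation cleanly delivers non-degeneracy, hence $\gamma(\mathcal A-\lambda\mathcal B)<\infty$, on the range $|\lambda|<\gamma(\mathcal A)/(\sigma+\tau\gamma(\mathcal A))$. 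Extending the conclusion to the stated range $|\lambda|\ge\gamma(\mathcal A)/(\sigma+\tau\gamma(\mathcal A))$ requires ruling out the rigid proportionality $Q_{\mathcal A}\mathcal A=\lambda\,Q_{\mathcal A}\mathcal B$ on all of $D(\mathcal A)$ throughout that range, rather than merely at an isolated value of $\lambda$; verifying this, together with handling the degenerate case $|\lambda|\tau\ge 1$, is where I anticipate the real difficulty, and I would examine the author's treatment of this borderline with particular care.
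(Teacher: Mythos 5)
Your argument is correct and is essentially the paper's own: the closedness assertion is obtained by feeding the pair $\mathcal A$, $-\lambda\mathcal B$ (with constants $\vert\lambda\vert\sigma$ and $\vert\lambda\vert\tau$, which satisfy \eqref{Equation 2} exactly when $\vert\lambda\vert<\gamma(\mathcal A)/(\sigma+\tau\gamma(\mathcal A))$) into Theorem \ref{Complete}, and the second assertion by supposing $\gamma(\mathcal A-\lambda\mathcal B)=\infty$, extracting $\mathcal A(x)\cap\lambda\mathcal B(x)\ne\emptyset$, hence $(1-\vert\lambda\vert\tau)\Vert\mathcal A(x)\Vert\le\sigma\vert\lambda\vert\,\Vert x\Vert$, and then shrinking $\Vert x\Vert$ toward ${\rm dist}(x,N(\mathcal A))$ over a nonzero coset of $N(\mathcal A)$ to force $\gamma(\mathcal A)\le\sigma\vert\lambda\vert/(1-\vert\lambda\vert\tau)$. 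The borderline you flag does not need to be crossed: the paper's proof, like yours, establishes only the contrapositive $\gamma(\mathcal A-\lambda\mathcal B)=\infty\Rightarrow\vert\lambda\vert\ge\gamma(\mathcal A)/(\sigma+\tau\gamma(\mathcal A))$, i.e.\ non-degeneracy for $\vert\lambda\vert<\gamma(\mathcal A)/(\sigma+\tau\gamma(\mathcal A))$, and this is the form in which the estimate \eqref{Not Empty} is reused in Theorem \ref{Stabilty in Lambda}, so the inequality sign in the theorem's statement is a misprint and your version is the one actually proved. One small caution: your parenthetical claim that Theorem \ref{Complete} also gives closed range for $\mathcal A-\lambda\mathcal B$ tacitly requires $\alpha(\mathcal A)<\infty$, as the paper itself observes just before Lemma \ref{Closed range-A-B}; since only closedness is asserted here, nothing is lost.
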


\begin{proof}
If follows from Theorem \ref{Complete} that $\mathcal A-\lambda \mathcal B$ is closed if $\vert \lambda \vert < \frac{\gamma(\mathcal A)}{\sigma+\tau\gamma(\mathcal A)}$.

If $\gamma(\mathcal A-\lambda\mathcal B)=\infty$ then $(\mathcal A-\lambda\mathcal B)(x)=(\mathcal A-\lambda\mathcal B)(0) =\mathcal A(0)$. The fact that $(\mathcal A-\lambda \mathcal B)(x)=(\mathcal A-\lambda\mathcal B)(0)$ for every $x\in D(\mathcal A-\lambda\mathcal B) = D(\mathcal A)$ implies that $\mathcal A(x)\cap \lambda\mathcal B(x)\neq \emptyset$ for every $x\in D(\mathcal A)$. Since $\mathcal B(0)\subset \mathcal A(0)$, it follows that $\Vert \mathcal  A(x)\Vert \leq \Vert \lambda\mathcal B(x)\Vert$ for every $x\in D(\mathcal A)$ and therefore
$$
\Vert \mathcal A(x)\Vert \leq \vert \lambda\vert \Vert \mathcal B(x)\Vert \leq \vert \lambda\vert(\sigma\Vert x\Vert +\tau\Vert \mathcal A(x)\Vert)
$$
so that
\begin{equation}\label{Range Not}
(1-\vert \lambda\Vert \tau)\Vert \mathcal A(x)\Vert \leq \sigma \vert \lambda\vert\Vert x\Vert.
\end{equation}
Since $R(\mathcal A)\ne \mathcal A(0)$, we see that there exists at least one $\widetilde{x}$ in $\widetilde{X}=X\slash N(\mathcal A)$ with $\widetilde{x}\ne 0$. Inequality \eqref{Range Not} implies that
\begin{equation}\label{Not Empty}
\gamma(\mathcal A)\Vert \widetilde{x}\Vert \leq \Vert \mathcal A(x)\Vert \leq \sigma\vert \lambda\vert \Vert x\Vert/(1-\vert \lambda\vert\tau).
\end{equation}
Since $x$ can vary freely in $\widetilde{x}$, we conclude that $\gamma(\mathcal A)\leq \sigma\vert \lambda\vert/(1-\vert \lambda\vert\tau)$ and that $\vert \lambda\vert \geq \frac{\gamma(\mathcal A)}{\sigma +\tau\gamma(\mathcal A)}$.
\end{proof}

\begin{theorem}\label{Stabilty in Lambda-1}
Let $X$ and $Y$ be Banach spaces and let $\mathcal A, \mathcal B\in CLR(X,Y)$ be such that $\mathcal A$ has closed range, $D(\mathcal B)\supset D(\mathcal A)$, $\mathcal B(0)\subset \mathcal T(0)$, and
\begin{equation}\label{Norm-Again}
\Vert \mathcal B(x)\Vert \leq \sigma \Vert x\Vert +\tau \Vert \mathcal A(x)\Vert \;\; {\rm for\; every}\; x\in D(\mathcal A),
\end{equation}
where $\sigma$ and $\tau$ are non negative constants. If $\nu(\mathcal A:\mathcal B)=\infty$ then
\begin{equation}\label{Finishing}
\delta(N(\mathcal A), N(\mathcal A-\lambda\mathcal B))\leq \frac{\sigma\vert \lambda\vert}{\gamma(\mathcal A)-\vert \lambda\vert(\sigma +\tau\gamma(\mathcal A))}.
\end{equation}
\end{theorem}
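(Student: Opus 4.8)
The plan is to bound the gap through its characterization \eqref{Characterize}: it suffices to produce, for each $u_0\in N(\mathcal A)$, an element $v\in N(\mathcal A-\lambda\mathcal B)$ with $\Vert u_0-v\Vert$ no larger than the right-hand side of \eqref{Finishing} times $\Vert u_0\Vert$, and then take the supremum over the unit sphere of $N(\mathcal A)$. Since the null space of $\mathcal A-\lambda\mathcal B$ consists of the $x$ with $\mathcal A(x)\cap\lambda\mathcal B(x)\ne\emptyset$ (see \eqref{Eigenvalue Equivalent}), I would look for $v$ as the limit of a Neumann-type series $v=\sum_{k\ge 0}\lambda^k u_k$ with $u_0$ the given vector. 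Matching powers of $\lambda$ in ``$(\mathcal A-\lambda\mathcal B)v=(\mathcal A-\lambda\mathcal B)(0)$'' leads to the relation analogue of the operator recursion $Au_k=Bu_{k-1}$, namely a sequence $u_k\in D(\mathcal A)$ together with witnesses $z_k$ satisfying $z_k\in\mathcal A(u_k)\cap\mathcal B(u_{k-1})$ for $k\ge 1$, with $z_0=0\in\mathcal A(0)=\mathcal A(u_0)$.

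The construction proceeds inductively. Starting from $u_0\in N_1=N(\mathcal A)$, one checks that $u_k\in N_{k+1}=\mathcal A^{-1}(\mathcal B(N_k))$, so the chain continues precisely as long as $\mathcal B(u_k)\cap R(\mathcal A)\ne\emptyset$; this is where $\nu(\mathcal A:\mathcal B)=\infty$ enters. By Lemma \ref{Equivalent} and the definition of $\nu$, the hypothesis forces $N_1\subset M_n$ for every $n$, equivalently $N_n\subset M_1=\mathcal B^{-1}(\mathcal A(X))$ for every $n$; since $\mathcal B(0)\subset\mathcal A(0)\subset R(\mathcal A)$, the coset $\mathcal B(u_k)$ then lies entirely in $R(\mathcal A)$ and the induction never stalls. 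For the quantitative control I would, at each step, first pick $z_k\in\mathcal B(u_{k-1})$ with $\Vert z_k\Vert\le(1+\varepsilon)\Vert\mathcal B(u_{k-1})\Vert$ (possible because $\mathrm{dist}(0,\mathcal B(u_{k-1}))=\Vert\mathcal B(u_{k-1})\Vert$, as $\mathcal B(0)$ is a subspace), and then, since $R(\mathcal A)$ is closed and hence $\gamma(\mathcal A)>0$ by Lemma \ref{Lamma-Closed Range}, choose $u_k\in\mathcal A^{-1}(z_k)$ of near-minimal norm in its $N(\mathcal A)$-coset so that $\Vert u_k\Vert\le\frac{1+\varepsilon}{\gamma(\mathcal A)}\Vert\mathcal A(u_k)\Vert$ by \eqref{lower bound 2}. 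Because $z_k\in\mathcal A(u_k)\cap\mathcal B(u_{k-1})$ and $\mathcal B(0)\subset\mathcal A(0)$, one has $\Vert\mathcal A(u_k)\Vert=\mathrm{dist}(z_k,\mathcal A(0))\le\mathrm{dist}(z_k,\mathcal B(0))=\Vert\mathcal B(u_{k-1})\Vert$, and \eqref{Norm-Again} bounds the latter by $\sigma\Vert u_{k-1}\Vert+\tau\Vert\mathcal A(u_{k-1})\Vert$.

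Writing $s_k=\Vert u_k\Vert$ and $a_k=\Vert\mathcal A(u_k)\Vert$ (with $a_0=0$), these estimates give $a_1\le\sigma s_0$ and, for $k\ge 2$, $a_k\le\sigma s_{k-1}+\tau a_{k-1}\le q\,a_{k-1}$ with $q=(\sigma+\tau\gamma(\mathcal A))/\gamma(\mathcal A)$, while $s_k\le a_k/\gamma(\mathcal A)$; letting $\varepsilon\to 0$ yields $s_k\le\frac{\sigma}{\gamma(\mathcal A)}q^{k-1}\Vert u_0\Vert$ and a companion geometric bound on $\Vert z_k\Vert$, both with ratio $q$. Hence for $\vert\lambda\vert q<1$, i.e. $\vert\lambda\vert<\gamma(\mathcal A)/(\sigma+\tau\gamma(\mathcal A))$, the series $\sum\lambda^k u_k$ and $\sum\lambda^k z_k$ converge and $\lambda^{N}z_{N}\to 0$. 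Setting $v_N=\sum_{k=0}^N\lambda^k u_k\to v$ and $w_N=\sum_{k=0}^N\lambda^k z_k\to w$, the telescoping identity $\sum_{k=0}^N\lambda^{k+1}z_{k+1}=w_N+\lambda^{N+1}z_{N+1}\to w$ shows that $w$ is simultaneously a limit of elements of $\mathcal A(v_N)$ and of $\lambda\mathcal B(v_N)$; the closedness of $\mathcal A$ and of $\lambda\mathcal B$ then gives $w\in\mathcal A(v)\cap\lambda\mathcal B(v)$, so $v\in N(\mathcal A-\lambda\mathcal B)$ by \eqref{Eigenvalue Equivalent}. Finally $\mathrm{dist}(u_0,N(\mathcal A-\lambda\mathcal B))\le\Vert u_0-v\Vert\le\sum_{k\ge 1}\vert\lambda\vert^k s_k\le\frac{\sigma}{\gamma(\mathcal A)}\Vert u_0\Vert\sum_{k\ge 1}\vert\lambda\vert^k q^{k-1}=\frac{\sigma\vert\lambda\vert\Vert u_0\Vert}{\gamma(\mathcal A)-\vert\lambda\vert(\sigma+\tau\gamma(\mathcal A))}$, which is exactly \eqref{Finishing}. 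I expect the main obstacle to be the bookkeeping of the norm control of the witnesses $z_k$: they must be chosen small enough that both series converge, yet compatibly with the membership $z_k\in\mathcal B(u_{k-1})\cap R(\mathcal A)$ that is guaranteed only through the chain $N_n\subset M_1$. Keeping the geometric estimates in step with the algebraic constraints forced by $\nu=\infty$, and justifying the passage to the limit inside the multivalued relations, are the delicate points.
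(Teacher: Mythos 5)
Your proposal is correct and follows essentially the same route as the paper: starting from an element of $N(\mathcal A)$, you build the chain through the spaces $N_k$ (which continues because $\nu(\mathcal A:\mathcal B)=\infty$ forces $N_k\subset M_1$), choose representatives of near-minimal norm to get the geometric bounds with ratio $(\sigma+\tau\gamma(\mathcal A))/\gamma(\mathcal A)$, sum the power series in $\lambda$, and pass to the limit by closedness to land in $N(\mathcal A-\lambda\mathcal B)$. The only cosmetic difference is that you track explicit witnesses $z_k\in\mathcal A(u_k)\cap\mathcal B(u_{k-1})$ and invoke closedness of $\mathcal A$ and $\lambda\mathcal B$ directly, where the paper phrases the same limit argument through the quotient maps $Q_{\mathcal A}$, $Q_{\mathcal B}$ and the closedness of $Q_{\mathcal A}\mathcal A$.
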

\begin{proof}
Let $N_k$ be as defined in \eqref{Inductively-2} and consider a sequence $z_k$ with the following properties:
\begin{eqnarray}\label{Sequence}
&& z_k\in N_k,\quad \mathcal A(z_{k+1})\cap \mathcal B(z_k)\ne \emptyset\\
&& \xi\Vert z_{k+1}\Vert \leq \Vert \mathcal A(z_{k+1})\Vert, \quad k=1, 2, \cdots,\nonumber
\end{eqnarray}
where $\xi$ is a positive constant. We show that for each $z\in N(\mathcal A)$ and $\xi <\gamma(\mathcal A)$, there is a sequence $z_k$ that satisfies \eqref{Sequence} such that $z=z_1$. We set $z=z_1$ and construct $z_k$ by induction. Suppose $z_1, z_2, \dots x_k$ have been constructed with properties \eqref{Sequence}. Since $z_k\in N_k\subset M_1=\mathcal B^{-1}(\mathcal A(X))$, there exists a $z_{k+1}\in D(\mathcal A)$ such that
$\mathcal A(z_{k+1})\cap \mathcal B(z_k)\ne \emptyset$. Since $\gamma(\mathcal A)\Vert \widetilde{z}_{k+1}\Vert \leq \Vert \mathcal A(z_{k+1})\Vert$ and $z_{k+1}$ can be replaced by any other element of $\widetilde{z}_{k+1}$ we can choose $z_{k+1}$ such that
$\xi \Vert z_{k+1}\Vert \leq \Vert \mathcal A(z_{k+1})\Vert$. Since $\mathcal A(z_{k+1})\cap \mathcal B(z_k)\ne \emptyset$, we see that
$z_{k+1}\in \mathcal A^{-1}(\mathcal B(N_n))=N_{k+1}$. This completes the induction process.

Since $\mathcal A(z_{k+1})\cap \mathcal B(z_k)\ne \emptyset$ and $\mathcal A(0) \supset \mathcal B(0)$, we see that
\begin{equation}\label{Derive}
\Vert \mathcal A(z_{k+1})\Vert \leq \Vert \mathcal B(z_k)\Vert\leq \sigma\Vert z_k\Vert +\tau\Vert A(z_k)\Vert.
\end{equation}
For $k=1$, \eqref{Derive} gives $\Vert \mathcal A(z_2)\Vert \leq \Vert \mathcal B(z_1)\Vert \leq \sigma\Vert z_1\Vert$ since $z_1\in N(\mathcal A)$.
For $k\ge 2$, \eqref{Sequence} implies that
\begin{eqnarray}\label{Bound-1}
\Vert \mathcal A(z_{k+1})\Vert &\leq& \Vert \mathcal B(z_k)\Vert \leq \sigma\Vert z_k\Vert +\tau\Vert \mathcal A(z_k)\Vert \leq (\sigma\xi^{-1}+\tau)\Vert \mathcal A(z_k)\Vert\\
&\leq& (\sigma\xi^{-1}+\tau)^2\Vert \mathcal A(z_{k-1})\Vert\leq \cdots \leq (\sigma\xi^{-1}+\tau)^{k-1}\Vert \mathcal A(z_2)\Vert \nonumber\\
&=& \xi^{-(k-1)}(\sigma +\xi\tau)^{k-1}\Vert \mathcal A(z_2)\Vert \leq \sigma\xi^{-(k-1)}(\sigma +\xi\tau)^{k-1}\Vert z_1\Vert. \nonumber
\end{eqnarray}
We also see from \eqref{Sequence} and \eqref{Bound-1} that
\begin{equation}\label{Bound-2}
\Vert z_{k+1}\Vert \leq \sigma\xi^{-k}(\sigma +\xi\tau)^{k-1}\Vert z_1\Vert, \quad k=1,2, \dots.
\end{equation}
The bounds in \eqref{Bound-1} and \eqref{Bound-2} imply that the series
$$
u(\lambda)=\sum_{k=1}^\infty \lambda^{k-1}z_k, \quad \lambda(\mathcal A)=\sum_{k=1}^\infty \lambda^kQ_{\mathcal A}\mathcal A(z_{k+1}), \; \; \lambda(\mathcal B)=\sum_{k=1}^\infty \lambda^{k-1}Q_{\mathcal B}\mathcal B(z_k)
$$
and
$$
\lambda(\mathcal B_{\mathcal A})=\sum_{k=1}^\infty \lambda^{k-1}Q_{\mathcal A}\mathcal B(z_k)
$$
are absolutely convergent for $\vert \lambda\vert <\frac{\xi}{\sigma +\xi\tau}$. The convergence of the last series follows from the fact that $\Vert Q_{\mathcal A}\mathcal B(z_k)\Vert \leq \Vert Q_{\mathcal B}\mathcal B(z_k)\Vert$ since $\mathcal B(0)\subset \mathcal A(0)$.

Let $u_n(\lambda)$, $\lambda_n(\mathcal A)$, $\lambda_n(\mathcal B)$ and $\lambda_n(\mathcal B_{\mathcal A})$ denote the sequences of the partial sums of the above series in that order. Then for each $n$, $u_n(\lambda)\in D(\mathcal)$ and $\lambda_n(\mathcal A)\in \widetilde{Y}:=Y\slash \mathcal A(0)$. Furthermore, $u_n(\lambda)\to u(\lambda)$ and $\lambda_n(\mathcal A)\to \lambda(\mathcal A)$. Since $Q_{\mathcal A}\mathcal A$ is closed by Lemma \ref{Equivalent-Closed} we see that $u(\lambda)\in D(Q_{\mathcal A}\mathcal A)=D(\mathcal A)$ and that
\begin{equation}\label{Equal-1}
Q_{\mathcal A}\mathcal A(u(\lambda))=\lambda(\mathcal A)=\sum_{k=1}^\infty \lambda^kQ_{\mathcal A}\mathcal A(z_{k+1}).
\end{equation}
Since $\mathcal A(z_{k+1})\cap \mathcal B(z_k)\ne \emptyset$, a similar argument shows that
\begin{equation}\label{Equal-2}
Q_{\mathcal A}\mathcal B(u(\lambda))=\lambda(\mathcal B_{\mathcal A})=\sum_{k=1}^\infty \lambda^{k-1}Q_{\mathcal A}\mathcal B(z_k) =
\sum_{k=1}^\infty \lambda^kQ_{\mathcal A}\mathcal A(z_{k+1})=\lambda(\mathcal A).
\end{equation}
One also obtains the equality $Q_{\mathcal B}\mathcal B(u(\lambda))=\lambda(\mathcal B)=\sum_{k=1}^\infty \lambda^kQ_{\mathcal B}\mathcal B(z_k)$ using the closedness of $\mathcal B$.

From \eqref{Equal-1} and \eqref{Equal-2} we see that
$$
Q_{\mathcal A}[\mathcal A(u(\lambda))-\lambda\mathcal B(u(\lambda))] = \widetilde{0}
$$
and so $u(\lambda)\in N(\mathcal A-\lambda\mathcal B)$.

Furthermore,
$$
\Vert u(\lambda)-z_1\Vert \leq \sum_{k=1}^\infty \vert \lambda\vert^{k-1}\Vert z_k\Vert \leq
\left( \frac{\sigma\vert \lambda\vert}{\xi-\vert \lambda\vert(\sigma +\tau\xi)} \right)\Vert z_1\Vert.
$$
Since there is such a $u(\lambda)\in N(\mathcal A-\lambda\mathcal B)$ for every $z-z_1\in N(\mathcal A)$, we conclude that
\begin{equation}\label{Conclude}
\delta(N(\mathcal A), N(\mathcal A-\lambda\mathcal B))\leq \frac{\sigma\vert \lambda\vert}{\gamma(\mathcal A)-\vert \lambda\vert(\sigma +\tau\gamma(\mathcal A))}.
\end{equation}
\end{proof}

We observe that if $\alpha(\mathcal A)<\infty$ then Theorem \ref{Complete} can be used to conclude that $\mathcal A-\lambda \mathcal B$ has closed range if $\vert \lambda\vert < \frac{\gamma(\mathcal A)}{\sigma +\tau\gamma(\mathcal A)}$. However, this conclusion is not possible if no restriction is imposed on $\alpha(\mathcal A)$. This case is considered in the next lemma.

\begin{lemma}\label{Closed range-A-B}
Let $\mathcal A$ and $\mathcal B$ be as in {\rm Theorem \ref{Stabilty in Lambda-1}} with $\nu(\mathcal A:\mathcal B)=\infty$. Then $\mathcal A-\lambda\mathcal B$ has closed range for $\vert \lambda\vert < \frac{\gamma(\mathcal A)}{3\sigma+\tau\gamma(\mathcal A)}$.
\end{lemma}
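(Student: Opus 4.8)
The plan is to reduce the assertion to a positive lower bound for $\mathcal A-\lambda\mathcal B$ and then feed in the gap estimate already furnished by Theorem \ref{Stabilty in Lambda-1}. We may assume $\lambda\neq 0$, since for $\lambda=0$ the range $R(\mathcal A)$ is closed by hypothesis. First I would observe that the admissible range $\vert\lambda\vert<\frac{\gamma(\mathcal A)}{3\sigma+\tau\gamma(\mathcal A)}$ is contained in $\vert\lambda\vert<\frac{\gamma(\mathcal A)}{\sigma+\tau\gamma(\mathcal A)}$, so that $\mathcal A-\lambda\mathcal B$ is closed throughout, as established in Theorem \ref{Complete} (whose closedness assertion does not rely on $\alpha(\mathcal A)$ being finite). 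By Lemma \ref{Lamma-Closed Range} a closed linear relation has closed range precisely when its lower bound is positive; hence it suffices to exhibit a constant $c>0$ with $\Vert(\mathcal A-\lambda\mathcal B)(x)\Vert\geq c\,{\rm dist}(x,N(\mathcal A-\lambda\mathcal B))$ for every $x\in D(\mathcal A)$, i.e.\ to show $\gamma(\mathcal A-\lambda\mathcal B)>0$.

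Next I would fix $x\in D(\mathcal A)$ and apply Lemma \ref{Extra} with $M=N(\mathcal A)$ and $N=N(\mathcal A-\lambda\mathcal B)$: for any $\varepsilon>0$ the coset of $x$ modulo $N(\mathcal A-\lambda\mathcal B)$ contains a representative $x_0$ with
$$
{\rm dist}(x_0,N(\mathcal A))\geq(1-\varepsilon)\left(\frac{1-\delta}{1+\delta}\right)\Vert x_0\Vert,
$$
where $\delta=\delta(N(\mathcal A),N(\mathcal A-\lambda\mathcal B))$. Since $x-x_0\in N(\mathcal A-\lambda\mathcal B)$ and $(\mathcal A-\lambda\mathcal B)(0)=\mathcal A(0)$ (using $\mathcal B(0)\subset\mathcal A(0)$), Theorem \ref{Image} gives $(\mathcal A-\lambda\mathcal B)(x)=(\mathcal A-\lambda\mathcal B)(x_0)+(\mathcal A-\lambda\mathcal B)(0)$, so that $\Vert(\mathcal A-\lambda\mathcal B)(x)\Vert=\Vert(\mathcal A-\lambda\mathcal B)(x_0)\Vert$ and ${\rm dist}(x,N(\mathcal A-\lambda\mathcal B))={\rm dist}(x_0,N(\mathcal A-\lambda\mathcal B))\leq\Vert x_0\Vert$. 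Thus it is enough to bound $\Vert(\mathcal A-\lambda\mathcal B)(x_0)\Vert$ below by a positive multiple of $\Vert x_0\Vert$.

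I would then chain the estimates. Lemma \ref{Norm Difference}$(b)$ (applicable because $\mathcal B(0)\subset\mathcal A(0)$), the growth hypothesis \eqref{Norm-Again}, and the lower-bound characterization \eqref{lower bound 2} of $\gamma(\mathcal A)$ yield
$$
\Vert(\mathcal A-\lambda\mathcal B)(x_0)\Vert\geq(1-\vert\lambda\vert\tau)\gamma(\mathcal A)\,{\rm dist}(x_0,N(\mathcal A))-\vert\lambda\vert\sigma\Vert x_0\Vert,
$$
where $1-\vert\lambda\vert\tau>0$ in the given range. Replacing ${\rm dist}(x_0,N(\mathcal A))$ by the efficient-representative bound above, and then inserting the gap estimate $\delta\leq\delta_0:=\frac{\sigma\vert\lambda\vert}{\gamma(\mathcal A)-\vert\lambda\vert(\sigma+\tau\gamma(\mathcal A))}$ of Theorem \ref{Stabilty in Lambda-1} (legitimate since $t\mapsto\frac{1-t}{1+t}$ is decreasing), produces a lower bound of the form $c_\varepsilon\Vert x_0\Vert$ with $c_\varepsilon\to(1-\vert\lambda\vert\tau)\gamma(\mathcal A)\frac{1-\delta_0}{1+\delta_0}-\vert\lambda\vert\sigma$ as $\varepsilon\to 0$.

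The real content is the algebraic collapse of this limiting constant. Substituting $\delta_0$, the numerator $1-\delta_0$ contributes $\gamma(\mathcal A)-\vert\lambda\vert(2\sigma+\tau\gamma(\mathcal A))$ while $1+\delta_0$ contributes $\gamma(\mathcal A)(1-\vert\lambda\vert\tau)$, and the prefactor $(1-\vert\lambda\vert\tau)\gamma(\mathcal A)$ cancels cleanly, leaving $\gamma(\mathcal A)-\vert\lambda\vert(3\sigma+\tau\gamma(\mathcal A))$. This is strictly positive exactly when $\vert\lambda\vert<\frac{\gamma(\mathcal A)}{3\sigma+\tau\gamma(\mathcal A)}$, so for such $\lambda$ one fixes $\varepsilon$ small enough to keep $c_\varepsilon>0$; combined with $\Vert x_0\Vert\geq{\rm dist}(x,N(\mathcal A-\lambda\mathcal B))$ this gives $\gamma(\mathcal A-\lambda\mathcal B)\geq c_\varepsilon>0$, and Lemma \ref{Lamma-Closed Range} concludes. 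The main obstacle I anticipate is bookkeeping rather than ideas: one must use the representative inequality in the right direction—so that ${\rm dist}(x_0,N(\mathcal A))$, carrying a positive coefficient, dominates the negative term $-\vert\lambda\vert\sigma\Vert x_0\Vert$—and then track the threefold appearance of $\sigma$ (one from the perturbation term via Lemma \ref{Norm Difference}, two more from $\delta_0$) that is responsible for the coefficient $3\sigma$ in the final threshold.
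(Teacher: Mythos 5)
Your proposal is correct and follows essentially the same route as the paper's proof: an efficient representative of the coset modulo $N(\mathcal A-\lambda\mathcal B)$ via Lemma \ref{Extra}, the chain of estimates through Lemma \ref{Norm Difference}, the growth hypothesis and $\gamma(\mathcal A)$, insertion of the gap bound from Theorem \ref{Stabilty in Lambda-1}, and the same algebraic collapse to $\gamma(\mathcal A)-(3\sigma+\tau\gamma(\mathcal A))\vert\lambda\vert>0$ followed by Lemma \ref{Lamma-Closed Range}. Your version is in fact slightly more carefully bookkept than the paper's (you make explicit the closedness of $\mathcal A-\lambda\mathcal B$ and the passage between the coset representative and ${\rm dist}(x,N(\mathcal A-\lambda\mathcal B))$), but there is no substantive difference.
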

\begin{proof}

In the present case, let $x\in X$ and set $y=x-u$ for any $u\in N(\mathcal A-\lambda\mathcal B)$.  Lemma \ref{Extra} implies that for any $\varepsilon>0$,
\begin{equation}\label{Imply}
\Vert \widetilde{y}\Vert ={\rm dist}\; (y,N(\mathcal A))\geq \frac{1-\delta(N(\mathcal A), N(\mathcal A-\lambda\mathcal B)}{1+\delta(N(\mathcal A), N(\mathcal A-\lambda\mathcal B}(1-\varepsilon)\Vert y\Vert.
\end{equation}
Suppose that $x\in D(\mathcal A)=D(\mathcal A-\lambda\mathcal B)$ and let $\delta:=\delta(N(\mathcal A):N(\mathcal A-\lambda\mathcal B))$. Since $(\mathcal A-\lambda\mathcal B)(u)=(\mathcal A-\lambda\mathcal B)(0)=\mathcal A(0)$, we see that
\begin{eqnarray}\label{Array-Norm}
\Vert (\mathcal A-\lambda\mathcal B)(x)\Vert &=& \Vert (\mathcal A-\lambda\mathcal B)(y)\Vert \geq \Vert \mathcal A(y)\Vert -\vert \lambda\vert\;\Vert \mathcal B(y)\Vert \; \; ({\rm by\; Lemma\; \ref{Norm Difference}}) \\
&\geq& \Vert \mathcal A(y)\Vert -\vert \lambda\vert(\sigma\Vert y\Vert+\tau\Vert \mathcal A(y)\Vert \nonumber\\
&=& (1-\tau\vert \lambda\vert) \Vert \mathcal A(y)\Vert - \sigma\vert \lambda\vert\; \Vert y\Vert \nonumber\\
&\geq & (1-\tau\vert \lambda\vert) \gamma(\mathcal A)\Vert \widetilde{y}\Vert - \sigma\vert \lambda\vert\; \Vert y\Vert \nonumber\\
&\geq& (1-\tau\vert \lambda\vert) \gamma(\mathcal A)\left(\frac{1-\delta}{1+\delta}\right)(1-\varepsilon)\Vert y\Vert  - \sigma\vert \lambda\vert\; \Vert y\Vert\; \; ({\rm by\; \eqref{Imply}}) \nonumber\\
&\geq& [\gamma(\mathcal A)-(2\sigma+\tau\gamma(\mathcal A))\vert\lambda\vert](1-\varepsilon)\Vert y\Vert-\sigma\vert \lambda\vert\; \Vert y\Vert\; \; ({\rm by\; \eqref{Finishing}}) \nonumber\\
&=&  [(\gamma(\mathcal A)-(2\sigma+\tau\gamma(\mathcal A))\vert\lambda\vert)(1-\varepsilon)-\sigma\vert \lambda\vert]\;\Vert y\Vert. \nonumber
\end{eqnarray}
Let $\widehat{X}$ denote the quotient space $X\slash N(\mathcal A-\lambda\mathcal B)$. Since $x-y=u\in N(\mathcal A-\lambda\mathcal B)$, we see that $\Vert y\Vert \geq \Vert \widehat{y}\Vert =\Vert \widehat{x}\Vert$ and therefore \eqref{Array-Norm} implies that
\begin{equation}\label{Go-to-zero}
\Vert (\mathcal A-\lambda\mathcal B)(x)\Vert \geq [(\gamma(\mathcal A)-(2\sigma+\tau\gamma(\mathcal A))\vert\lambda\vert)(1-\varepsilon)-\sigma\vert \lambda\vert ]\; \Vert \widehat{x}\Vert.
\end{equation}
Letting $\varepsilon \to 0$ in \eqref{Go-to-zero} leads to the inequality
\begin{equation}
\Vert (\mathcal A-\lambda\mathcal B)(x)\Vert \geq [(\gamma(\mathcal A)-(2\sigma+\tau\gamma(\mathcal A))\vert\lambda\vert)-\sigma\vert \lambda\vert ]\; \Vert \widehat{x}\Vert,
\end{equation}
from which we conclude that
$$
\gamma(\mathcal A-\lambda\mathcal B)\geq (\gamma(\mathcal A)-(3\sigma+\tau\gamma(\mathcal A))\vert\lambda\vert).
$$
It therefore follows that $\gamma(\mathcal A-\lambda\mathcal B)>0$ and therefore $R(\mathcal A-\lambda\mathcal B)$ is closed if
$\vert \lambda\vert < \frac{\gamma(\mathcal A)}{3\sigma+\tau\gamma(\mathcal A)}$.
\end{proof}
Finally, we establish the stability of both the nullity and deficiency of $\mathcal A-\lambda \mathcal B$ for $\lambda$ inside the disk $\vert \lambda\vert <\rho$ for some constant $\rho$.

\begin{theorem}\label{Stabilty in Lambda}
Let $X$ and $Y$ be Banach spaces and let $\mathcal A, \mathcal B\in CLR(X,Y)$ be such that $\mathcal A$ has closed range, $D(\mathcal B)\supset D(\mathcal A)$, $\mathcal B(0)\subset \mathcal T(0)$, and
\begin{equation}\label{Norm-Again-2}
\Vert \mathcal B(x)\Vert \leq \sigma \Vert x\Vert +\tau \Vert \mathcal A(x)\Vert \;\; {\rm for\; every}\; x\in D(\mathcal A),
\end{equation}
where $\sigma$ and $\tau$ are non negative constants. If $\nu(\mathcal A:\mathcal B)=\infty$ then $\alpha(\mathcal A-\lambda \mathcal B)$ and $\beta(\mathcal A-\lambda\mathcal B)$ are constants for all $\lambda$ for which $\vert \lambda \vert<\frac{\gamma(\mathcal A)}{3\sigma +\tau\gamma(\mathcal A)}$.
\end{theorem}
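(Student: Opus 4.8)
The plan is to establish, for every $\lambda$ with $|\lambda|<\rho:=\frac{\gamma(\mathcal A)}{3\sigma+\tau\gamma(\mathcal A)}$, the two equalities $\alpha(\mathcal A-\lambda\mathcal B)=\alpha(\mathcal A)$ and $\beta(\mathcal A-\lambda\mathcal B)=\beta(\mathcal A)$; since each value is then pinned to its value at $\lambda=0$, constancy on the whole disk is immediate. Throughout, Theorem \ref{Complete} (applied with the perturbing relation $-\lambda\mathcal B$, whose relative bounds are $|\lambda|\sigma$ and $|\lambda|\tau$) shows that $\mathcal A-\lambda\mathcal B$ is closed, and Lemma \ref{Closed range-A-B} shows that it has closed range precisely on $|\lambda|<\rho$; in particular $\gamma(\mathcal A-\lambda\mathcal B)>0$ there.

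First I would settle the nullity. Because $3\sigma+\tau\gamma(\mathcal A)\ge 2\sigma+\tau\gamma(\mathcal A)$, the disk $|\lambda|<\rho$ sits inside the region where the right-hand side of \eqref{Finishing} is strictly smaller than $1$, so Theorem \ref{Stabilty in Lambda-1} gives $\delta(N(\mathcal A),N(\mathcal A-\lambda\mathcal B))<1$ and Lemma \ref{Kato page 200} then yields $\alpha(\mathcal A)\le\alpha(\mathcal A-\lambda\mathcal B)$. For the opposite inequality I would apply Theorem \ref{Complete} to the perturbation $-\lambda\mathcal B$ (legitimate since $|\lambda|\sigma+|\lambda|\tau\gamma(\mathcal A)<\gamma(\mathcal A)$ on the disk): when $\alpha(\mathcal A)<\infty$ this gives $\alpha(\mathcal A-\lambda\mathcal B)\le\alpha(\mathcal A)$, and the two inequalities combine to the desired equality. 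When $\alpha(\mathcal A)=\infty$ the first inequality already forces $\alpha(\mathcal A-\lambda\mathcal B)=\infty$, so the nullity statement holds in every case.

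The deficiency I would reduce to the nullity statement by duality. Since $\gamma(\mathcal A)>0$ and $\gamma(\mathcal A-\lambda\mathcal B)>0$, Lemma \ref{Equality} gives $\beta(\mathcal A)=\alpha(\mathcal A')$ and $\beta(\mathcal A-\lambda\mathcal B)=\alpha\big((\mathcal A-\lambda\mathcal B)'\big)$, so it is enough to prove $\alpha\big((\mathcal A-\lambda\mathcal B)'\big)=\alpha(\mathcal A')$. For this I would show that $(\mathcal A-\lambda\mathcal B)'=\mathcal A'-\lambda\mathcal B'$ and that the pair $(\mathcal A',\mathcal B')$ again meets the hypotheses of the theorem: $\mathcal A'$ is closed with closed range because $\gamma(\mathcal A')=\gamma(\mathcal A)>0$ (Lemmas \ref{Norms} and \ref{Domains}), $\mathcal B'(0)\subset\mathcal A'(0)$ by Remark \ref{Contain-Prime}, and $\nu(\mathcal A':\mathcal B')=\nu(\mathcal A:\mathcal B)=\infty$ by Lemma \ref{Nu}. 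The nullity statement already proved, applied to $(\mathcal A',\mathcal B')$, then gives $\alpha(\mathcal A'-\lambda\mathcal B')=\alpha(\mathcal A')$, i.e.\ $\beta(\mathcal A-\lambda\mathcal B)=\beta(\mathcal A)$.

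The main obstacle is that this last reduction, taken literally, overreaches: the hypotheses permit $\mathcal B$ to be only relatively bounded rather than continuous and permit $D(\mathcal A)$ to be a proper subspace of $X$, whereas the adjoint-additivity rule Lemma \ref{Norms}(d) and the index identity Lemma \ref{Nu} are available only for a bounded $\mathcal B$ of full domain. I would remove the mismatch exactly as in the proof of Theorem \ref{Complete}: renorm $D(\mathcal A)$ by $\|x\|_{\breve D}:=(\sigma+\varepsilon)\|x\|+(\tau+\varepsilon)\|\mathcal A(x)\|$ to obtain a Banach space $\breve D$ (Lemma \ref{Banach Space}) on which the induced relations satisfy $D(\breve{\mathcal A})=D(\breve{\mathcal B})=\breve D$, $\|\breve{\mathcal B}\|\le 1$, and $\gamma(\breve{\mathcal A})>1$ for $\varepsilon$ small. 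In this setting Lemma \ref{Nu} does apply and gives $\nu(\breve{\mathcal A}':\breve{\mathcal B}')=\nu(\breve{\mathcal A}:\breve{\mathcal B})=\nu(\mathcal A:\mathcal B)=\infty$, the last equality holding because $\nu$ is built only from iterated images and preimages and so is unchanged by renorming; moreover $\breve{\mathcal B}'$ is bounded and $(\breve{\mathcal A}-\lambda\breve{\mathcal B})'=\breve{\mathcal A}'-\lambda\breve{\mathcal B}'$. As $\alpha$ and $\beta$ are untouched by the renorming, running the nullity argument for $(\breve{\mathcal A}',\breve{\mathcal B}')$ delivers $\beta(\mathcal A-\lambda\mathcal B)=\beta(\mathcal A)$. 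The one delicate point that remains is to check that the radius produced for the adjoint problem still covers $|\lambda|<\rho$, which requires tracking how $\gamma$, $\sigma$ and $\tau$ transform under both the renorming and the passage to the adjoint.
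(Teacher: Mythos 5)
Your proposal follows essentially the same route as the paper: the lower bound $\alpha(\mathcal A)\leq \alpha(\mathcal A-\lambda\mathcal B)$ via Theorem \ref{Stabilty in Lambda-1} and Lemma \ref{Kato page 200}, and the deficiency statement by passing to the renormed pair $\breve{\mathcal A},\breve{\mathcal B}$ of Theorem \ref{Complete}, taking adjoints, and invoking Remark \ref{Contain-Prime}, Lemma \ref{Nu} and Lemma \ref{Equality} -- including your diagnosis of why the renorming is needed before Lemma \ref{Norms}$(d)$ and Lemma \ref{Nu} can be used, which is exactly the paper's device. The one place you diverge is the upper bound $\alpha(\mathcal A-\lambda\mathcal B)\leq\alpha(\mathcal A)$: you obtain it from Theorem \ref{Complete}, which forces a case split on whether $\alpha(\mathcal A)$ is finite, whereas the paper gets it uniformly by noting that any $u\in N(\mathcal A-\lambda\mathcal B)$ satisfies $\mathcal A(u)\cap\lambda\mathcal B(u)\neq\emptyset$, so the estimate \eqref{Not Empty} bounds ${\rm dist}(u,N(\mathcal A))$ and yields $\delta(N(\mathcal A-\lambda\mathcal B),N(\mathcal A))<1$ directly, after which Lemma \ref{Kato page 200} applies with no finiteness hypothesis; your workaround is valid but the paper's is cleaner. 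Finally, the ``delicate point'' you flag -- verifying that the radius obtained for the adjoint pair $\breve{\mathcal A}',\breve{\mathcal B}'$ (namely $\gamma(\breve{\mathcal A}')/2\Vert\breve{\mathcal B}'\Vert$) actually covers the original disk $\vert\lambda\vert<\gamma(\mathcal A)/(3\sigma+\tau\gamma(\mathcal A))$ -- is a real piece of bookkeeping that the paper's proof also leaves implicit rather than carries out, so you have not missed an idea present in the paper; but to call either argument complete one would need to track how $\gamma(\breve{\mathcal A})$ and $\Vert\breve{\mathcal B}\Vert$ depend on $\sigma$, $\tau$ and $\varepsilon$ and confirm the inclusion of disks.
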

\begin{proof}
Let $u\in N(\mathcal A-\lambda\mathcal B)$. Then $\mathcal A(u)\cap \lambda\mathcal B(u)\ne \emptyset$ and we see from \eqref{Not Empty} that
$$
\Vert \widetilde{u}\Vert \leq \sigma\vert \lambda\vert \Vert u \Vert/(1-\vert \lambda\vert\tau)\gamma(\mathcal A).
$$
Since $\Vert \widetilde{u}\Vert = {\rm dist}\;(u, N(\mathcal A)$, we see from characterization \eqref{Characterize} that
$$
\delta(N(\mathcal A-\lambda\mathcal B), N(\mathcal A))\leq \frac{\sigma\vert \lambda\vert}{(1-\vert \lambda\vert\tau)\gamma(\mathcal A)}.
$$
Since $\frac{\sigma\vert \lambda\vert}{(1-\vert \lambda\vert\tau)\gamma(\mathcal A)}$ if
$\vert \lambda\vert < \frac{\gamma(\mathcal A)}{\sigma +\tau\gamma(\mathcal A)}$, Lemma \ref{Kato page 200} implies that
\begin{equation}\label{One direction}
\alpha(\mathcal A-\lambda\mathcal B) \leq \alpha(\mathcal A) \; \; {\rm for}\; \;  \vert \lambda\vert < \frac{\gamma(\mathcal A)}{\sigma +\tau\gamma(\mathcal A)}.
\end{equation}

The reverse inequality follows from Theorem \ref{Stabilty in Lambda-1} by noting that the righthand side of \eqref{Finishing} is less than one if $\vert \lambda\vert <\frac{\gamma(\mathcal A)}{2\sigma +\tau\gamma(\mathcal A)}$. We therefore conclude by Lemma \ref{Kato page 200} that $\alpha(\mathcal A)\leq \alpha(\mathcal A-\lambda\mathcal B)$ if $\vert \lambda\vert <\frac{\gamma(\mathcal A)}{2\sigma +\tau\gamma(\mathcal A)}$. Combined with \eqref{One direction} we conclude that
\begin{equation}\label{Conclusion}
\alpha(\mathcal A)=\alpha(\mathcal A-\lambda\mathcal B)\; {\rm for}\;  \vert \lambda\vert <\frac{\gamma(\mathcal A)}{2\sigma +\tau\gamma(\mathcal A)}.
\end{equation}

To show that $\beta(\mathcal A-\lambda\mathcal)=\beta(\mathcal A)$, we make use of the linear relations $\breve{\mathcal A}$ and $\breve{\mathcal B}$ as defined in the proof of Theorem \ref{Complete}. Since $\breve{\mathcal A}$ is bounded, Lemmas \ref{Norms}$(c)$, \ref{Domains} $(d)$, and \ref{Lamma-Closed Range} imply that $R(\breve{\mathcal A}^\prime)$ has closed range. Since $\breve{\mathcal B}(0)^\prime \subset \breve{\mathcal A}(0)^\prime$ by Remark \ref{Contain-Prime} and  $\nu(\breve{\mathcal A}^\prime: \breve{\mathcal B}^\prime)=\infty$ by Lemma \ref{Nu}, all the assumptions of Theorem \ref{Stabilty in Lambda} are satisfied by the pair $\breve{\mathcal A}^\prime$ and $\breve{\mathcal B}^\prime$. Since $\Vert {\breve{\mathcal B}}^\prime\Vert = \Vert {\breve{\mathcal B}}\Vert <1$ by Lemmas \ref{Norms} $(a)$ and \ref{Domains} $(c)$,  it follows from \eqref{Conclusion} that
\begin{equation}\label{Breve}
\alpha({\breve{\mathcal A}}^\prime - \lambda{\breve{\mathcal B}}^\prime)=\alpha({\breve{\mathcal A}}^\prime)\;
{\rm for}\; \vert \lambda\vert <\frac{\gamma({\breve{\mathcal A}}^\prime)}{2\Vert {\breve{\mathcal B}}^\prime\Vert}.
\end{equation}
Since
$(\breve{\mathcal A}-\lambda\breve{\mathcal B})^\prime= ({\breve{\mathcal A}}^\prime-\lambda{\breve{\mathcal B}}^\prime)$ by Lemma \ref{Norms} $(b)$ and $(d)$ and $(\breve{\mathcal A}-\lambda\breve{\mathcal B})$ has closed range (since $\mathcal A-\lambda \mathcal B $ has closed range), it follows from \eqref{Same}, Lemma \ref{Equality}  and \eqref{Breve} that
$$
\beta(\mathcal A-\lambda \mathcal B)=\beta(\breve{\mathcal A}-\lambda\breve{\mathcal B})=\alpha({\breve{\mathcal A}}^\prime - \lambda{\breve{\mathcal B}}^\prime)=\alpha({\breve{\mathcal A}}^\prime)=\beta(\breve{\mathcal B})=\beta(\mathcal B).
$$
\end{proof}
Theorem \ref{Stabilty in Lambda} remains true if we replace the requirement $\nu(\mathcal A:\mathcal B)=\infty$ with $\mathcal B^{-1}(0)\subset \mathcal A^{-1}(0)$.

\bibliographystyle{plain}
\bibliography{Wanjala_Luliro}
\end{document}